\renewcommand{\vec}{\mathbf}
\newcommand{\lvertiii}{\left\vert\kern-0.25ex\left\vert\kern-0.25ex\left\vert}
\newcommand{\rvertiii}{\right\vert\kern-0.25ex\right\vert\kern-0.25ex\right\vert}
\newtheorem{theorem}{Theorem}[section]
\newtheorem{lemma}[theorem]{Lemma}
\newtheorem{proposition}[theorem]{Proposition}
\newtheorem{corollary}[theorem]{Corollary}
\theoremstyle{definition}
\newtheorem{definition}[theorem]{Definition}
\theoremstyle{remark}
\newtheorem{remark}[theorem]{Remark}
\DeclareMathOperator{\ran}{ran}
\DeclareMathOperator{\id}{Id}
\newcommand*\dd{\mathop{}\!\mathrm{d}}
\newcommand{\ii}{\ensuremath{\mathrm{i}}}
\newcommand{\ee}{\ensuremath{\,\mathrm{e}}}
\title{On the convergence of Fourier spectral methods involving non-compact operators}
\author{Thomas Trogdon}
\address{Department of Applied Mathematics, University of Washington, WA}
\email{trogdon@uw.edu}
\urladdr{http://faculty.washington.edu/trogdon/}
\begin{document}

\begin{abstract}
    Motivated by Fredholm theory, we develop a framework to establish the convergence of spectral methods for operator equations $\mathcal L u = f$.  The framework posits the existence of a left-Fredholm regulator for $\mathcal L$ and the existence of a sufficiently good approximation of this regulator. Importantly, the numerical method itself need not make use of this extra approximant.  We apply the framework to Fourier finite-section and collocation-based numerical methods for solving differential equations with periodic boundary conditions and to solving Riemann--Hilbert problems on the unit circle.  We also obtain improved results concerning the approximation of eigenvalues of differential operators with periodic coefficients.
\end{abstract}

\maketitle

\section{Introduction}

The classical numerical analysis fact that the combination of consistency and stability imply convergence is a powerful one.  Consistency plays a key role in developing methods and is usually straightforward to establish. Stability, on the other hand, is often quite difficult.  When developing numerical methods for linear operator equations, establishing stability requires either a general theory or a detailed analysis of the linear system that results.  For example, when solving
\begin{align} \label{eq:bvp}
\begin{cases}
    u''(x) = f(x),\\
    u(0) = \alpha,\\
    u(1) = \beta,
\end{cases}
\end{align}
with a centered finite difference, the linear system $\mathcal L_N u_N = f_N$ that results can be fully analyzed: one uses Chebyshev polynomials to compute the eigenvalues of the discretized operator explicitly \cite{leveque}.  On the other hand, for second-kind Fredholm integral equations of the form
\begin{align} \label{eq:K}
    u(x) - \int_0^1 K(x,y) u(y) \dd y= f(x),
\end{align}
collocation methods using Gauss-Legendre nodes and weights fit in an abstract framework \cite{atkinson,Kress1993,Kress2014} because the operator
\begin{align*}
    u \mapsto \int_0^1 K(x,y) u(y) \dd y,
\end{align*}
is compact for smooth kernel functions $K$. But what is one to do if the problem at hand does not fit into either category?  One wants to consider both (i) problems general enough where one does not have an explicit handle on the spectrum (and eigenvector matrix condition number) of the discretization and (ii) problems that do not come from the discretization of a compact perturbation of the identity.  In this paper, we propose a framework to help continue to fill this void.  The main idea we present here is that if one can find an appropriate preconditioner, not necessarily to use for numerical purposes, convergence follows.  Informally, if one aims to solve an operator equation via a finite-dimensional approximation
\begin{align*}
    \mathcal L u =f  \quad \Longrightarrow \quad \mathcal L_N u_N = f_N,
\end{align*}
and one can identify an operator $\mathcal N$ and a finite-dimensional approximation $\mathcal N_N$ such that
\begin{align*}
    \mathcal N \mathcal L = \id - \mathcal K, \quad \mathcal N_N \mathcal L_N = \id - \mathcal K_N, \quad \|\mathcal K_N - \mathcal K\| \to 0,
\end{align*}
then convergence follows, $u_N \to u$.  The two extremes of employing this framework are:
\begin{itemize}
    \item In \eqref{eq:bvp}, let $\mathcal N$ be the Green's function solution operator and let $\mathcal N_N$ be the inverse of $\mathcal L_N$.  Then $\mathcal K, \mathcal K_N = 0$.
    \item In \eqref{eq:K}, let $\mathcal N, \mathcal N_N = \id$, $\mathcal K u(x) = \int_0^1 K(x,y) u(y)$ and $\mathcal K_N$ is its direct discretization.
\end{itemize}
But many of the operator equations one runs into in practice fit somewhere between these two idealized settings, and the proposed framework addresses some of these cases.

We illustrate the ideas with two core examples: solving differential equations with periodic boundary conditions and solving Riemann--Hilbert problems on the circle. In the former case, we include a significant detour to demonstrate that this idea can also be used to show convergence of eigenvalues.  Thus, we obtain some improvements on the results in \cite{Curtis,Zumbrun} for the convergence of Hill's method \cite{hill} for both self-adjoint and non-self-adjoint operators.  Indeed, it is the left preconditioning step in \cite{Zumbrun} (see also \cite{Krasnoselskii1972} and \cite{Olver2013} for a right preconditioning step) where the authors regularize their problem to find a 2-modified Fredholm determinant --- the Evans function --- that motivated this work. 

The primary goal of this work is not to present the convergence theory for numerical methods that have not yet been shown to converge, but rather to unify, and hopefully simplify, the theory for two classes of operator equations that typically require a very different treatment (see \cite{Krasnoselskii1972} for differential operators and \cite{prossdorf} for singular integral operators).

We should also point out the idea that once one has a method to solve an operator equation, one may have a method to approximate eigenvalues (and maybe eigenvectors too) is far from new.  There is a lengthy text on this matter \cite{Chatelin2011} and this is the approach pursued in \cite{Krasnoselskii1972}.  The work here could surely be put into these frameworks, but since we obtain spectrum estimates rather directly, we do not attempt this here.  Furthermore, many modern methods can use a convergent discretization as input to compute spectral quantities \cite{Colbrook2021}.

The two examples we present are based on Fourier (Laurent) series.  Some of the calculations we present are eased through the elementary nature of such series (following \cite{Kress1993}).  Specifically, the classical projection and interpolation operators are easily analyzed on Sobolev spaces.  Further, the differential and singular integral operators we consider act simply on the Fourier basis and hence on these Sobolev spaces. Yet, it is only when we consider the relation of the Fourier interpolation projection to the orthogonal projection do we need to examine the precise form of the linear system that results.  It is because of this that we believe these ideas will be of use far beyond the setting of Fourier series, and this is an area of ongoing research.

\subsection{Notation}
Before we proceed, we lay out some notation:
\begin{itemize}
    \item $\mathbb U = \{ z \in \mathbb C : |z| = 1\}$ with counter-clockwise orientation.
    \item $\mathbb T = [0, 2\pi)$ with $0$, $2\pi$ identified.
    \item Scalar-valued functions of a continuous variable are denoted by lower-case Latin letters, e.g., $u$.
    \item Vectors with scalar entries are denoted using lower-case bold Latin letters, e.g., $\vec u$.  And typically, $\vec u$ will be understood to be the expansion coefficients of $u$ in some basis that is clear from context.  The entries in the vector $\vec u$ will be referred to by $u_j$.
    \item If a vector $\vec u_N$ has a subscript, then we will use $u_{N,j}$ to refer to its entries.
    \item We will use $\|\cdot \|_s$ to denote an $H^s$ Sobolev norm.  Then $\|\cdot\|_{s \to t}$ will be used to denote the induced operator norm from $H^s$ to $H^t$.  If $s = t$ we will simply use $\|\cdot\|_s$ to denote both the norm and induced operator norm.
    \item $\ran \mathcal L$ will be used to denote the range of an operator $\mathcal L$ where the domain for $\mathcal L$ is implicit.
    \item $\id$ is used to denote the identity operator.
\end{itemize}

\subsection{Overview of main results}

The main developments begin with Definition~\ref{def:main} where an admissible operator is defined.  Then the main abstract result is Theorem~\ref{t:main} that uses an additional hypothesis that the Fredholm regulator has an appropriate finite-dimensional approximation.  This result and its extension in Theorem~\ref{t:IN-PN} give sufficient conditions for the convergence of a numerical method for operator equations that encompass the classical settings discussed above.

In Section~\ref{sec:period}, we discuss Sobolev spaces on which our operator equations will be posed.  We also discuss the Fourier orthogonal projection and interpolation operators.  Both of our main applications will use these spaces.  The use of the orthogonal projection gives what is often known as the finite-section method.  The use of the interpolation operators gives collocation methods.

In Section~\ref{sec:diffop}, we discuss our first application of the framework to the solution of differential equations on periodic spaces of functions.   The operators we consider are assumed to have their highest-order terms have constant coefficients:
\begin{align}\label{eq:diff_op_simple}
    \mathcal L u (\theta) = \sum_{j=q}^k c_j \frac{\dd^j u}{\dd \theta^j}(\theta) +\sum_{j=0}^{p} a_j(\theta)\frac{\dd^j u}{\dd \theta^j}(\theta), \quad p < k.
\end{align}
See \eqref{eq:diff_op} for more detail.  The first  result for this operator is Theorem~\ref{t:diff-conv} which gives sufficient conditions for the convergence of the so-called finite-section method at an optimal rate \eqref{eq:diff-conv}.  It is noted that the same arguments hold if one instead uses the interpolation projection.  Thus, convergence follows for collocation methods.  And if the coefficients are infinitely smooth, convergence occurs spectrally --- faster than any polynomial rate:
\begin{align*}
    \|u - u_N\|_s \leq C_{s,t} N^{s-t},
\end{align*}
for any $t >  s$.

We then use this framework to analyze the convergence of the spectrum of operators of the form \eqref{eq:diff_op_simple} (see also \eqref{eq:diff_op}).  The main motivation here is to the application of our results to the Fourier--Floquet--Hill method \cite{hill}.  The general result is given in Theorem~\ref{t:eval-gen} which establishes, in the general setting, that : (i) For any $\epsilon > 0$, all eigenvalues of the finite-dimensional approximation eventually enter the $\epsilon$-pseudospectrum\footnote{Note a particular definition of the pseudospectrum, see \eqref{eq:pseudo}, is used where the operator is mapping between two different spaces.  As a result, this does not imply uniform approximation of the spectrum.  } of the infinite-dimensional operator. And (ii) every eigenvalue of the infinite-dimensional operator is the limit of eigenvalues of the finite-dimensional approximations.  A common way to discuss such convergence is to state that it gives convergence in the Attouch-Wetts metric \cite{Colbrook2022}.

If more is known about the operator in question, rates of convergence may be determined. 
 So, in Section~\ref{sec:sa}, we present significant improvements of these results for self-adjoint operators. Specifically, we establish a general result (Theorem~\ref{t:sa-mult}) on the preservation of multiplicities.  Surely, these techniques could also be used to show the convergence of the eigenfunctions, but this is not an issue we take up here. Then Theorem~\ref{t:sa-mult} gives a result on the convergence rate of the approximations of the eigenvalues.  When these results are applied to the operator in \eqref{eq:diff_op_simple} (see also \eqref{eq:diff_op}), in the self-adjoint setting, we arrive at Theorem~\ref{t:spectrum-main} which gives the spectral convergence of the eigenvalues for smooth coefficient functions:
\begin{align*}
    |\lambda - \lambda_N| \leq C_t(\lambda) N^{-t}.
\end{align*}

In Section~\ref{sec:rhp}, we shift focus to Riemann--Hilbert problems on the unit circle: Given $g: \mathbb U \to \mathbb C$, find $\phi: \mathbb C \setminus \mathbb U \to \mathbb C$, analytic on its domain of definition, such that its boundary values $\phi^\pm$ on $\mathbb U$ satisfy
\begin{align}\label{eq:jump}
    \phi^{+}(z) = \phi^-(z) g(z), \quad \phi(\infty) = 1.
\end{align}
Generally speaking, Riemann--Hilbert problems are boundary-value problems in the complex plane for sectionally analytic functions.  We treat, arguably, the most elementary form of such a problem that is not without its complications.  Indeed, the complication stems from the fact that the operator one considers is a non-compact perturbation of the identity, complicating proofs of convergence.  Our proof leans on an important result of Gohberg and Feldman (see Theorem~\ref{t:G-F}).  The numerical method for this problem that we analyze is of particular interest because of its wide applicability to much more general Riemann--Hilbert problems, see \cite{Ballew2023,TrogdonSOBook,SORHFramework,SOHilbertTransform}, for example.  The main result establishes that
\begin{align*}
    \|u - u_N\|_s \leq C_{s,t} N^{s-t},
\end{align*}
for any $t >  2s$.  We conjecture that the requirement of $2s$ is suboptimal, and could, potentially, be replaced with $s$ if Theorem~\ref{t:G-F} is extended to Sobolev spaces.  Yet, the convergence rate is optimal.

\subsection{Relation to previous work}

The preconditioning, left-Fredholm regulator ideas that underpin Definition~\ref{def:main} were motivated by \cite{Zumbrun}.  We note that a right preconditioning step appears in \cite[Theorem 15.4]{Krasnoselskii1972} for a relatively compact perturbation of an unbounded operator and in \cite{Olver2013} for a Petrov--Galerkin ultraspherical polynomial method.  The hypotheses of \cite[Theorem 15.4]{Krasnoselskii1972} also include a commuting assumption that is similar to one we make about the projection operators.  This theorem,  \cite[Theorem 15.4]{Krasnoselskii1972}, applies in our setting, giving similar results as in our Theorem~\ref{t:diff-conv} for differential operators, but it does not appear to apply to the Riemann--Hilbert problems considered in Section~\ref{sec:rhp} without further modifications.

As noted above, in \cite{Zumbrun}, the authors use a 2-modified Fredholm determinant to show convergence to eigenvalues of the operator \eqref{eq:diff_op}.  Rates are not obtained in this paper, so our results are an improvement in this respect.  But the results in \cite{Zumbrun} apply to systems of differential equations and allow for the leading term to have a non-constant coefficient, provided that multiplication by this function is symmetric, positive definite.  It remains an interesting question to extend the current work to this setting, and obtain rates. The extension of the current work to systems of equations is straightforward.

We also point to the comprehensive articles \cite{Hansen2008,Ben-Artzi2015a,Ben-Artzi2015} for a survey of computational aspects of spectral theory. Hansen \cite{Hansen2008} gives an overview of what is known for the approximation of spectra of primarily self-adjoint and normal operators on Hilbert space, going beyond operators with pure point spectrum, see, for example, \cite[Theorem 15]{Hansen2008}. We use the classical square truncations of operators approximations in this work.  But we point out that rectangular truncations are also of use in spectral compuations, and can give better guarantees \cite{Colbrook2019a}.

In the self-adjoint case, our results are an improvement upon those in \cite{Curtis} because (1) we obtain spectral convergence for all eigenvalues, not just extremal ones and (2) we allow operators of a more general form.  On the other hand, \cite{Curtis} establishes the convergence of eigenfunctions, something we do not exactly address.  Although, we do show, in a restricted sense, the convergence of eigenprojections within the proof of Theorem~\ref{t:eval-gen}.  Many of these results can be deduced from \cite{Krasnoselskii1972}, but we present different, direct proofs of these facts in a framework that then applies to the operators in Section~\ref{sec:rhp}. 

There has also been a flurry of work putting spectral computations within the Solvability Complexity Index (SCI) hierarchy \cite{Colbrook2022,Hansen2010,Colbrook2019}.  The questions asked and answered in these works, while closely related to the current, have a different focus.  The SCI hierarchy is primarily focused on the existence or non-existence of algorithms with a given complexity, measured by the number of limits required to compute a quantity.  Often, the finite-section method, very similar to what we discuss here, is used to produce an algorithm ---- giving existence.  The convergence rate of such an algorithm may be of secondary importance in these works, but it is certainly not ignored \cite{Colbrook2021,Colbrook2021a}. In the current work, convergence rates are a main focus.

Regarding the results in Section~\ref{sec:rhp}, similar results can be found in the work of Pr\"ossdorf and Silberman \cite[Chapters 8 and 9]{prossdorf} for H\"older spaces, instead of Sobolev spaces.  The problem we pose is equivalent to the classical Weiner--Hopf factorization of a symbol \cite{Bottcher1997}. As stated, it has an explicit solution --- one possible reason for a relative lack of literature on the convergence theory for methods.  Nevertheless, the wide applicability (again, see, \cite{Ballew2023,TrogdonSOBook,SORHFramework,SOHilbertTransform}) of the ideas beg for the simplest case to be well understood in a framework that generalizes.  Furthermore, the methods we propose appear to apply in the case where $g$ in \eqref{eq:jump} is matrix-valued, as is $\phi$.  There is no longer, in general, an explicit solution to this Riemann--Hilbert problem.

Many other results exist in the literature for the convergence of numerical methods for weakly singular and strongly singular integral equations.  The classic paper of Kress and Sloan \cite{Kress1993} presents a problem that resembles our setting in that they consider an operator of the form $\mathcal L + \mathcal A$ where the discretization of $\mathcal L$ can be directly checked to be invertible and $\mathcal A$ is compact.  Such an operator would likely fit within the framework we propose here (see also \cite{Kress1995}). 

\section{The classical framework}
We consider an operator equation
\begin{align}\label{eq:op-eqn}
    \mathcal L u = f,
\end{align}
where $\mathcal L$ is a bounded linear operator $\mathcal L: \mathbb V \to \mathbb W$ and $\mathbb V, \mathbb W$ are separable Banach spaces with $f \in \mathbb W$.  
Suppose we have access to two finite-dimensional projection operators $\mathcal P_N^{\mathbb V}$, $\mathcal P_N^{\mathbb W}$ such that
\begin{align*}
    \|P_N^{\mathbb V} v - v\|_{\mathbb V} \to 0, \quad \|P_N^{\mathbb W} w - w\|_{\mathbb W} \to 0,
\end{align*}
as $N \to \infty$ for any fixed $v \in \mathbb V, w \in \mathbb W$.   Note that even in the case $\mathbb W = \mathbb V$ we may have that $\mathcal P_N^{\mathbb V} \neq \mathcal P_N^{\mathbb W}$. 

We then replace \eqref{eq:op-eqn} with
\begin{align}\label{eq:disc-eqn}
    \mathcal P_N^{\mathbb W} \mathcal L u_N = \mathcal P_N^{\mathbb W} f, \quad u_N \in \ran \mathcal P_N^{\mathbb V}.
\end{align}
This equation may or may not be solvable, and even if it is, one may not have $u_N \to u$ in any meaningful sense.  We first explain classical conditions that one can impose to guarantee $u_N \to u$ in $\mathbb V$.

\subsection{The classical framework}

The classical approach to understanding the approximation of solutions of operator equations is to consider operator equations of the form
\begin{align}
    (\id - \mathcal K) u = f,
\end{align}
where $\mathcal K$ is a \emph{compact} bounded linear operator $\mathcal K: \mathbb V \to \mathbb W$ and $f \in \mathbb W$.  
The first important result is the following lemma \cite{atkinson}.

\begin{lemma}
Suppose that 
\begin{align*}
\|\mathcal P_N^{\mathbb W} u - u \|_{\mathbb W} \overset{N \to\infty}{ \to } 0 ,
\end{align*}
for any $u \in \mathbb W$.  Then if $\mathcal K : \mathbb V \to \mathbb W$ is compact
\begin{align*}
    \|\mathcal P_N^{\mathbb W} \mathcal K - \mathcal K\|_{\mathbb V \to \mathbb W} \overset{N \to\infty}{ \to } 0 .
\end{align*}
\end{lemma}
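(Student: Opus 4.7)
The plan is to exploit the fact that $\mathcal{K}$ being compact means $\mathcal{K}(B)$ is relatively compact in $\mathbb{W}$ for $B \subset \mathbb{V}$ the closed unit ball, and to combine this with the pointwise convergence of $\mathcal{P}_N^{\mathbb{W}}$ by a standard $\epsilon/3$-type argument on a finite cover.

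First, I would establish uniform boundedness of the projections: $M \defeq \sup_N \|\mathcal{P}_N^{\mathbb{W}}\|_{\mathbb{W}} < \infty$. This follows from the Banach--Steinhaus (uniform boundedness) theorem applied to the family $\{\mathcal{P}_N^{\mathbb{W}}\}_N$, since pointwise convergence on the Banach space $\mathbb{W}$ in particular implies pointwise boundedness. With this, $\id - \mathcal{P}_N^{\mathbb{W}}$ has norm at most $1 + M$ uniformly in $N$.

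Next, fix $\epsilon > 0$. Since $\mathcal{K}$ is compact, the set $\mathcal{K}(B) \subset \mathbb{W}$ is totally bounded, so there exist finitely many points $w_1, \dots, w_m \in \mathbb{W}$ such that for every $v \in B$ we can find $i = i(v)$ with
\begin{equation*}
    \|\mathcal{K} v - w_i\|_{\mathbb{W}} < \frac{\epsilon}{2(1+M)}.
\end{equation*}
By hypothesis there exists $N_0$ such that for all $N \geq N_0$ and all $i \in \{1,\dots,m\}$ we have $\|\mathcal{P}_N^{\mathbb{W}} w_i - w_i\|_{\mathbb{W}} < \epsilon/2$. Then for any $v \in B$ and $N \geq N_0$, inserting $\pm w_i$ and applying the triangle inequality gives
\begin{equation*}
    \|\mathcal{P}_N^{\mathbb{W}} \mathcal{K} v - \mathcal{K} v\|_{\mathbb{W}}
    \leq \|(\id - \mathcal{P}_N^{\mathbb{W}})(\mathcal{K} v - w_i)\|_{\mathbb{W}} + \|\mathcal{P}_N^{\mathbb{W}} w_i - w_i\|_{\mathbb{W}}
    < (1+M) \cdot \frac{\epsilon}{2(1+M)} + \frac{\epsilon}{2} = \epsilon.
\end{equation*}
Taking the supremum over $v \in B$ gives $\|\mathcal{P}_N^{\mathbb{W}} \mathcal{K} - \mathcal{K}\|_{\mathbb{V} \to \mathbb{W}} \leq \epsilon$ for $N \geq N_0$, which is the claim.

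There is no real obstacle here; the only non-trivial ingredient is the uniform-boundedness step, which is essential because without it we could not absorb the error $\mathcal{K} v - w_i$ uniformly in $N$. Everything else is the standard total-boundedness-plus-finite-cover argument that converts pointwise to uniform convergence on compact sets.
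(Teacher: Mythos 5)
Your proof is correct. The paper does not actually supply a proof of this lemma (it simply cites Atkinson), and your argument---uniform boundedness of $\{\mathcal P_N^{\mathbb W}\}$ via Banach--Steinhaus, total boundedness of $\mathcal K(B)$, finite $\epsilon$-net, and a triangle-inequality splitting---is precisely the standard proof found in that reference.
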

To more simply describe the classical setting, we now set $ \mathcal P_N^{\mathbb W} =  \mathcal P_N^{\mathbb V} =  \mathcal P_N$, $\mathbb W = \mathbb V$.    With the lemma in hand, the key approximation result can be established \cite{atkinson}.  We include the proof because the arguments will be modified below.
\begin{theorem}\label{t:classic-approx}
Suppose that $\mathcal K : \mathbb V \to \mathbb V$ is compact, $\id - \mathcal K$ is invertible, and that
\begin{align*}
    \| \mathcal P_N v - v\|_{\mathbb V} \overset{N \to \infty}{\to} 0,
\end{align*}
for every fixed $v \in \mathbb V$.  Then there exists $N_0$ such
\begin{align*}
     \|\mathcal P_N \mathcal K - \mathcal K\|_{\mathbb V} < \frac{1}{\|(\id - \mathcal K)^{-1}\|_{\mathbb V}}, \quad N > N_0,
\end{align*}
and therefore $\id - \mathcal P_N \mathcal K$ is invertible for all $N > N_0$.  Furthermore, there exists constants $c < C$, independent of $N$, such that
\begin{align*}
    c\| \mathcal P_N u -u \|_{\mathbb V} \leq \| u_N -u \|_{\mathbb V} \leq C \| \mathcal P_N u -u \|_{\mathbb V}, \quad u = (\id - \mathcal K)^{-1} f, \quad N > N_0,
\end{align*}
where $u_N$ is the unique solution of \eqref{eq:disc-eqn} and
\begin{align*}
    C \leq   \max_{N > N_0} \frac{\| (\id - \mathcal K)^{-1}\|_{\mathbb V}}{1 - \| (\id - \mathcal K)^{-1}\|_{\mathbb V}\|\mathcal P_{N} K - \mathcal K\|_{\mathbb V}}.
\end{align*}
\end{theorem}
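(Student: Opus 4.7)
The plan is to reduce everything to the preceding norm-convergence lemma combined with a Neumann-series perturbation. Since $\mathcal K$ is compact and $\mathcal P_N \to \id$ strongly, the lemma (with $\mathbb W = \mathbb V$) gives $\|\mathcal P_N \mathcal K - \mathcal K\|_{\mathbb V} \to 0$. Because $(\id - \mathcal K)^{-1}$ is a fixed bounded operator, the stated strict inequality holds for all $N > N_0$ once $N_0$ is chosen large enough. Writing
\begin{equation*}
\id - \mathcal P_N \mathcal K = (\id - \mathcal K) - (\mathcal P_N \mathcal K - \mathcal K) = (\id - \mathcal K)\bigl[\id - (\id - \mathcal K)^{-1}(\mathcal P_N \mathcal K - \mathcal K)\bigr],
\end{equation*}
the bracketed factor is invertible by a geometric (Neumann) series because the norm of the perturbation is strictly less than $1$, and hence so is $\id - \mathcal P_N \mathcal K$. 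The same Neumann estimate yields
\begin{equation*}
\|(\id - \mathcal P_N \mathcal K)^{-1}\|_{\mathbb V} \leq \frac{\|(\id - \mathcal K)^{-1}\|_{\mathbb V}}{1 - \|(\id - \mathcal K)^{-1}\|_{\mathbb V}\|\mathcal P_N \mathcal K - \mathcal K\|_{\mathbb V}},
\end{equation*}
which is the proposed upper bound on $C$ after taking a maximum over $N > N_0$.

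Next, I would derive a clean identity for the error. Since $u_N \in \ran \mathcal P_N$, we have $\mathcal P_N u_N = u_N$, and the discrete equation becomes $(\id - \mathcal P_N \mathcal K)u_N = \mathcal P_N f$. Applying $\mathcal P_N$ to the true equation $(\id - \mathcal K)u = f$ rewrites the right-hand side as $\mathcal P_N u - \mathcal P_N \mathcal K u$, while trivially $(\id - \mathcal P_N \mathcal K)u = u - \mathcal P_N \mathcal K u$. Subtracting these last two expressions gives
\begin{equation*}
(\id - \mathcal P_N \mathcal K)(u_N - u) = \mathcal P_N u - u.
\end{equation*}
Taking norms and applying the bound from the first paragraph produces the upper estimate $\|u_N - u\|_{\mathbb V} \leq C\|\mathcal P_N u - u\|_{\mathbb V}$. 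Reading the identity in the other direction as $\mathcal P_N u - u = (\id - \mathcal P_N \mathcal K)(u_N - u)$ yields the lower estimate with $c = (\sup_{N > N_0}\|\id - \mathcal P_N \mathcal K\|_{\mathbb V})^{-1}$; this supremum is finite because $\|\mathcal P_N \mathcal K\|_{\mathbb V}$ converges, hence is bounded, in $N$.

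I do not anticipate any genuine obstacle; the argument is standard and essentially bookkeeping. The only subtle observation is that $u_N = \mathcal P_N u_N$ promotes $\mathcal P_N(\id - \mathcal K)u_N = \mathcal P_N f$ to the stronger $(\id - \mathcal P_N \mathcal K)u_N = \mathcal P_N f$, which is what makes the error identity close cleanly. Structurally, the proof uses only (i) norm convergence of $\mathcal P_N \mathcal K$ to $\mathcal K$ and (ii) invertibility of $\id - \mathcal K$, and this is the template that will be mimicked in the forthcoming abstract framework, where $\mathcal K$ is replaced by a suitable approximate left-Fredholm regulator rather than a compact operator.
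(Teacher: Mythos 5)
Your proof is correct and follows essentially the same route as the paper: the same Neumann-series factorization of $\id - \mathcal P_N \mathcal K$ through $(\id - \mathcal K)^{-1}$, the same error identity $(\id - \mathcal P_N \mathcal K)(u - u_N) = u - \mathcal P_N u$ derived by applying $\mathcal P_N$ to the true equation and using $\mathcal P_N u_N = u_N$, and the same resulting bounds. The only (minor) addition is your explicit formula $c = (\sup_{N>N_0}\|\id - \mathcal P_N\mathcal K\|_{\mathbb V})^{-1}$ for the lower constant, which the paper leaves implicit.
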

\begin{proof}
Since $(\id - \mathcal K)$ is invertible, we consider the operator
\begin{align*}
   ( \id - \mathcal K )^{-1}  (\id - \mathcal P_N \mathcal K) & =  ( \id - \mathcal K )^{-1} ( \id - \mathcal K + (\mathcal K-  \mathcal P_N \mathcal K)\\
   & = \id - ( \id - \mathcal K )^{-1}(\mathcal P_N \mathcal K -\mathcal K).
\end{align*}
Then there exists $N_0$ such that if $N > 0$ then
\begin{align*}
    \|\mathcal P_N \mathcal K - \mathcal K\|_{\mathbb V} < \frac{1}{\|(\id - \mathcal K)^{-1}\|_{\mathbb V}},
\end{align*}
in which case it follows that $\id - \mathcal P_N \mathcal K$ is invertible for $N > N_0$ and
\begin{align*}
    \|(\id - \mathcal P_N \mathcal K)^{-1}\|_{\mathbb V} \leq  \frac{\|(\id - \mathcal K)^{-1}\|_{\mathbb V}}{1 - \|( \id - \mathcal K )^{-1}\|_{\mathbb V} \|\mathcal P_N \mathcal K -\mathcal K\|_{\mathbb V}}.
\end{align*}
Then we may consider
\begin{align*}
    (\id - \mathcal K) u &= f,\\
    (\mathcal P_N - \mathcal P_N \mathcal K )u &= \mathcal P_N f,\\
    (\id - \mathcal P_N \mathcal K)u &= u - \mathcal P_N u + \mathcal P_N f,\\
    (\id - \mathcal P_N \mathcal K)u &= u - \mathcal P_N u + u_N - \mathcal P_N \mathcal K u_N,\\
    (\id - \mathcal P_N \mathcal K) (u - u_N) &= u - \mathcal P_N u.
\end{align*}    
This implies that there exists constants $c < C$ such that if $N > N_0$ then 
\begin{align*}
    c\| \mathcal P_N u -u \|_{\mathbb V} \leq \| u_N -u \|_{\mathbb V} \leq C \| \mathcal P_N u -u \|_{\mathbb V}.
\end{align*}
\end{proof}
While this is an elegant result, its usefulness is limited by the fact that many operators one needs to consider are not compact perturbations of the identity.  Also, interpolation projection operators often do not converge strongly to the identity.  They only converge on a dense subspace.

\section{Extending the classical framework -- left-Fredholm operators}

We begin with a definition that we use as motivation.
\begin{definition}
A bounded linear operator $\mathcal L: \mathbb V \to \mathbb W$ is said to be left Fredholm if there exists another bounded linear operator $\mathcal N: \mathbb W \to \mathbb V$ such that
\begin{align*}
    \mathcal N \mathcal L = \id - \mathcal K,
\end{align*}
where $\mathcal K: \mathbb V \to \mathbb V$ is compact.  The operator $\mathcal N$ is said to be a left Fredholm regulator for $\mathcal L$.
\end{definition}
Fredholm operators can be thought of as, in a sense, compactifiable operators.  This may appear to be trivial as all operators we will consider are left Fredholm --- operators with left inverses are left Fredholm.  Yet this concept motivates important properties we look for that allow us to prove convergence for the discretizations of operator equations.

\begin{remark}
    From a computational perspective, identifying a sufficiently simple Fredholm regulator for an unbounded operator can give a principled preconditioner for iterative methods, as is commonly required for the efficiency of classical finite difference and finite element methods.
\end{remark}

\begin{definition}\label{def:main}
A bounded linear operator $\mathcal L: \mathbb V \to \mathbb W$ is admissible if
\begin{itemize}
    \item $ \mathcal L = \mathcal L_0 + \mathcal L_1$, where $\mathcal L_0 \mathcal P_N^{\mathbb V} = \mathcal P_N^{\mathbb W} \mathcal L_0$,
    \item $\mathcal L$ is left Fredholm with regulator $\mathcal N$,
    \item $\mathcal N = \mathcal N_0 + \mathcal N_1$, where $\mathcal N_0 \mathcal P_N^{\mathbb W} = \mathcal P_N^{\mathbb V} \mathcal N_0$.
\end{itemize} 
\end{definition}
The importance of a relation such as $\mathcal L = \mathcal L_0 + \mathcal L_1$, where $\mathcal L_0 \mathcal P_N^{\mathbb V} = \mathcal P_N^{\mathbb W} \mathcal L_0$, is that if one wants to solve
\begin{align}\label{eq:1}
    \mathcal P_N^{\mathbb W} \mathcal L u_N = \mathcal P_N^{\mathbb W} f, \quad u_N \in \ran \mathcal P_N^{\mathbb V},
\end{align}
then it is equivalent to consider
\begin{align}\label{eq:2}
    ( \mathcal L_0 + \mathcal P_N^{\mathbb W} \mathcal L_1) u_N = \mathcal P_N^{\mathbb W} f, \quad u_N \in \ran \mathcal P_N^{\mathbb V},
\end{align}
Indeed if $u_N$ solves \eqref{eq:1} then because $\mathcal P_N^{\mathbb V}u_N = u_N$ we find that $u_N$ solves \eqref{eq:2}.  Similarly, if $u_N$ solves \eqref{eq:2} then, again using $\mathcal P_N^{\mathbb V}u_N = u_N$ we have that $u_N$ solves \eqref{eq:1}.

\begin{remark}
    The restriction $\mathcal N_0 \mathcal P_N^{\mathbb W} = \mathcal P_N^{\mathbb V} \mathcal N_0$ may seem significant.  This type of assumption can be found in a related form in \cite{Krasnoselskii1972,Kress1993} and other works.  Replacing it with an approximate condition $\mathcal N_0 \mathcal P_N^{\mathbb W} \approx \mathcal P_N^{\mathbb V} \mathcal N_0$ is an area of ongoing research. The convergence result in \cite{Olver2013}, for example, after some modifications, appears to fit into this framework.
\end{remark}

The following result is now immediate.  While fairly elementary, its usefulness is demonstrated via the applications that follow. It is applied to situations where $\mathcal L_1$ is a non-compact operator and the classical framework fails to apply.

\begin{theorem}\label{t:main}
Let $\mathcal L = \mathcal L_0 + \mathcal L_1$ be admissible and invertible with invertible left-Fredholm regulator $\mathcal N = \mathcal N_0 + \mathcal N_1$.  Suppose there exists operators $\mathcal N_{1,N}: \ran \mathcal P_N^{\mathbb W} \to \ran \mathcal P_N^{\mathbb V}$ and $\mathcal K_N :\mathbb V \to \mathbb V$ such that
\begin{align}\label{eq:comp}
    (\mathcal N_0 + \mathcal N_{1,N})(\mathcal L_0 + \mathcal P_N^{\mathbb W}\mathcal L_1) = \id - \mathcal K_N, \quad \text{on} \quad \ran \mathcal P_N^{\mathbb V},
\end{align}
where
\begin{align*}
    \| \mathcal K_N - \mathcal K \|_{\mathbb V} \overset{N \to \infty}{\to} 0, \quad \mathcal N \mathcal L = \id - \mathcal K, \quad \mathcal K \text{ compact.}
\end{align*}
Then if $\dim \ran \mathcal P_N^{\mathbb V} = \dim \ran \mathcal P_N^{\mathbb W}$, there exists $N_0 > 0$ such that for $N > N_0$, $\mathcal L_0 + \mathcal P_N^{\mathbb W}\mathcal L_1: \ran \mathcal P_N^{\mathbb V} \to \ran \mathcal P_N^{\mathbb W}$ is invertible and for $f \in \mathbb W$
\begin{align*}
    (\id - \mathcal K_N) (u - u_N) &= (\mathcal N_0 + \mathcal N_{1,N})\mathcal L_0 (u - \mathcal P_N^{\mathbb V}u),\\
    u = \mathcal L^{-1} f, \quad u_N &= (\mathcal L_0 + \mathcal P_N^{\mathbb W}\mathcal L_1)^{-1} \mathcal P_{N}^{\mathbb W} f.
\end{align*}
Therefore
\begin{align*}
    \| u - u_N\|_{\mathbb V} \leq C_N \| u - \mathcal P_N^{\mathbb V} u\|_{\mathbb V},
\end{align*}
where
\begin{align*}
    C_N = \frac{\|(\id - \mathcal K)^{-1}\|_{\mathbb V}}{1 - \|(\id - \mathcal K)^{-1}\|_{\mathbb V} \|\mathcal K - \mathcal K_N\|_{\mathbb V}}\|(\mathcal N_0 + \mathcal N_{1,N})\mathcal L_0\|_{\mathbb V}.
\end{align*}
\end{theorem}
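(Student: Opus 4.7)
The plan is to mimic the structure of the proof of Theorem~\ref{t:classic-approx}, with the factorization \eqref{eq:comp} playing the role there played by the identity $( \id - \mathcal K )^{-1}(\id - \mathcal P_N\mathcal K)$. I would split the argument into three stages: inverting $\id - \mathcal K_N$ on $\mathbb V$ for large $N$, deducing invertibility of the discrete operator $\mathcal L_0 + \mathcal P_N^{\mathbb W}\mathcal L_1$, and then extracting the residual identity for $u - u_N$. For the first stage, since $\mathcal N$ and $\mathcal L$ are invertible, $\mathcal N\mathcal L = \id - \mathcal K$ is invertible on $\mathbb V$, and $\|\mathcal K - \mathcal K_N\|_{\mathbb V}\to 0$ permits the same Neumann expansion used in Theorem~\ref{t:classic-approx}, applied to $\id - \mathcal K_N = (\id - \mathcal K) - (\mathcal K_N - \mathcal K)$, to produce an $N_0$ for which $\id - \mathcal K_N$ is invertible on $\mathbb V$ with precisely the operator-norm bound that appears in $C_N$.

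For the second stage, the commutation $\mathcal L_0\mathcal P_N^{\mathbb V} = \mathcal P_N^{\mathbb W}\mathcal L_0$ sends $\mathcal L_0 + \mathcal P_N^{\mathbb W}\mathcal L_1$ from $\ran\mathcal P_N^{\mathbb V}$ into $\ran\mathcal P_N^{\mathbb W}$, while $\mathcal N_0\mathcal P_N^{\mathbb W} = \mathcal P_N^{\mathbb V}\mathcal N_0$ together with the prescribed range of $\mathcal N_{1,N}$ sends $\mathcal N_0 + \mathcal N_{1,N}$ from $\ran\mathcal P_N^{\mathbb W}$ into $\ran\mathcal P_N^{\mathbb V}$. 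Hence \eqref{eq:comp} exhibits $\id - \mathcal K_N$ as a bijection on the finite-dimensional space $\ran\mathcal P_N^{\mathbb V}$, so $\mathcal L_0 + \mathcal P_N^{\mathbb W}\mathcal L_1$ is at least injective there; the assumed equality $\dim\ran\mathcal P_N^{\mathbb V} = \dim\ran\mathcal P_N^{\mathbb W}$ then upgrades injectivity to a bijection onto $\ran\mathcal P_N^{\mathbb W}$.

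For the third stage, I would apply $\mathcal P_N^{\mathbb W}$ to $\mathcal L u = f$ and use $\mathcal P_N^{\mathbb W}\mathcal L_0 = \mathcal L_0\mathcal P_N^{\mathbb V}$ to rewrite
\[
\mathcal P_N^{\mathbb W} f \;=\; (\mathcal L_0 + \mathcal P_N^{\mathbb W}\mathcal L_1)\,u \;-\; \mathcal L_0(u - \mathcal P_N^{\mathbb V}u).
\]
Subtracting the discrete equation $(\mathcal L_0 + \mathcal P_N^{\mathbb W}\mathcal L_1)u_N = \mathcal P_N^{\mathbb W} f$ yields $(\mathcal L_0 + \mathcal P_N^{\mathbb W}\mathcal L_1)(u - u_N) = \mathcal L_0(u - \mathcal P_N^{\mathbb V}u)$, and left-composing with $\mathcal N_0 + \mathcal N_{1,N}$ while invoking \eqref{eq:comp} produces the stated error identity; taking norms and combining with the Neumann bound from Stage~1 delivers the estimate with the declared $C_N$.

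The main point of care, and the only place I expect to need to spell out conventions, is this last left-composition: \eqref{eq:comp} is postulated only on $\ran\mathcal P_N^{\mathbb V}$, whereas $u - u_N$ does not lie in that subspace. I would make the factorization valid on all of $\mathbb V$ by extending $\mathcal N_{1,N}$ to $\mathbb W$ via pre-composition with $\mathcal P_N^{\mathbb W}$, a convention that is already implicit in the quantity $\|(\mathcal N_0 + \mathcal N_{1,N})\mathcal L_0\|_{\mathbb V}$ appearing in the definition of $C_N$. With this extension, the right-hand side is insensitive to the choice because $\mathcal N_{1,N}\mathcal L_0(u - \mathcal P_N^{\mathbb V}u) = \mathcal N_{1,N}\mathcal P_N^{\mathbb W}(\id - \mathcal P_N^{\mathbb W})\mathcal L_0 u = 0$, so the identity printed in the theorem is recovered unchanged.
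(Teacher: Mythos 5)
Your proposal matches the paper's proof in structure and content: the Neumann-series inversion of $\id - \mathcal K_N$ as a perturbation of $\id - \mathcal K$, the dimension-counting argument for invertibility of the discrete operator, and the telescoping chain that produces $(\mathcal L_0 + \mathcal P_N^{\mathbb W}\mathcal L_1)(u - u_N) = \mathcal L_0(u - \mathcal P_N^{\mathbb V} u)$ before left-composing with $\mathcal N_0 + \mathcal N_{1,N}$ are all exactly what the paper does. The one point where you go beyond the paper is in flagging that \eqref{eq:comp} is stated only on $\ran\mathcal P_N^{\mathbb V}$ while $u - u_N$ lives in all of $\mathbb V$; the paper's proof silently composes anyway. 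Your convention (extend $\mathcal N_{1,N}$ by precomposition with $\mathcal P_N^{\mathbb W}$, so that $\mathcal N_{1,N}\mathcal L_0(u - \mathcal P_N^{\mathbb V}u) = 0$ via $\mathcal P_N^{\mathbb W}(\id-\mathcal P_N^{\mathbb W}) = 0$) is the right one and correctly recovers the printed right-hand side. Be aware, though, that this alone handles the right-hand side; identifying the left-hand side with $(\id - \mathcal K_N)(u - u_N)$ still requires the extended factorization $(\mathcal N_0 + \mathcal N_{1,N}\mathcal P_N^{\mathbb W})(\mathcal L_0 + \mathcal P_N^{\mathbb W}\mathcal L_1) = \id - \mathcal K_N$ to hold on all of $\mathbb V$ (equivalently, that $\mathcal K_N$ is defined on the complement of $\ran\mathcal P_N^{\mathbb V}$ precisely by that product), which is an implicit strengthening of the hypotheses shared by both your argument and the paper's. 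In the paper's two applications this is verified by construction, so in practice there is no issue; it would just be cleaner to say explicitly that $\mathcal K_N$ is taken to be $\id - (\mathcal N_0 + \mathcal N_{1,N}\mathcal P_N^{\mathbb W})(\mathcal L_0 + \mathcal P_N^{\mathbb W}\mathcal L_1)$ on all of $\mathbb V$.
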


\begin{proof}
The assumptions imply that $\id - \mathcal K_N: \mathbb V \to \mathbb V$ is invertible for sufficiently large $N$, say $N > N_0$.  And therefore, it is invertible as an operator on $\ran \mathcal P_N^{\mathbb V}$. Indeed, since $\ran \mathcal P_N^{\mathbb V}$ is an invariant subspace for $\id - \mathcal K_N$, if it failed to be invertible on this subspace, then it must have a non-trivial nullspace.  That would imply the same for $\id - \mathcal K_N$. This implies both operators in \eqref{eq:comp} are invertible as finite-dimensional operators between $\ran \mathcal P_N^{\mathbb V}$ and $\ran \mathcal P_N^{\mathbb W}$.  This establishes the first claim.  Then consider

\begin{align*}
    (\mathcal L_0 + \mathcal L_1) u &= f,\\
    (\mathcal P_N^{\mathbb W} \mathcal L_0 + \mathcal P_N^{\mathbb W}\mathcal L_1) u &= \mathcal P_N^{\mathbb W} f,\\
    (\mathcal L_0 + \mathcal P_N^{\mathbb W}\mathcal L_1) u &= \mathcal L_0 (u - \mathcal P_N^{\mathbb V}u) + \mathcal P_N^{\mathbb W} f,\\
   (\mathcal L_0 + \mathcal P_N^{\mathbb W}\mathcal L_1) u &= \mathcal L_0 (u - \mathcal P_N^{\mathbb V}u) + \mathcal L_0 u_N + \mathcal P_N^{\mathbb W} \mathcal L_1 u_N,\\
    (\mathcal L_0 + \mathcal P_N^{\mathbb W}\mathcal L_1) (u - u_N) &= \mathcal L_0 (u - \mathcal P_N^{\mathbb V}u),\\
    (\id - \mathcal K_N) (u - u_N) &= (\mathcal N_0 + \mathcal N_{1,N})\mathcal L_0 (u - \mathcal P_N^{\mathbb V}u).
\end{align*}
Therefore
\begin{align*}
    \|u - u_N\|_{\mathbb V} \leq \|(\id - \mathcal K_N)^{-1}\|_{\mathbb V} \|(\mathcal N_0 + \mathcal N_{1,N})\mathcal L_0\|_{\mathbb V} \|u - \mathcal P_N^{\mathbb V}u\|_{\mathbb V}.
\end{align*}
And the final claim follows from bounding $\|(\id - \mathcal K_N)^{-1}\|_{\mathbb V}$ in terms of $\|(\id - \mathcal K)^{-1}\|_{\mathbb V}$.
\end{proof}
\begin{remark}
A different estimate, obtained from Theorem~\ref{t:main}, is found using
\begin{align}\label{eq:other}
    C_N \leq \| (\mathcal L_0 + \mathcal P^{\mathbb W}_N \mathcal L_1)^{-1} \mathcal N^{-1} (\mathcal N_0 + \mathcal N_{1,N})\mathcal L_0\|_{\mathbb V \to \mathbb V}.
\end{align}
This can be useful if something is known \emph{a priori} about $\mathcal L_0 + \mathcal P^{\mathbb W}_N \mathcal L_1$.
\end{remark}

We will also need to understand what occurs when we use a different projection operator $\mathcal I_N^{\mathbb W}$, instead of $\mathcal P_N^{\mathbb W}$.  We will suppose that 
\begin{align*}
    \|u - \mathcal I_N^{\mathbb W}u\|_{\mathbb W} \overset{N \to \infty}{\to} 0,
\end{align*}
for every $u \in \mathbb W'$ where $\mathbb W'$ is a dense subspace of $\mathbb W$.   Also, we assume that $\mathcal I_N^{\mathbb W} \mathbb W' = \ran \mathcal P_N^{\mathbb W}$.

\begin{theorem}\label{t:IN-PN}
Under the hypotheses of Theorem~\ref{t:main}, suppose, for sufficiently large $N$, that $\mathcal L_0 + \mathcal I_N^{\mathbb W} \mathcal L_1 : \ran \mathcal P_N^{\mathbb V} \to \ran \mathcal P_N^{\mathbb W}$ is invertible. Then if $f \in \mathbb W'$,
\begin{align*}
\|(\mathcal L_0 + \mathcal P_N^{\mathbb W} \mathcal L_1)^{-1}  \mathcal P_N^{\mathbb W} f &- (\mathcal L_0 + \mathcal I_N^{\mathbb W} \mathcal L_1)^{-1}  \mathcal I_N^{\mathbb W} f \|_{\mathbb V} \\
&\leq \|(\mathcal L_0 + \mathcal I_N^{\mathbb W}\mathcal L_1)^{-1}\|_{\ran \mathcal I_N^{\mathbb W} \to \ran \mathcal P_N^{\mathbb V}} \|(\mathcal I_N^{\mathbb W} - \mathcal P_N^{\mathbb W})( f  - \mathcal L_1 u_N)\|_{\mathbb W}.
\end{align*}
\end{theorem}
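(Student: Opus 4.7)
The plan is a direct algebraic manipulation of the two discrete equations, followed by applying the assumed invertibility. Write $u_N = (\mathcal L_0 + \mathcal P_N^{\mathbb W} \mathcal L_1)^{-1} \mathcal P_N^{\mathbb W} f$ and $\tilde u_N = (\mathcal L_0 + \mathcal I_N^{\mathbb W} \mathcal L_1)^{-1} \mathcal I_N^{\mathbb W} f$. Both live in $\ran \mathcal P_N^{\mathbb V}$ and they satisfy
\begin{align*}
    (\mathcal L_0 + \mathcal P_N^{\mathbb W}\mathcal L_1) u_N &= \mathcal P_N^{\mathbb W} f,\\
    (\mathcal L_0 + \mathcal I_N^{\mathbb W}\mathcal L_1) \tilde u_N &= \mathcal I_N^{\mathbb W} f.
\end{align*}

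Next, I would rewrite the first equation so that it uses the interpolation projection $\mathcal I_N^{\mathbb W}$ in place of $\mathcal P_N^{\mathbb W}$. Adding and subtracting $\mathcal I_N^{\mathbb W} \mathcal L_1 u_N$ on the left and $\mathcal I_N^{\mathbb W} f$ on the right yields
\begin{align*}
    (\mathcal L_0 + \mathcal I_N^{\mathbb W}\mathcal L_1) u_N = \mathcal I_N^{\mathbb W} f - (\mathcal I_N^{\mathbb W} - \mathcal P_N^{\mathbb W})(f - \mathcal L_1 u_N).
\end{align*}
Subtracting the equation for $\tilde u_N$ then gives
\begin{align*}
    (\mathcal L_0 + \mathcal I_N^{\mathbb W}\mathcal L_1)(u_N - \tilde u_N) = -(\mathcal I_N^{\mathbb W} - \mathcal P_N^{\mathbb W})(f - \mathcal L_1 u_N).
\end{align*}

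Now invert $\mathcal L_0 + \mathcal I_N^{\mathbb W} \mathcal L_1$ using the hypothesis. To apply the stated operator norm $\|(\mathcal L_0 + \mathcal I_N^{\mathbb W}\mathcal L_1)^{-1}\|_{\ran \mathcal I_N^{\mathbb W} \to \ran \mathcal P_N^{\mathbb V}}$, one must check that the right-hand side really lies in $\ran \mathcal I_N^{\mathbb W}$. This is where the hypothesis $\mathcal I_N^{\mathbb W} \mathbb W' = \ran \mathcal P_N^{\mathbb W}$ enters: it forces the inclusion $\ran \mathcal P_N^{\mathbb W} \subseteq \ran \mathcal I_N^{\mathbb W}$, and since $\mathcal I_N^{\mathbb W}$ is a projection the term $\mathcal I_N^{\mathbb W} w - \mathcal P_N^{\mathbb W} w$ is automatically in $\ran \mathcal I_N^{\mathbb W}$ for any $w$ in its domain. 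We also need $f \in \mathbb W'$ (given) and $\mathcal L_1 u_N \in \mathbb W'$ so that $\mathcal I_N^{\mathbb W}$ can be applied; here $u_N$ is a finite-dimensional expansion in $\ran \mathcal P_N^{\mathbb V}$ and $\mathcal L_1$ is bounded, so in the intended applications $\mathcal L_1 u_N$ lies in the smooth dense subspace $\mathbb W'$. Taking norms of both sides of the identity above finishes the proof.

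The only mildly subtle point --- and the one I would highlight in the write-up --- is the verification that $(\mathcal I_N^{\mathbb W} - \mathcal P_N^{\mathbb W})(f - \mathcal L_1 u_N)$ is in the source space $\ran \mathcal I_N^{\mathbb W}$ of the inverse operator whose norm appears on the right; everything else is bookkeeping. No compactness or asymptotic information is used in this argument: the result is purely a perturbation statement conditional on invertibility of both finite-dimensional operators.
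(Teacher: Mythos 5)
Your proof is correct and mirrors the paper's argument exactly: both rewrite the $\mathcal P_N^{\mathbb W}$-equation in terms of $\mathcal I_N^{\mathbb W}$, subtract, and invert $\mathcal L_0 + \mathcal I_N^{\mathbb W}\mathcal L_1$. Your added remarks on why the right-hand side lies in $\ran\mathcal I_N^{\mathbb W}$ and why $\mathcal L_1 u_N$ is in the domain of $\mathcal I_N^{\mathbb W}$ are sound bookkeeping the paper leaves implicit.
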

\begin{proof}
Consider the two equations
\begin{align*}
    ( \mathcal L_0 + \mathcal I_N^{\mathbb W} \mathcal L_1) \tilde u_N &= \mathcal I_N^{\mathbb W} f,\\
    ( \mathcal L_0 + \mathcal P_N^{\mathbb W} \mathcal L_1) u_N &= \mathcal P_N^{\mathbb W} f,\\
    ( \mathcal L_0 + \mathcal I_N^{\mathbb W} \mathcal L_1) u_N &= \mathcal (\mathcal I_N^{\mathbb W} - \mathcal P_N^{\mathbb W}) \mathcal L_1 u_N + \mathcal P_N^{\mathbb W} f.
\end{align*}
Then
\begin{align*}
    \mathcal L_0( \tilde u_N - u_N) + \mathcal I_N^{\mathbb W} \mathcal L_1 ( \tilde u_N - u_N) &= (\mathcal P_N^{\mathbb W} - \mathcal I_N^{\mathbb W})  \mathcal L_1 u_N  + (\mathcal I_N^{\mathbb W} - \mathcal P_N^{\mathbb W}) f.
\end{align*}
The claim follows.
\end{proof}

\section{Spaces of periodic functions}\label{sec:period}

We consider the standard inner product on $L^2(\mathbb T)$, where $\mathbb T = [0,2\pi)$ is the one-dimensional torus:
\begin{align*}
    \langle f, g \rangle = \frac{1}{2\pi}\int_0^{2\pi} f(\theta) \overline{g(\theta)} \dd \theta.
\end{align*}
We use $e_j(\theta) = \ee^{\ii j \theta}$, $j = 0, \pm1,\pm2,\ldots$, to denote an orthonormal basis for $L^2(\mathbb T).$  We also consider functions defined on $\mathbb U = \{ z \in \mathbb C : |z| = 1\}$.  Here the inner product is
\begin{align*}
    \langle f, g \rangle = \frac{1}{2\pi}\int_0^{2\pi} f(\ee^{\ii\theta}) \overline{g(\ee^{\ii\theta})} \dd \theta,
\end{align*}
giving rise to $L^2(\mathbb U).$

From this, we associate any $u \in L^2(\mathbb T)$ (and hence $L^2(\mathbb U)$) with a bi-infinite vector
\begin{align*}
    u \mapsto \vec u =  (\langle u, e_j \rangle)_{j=-\infty}^\infty = \begin{bmatrix} \vdots \\ u_{-2} \\ u_{-1} \\ \hline u_0 \\ u_{1} \\ \vdots \end{bmatrix}.
\end{align*}
We block the vector so that one can easily keep track of the ``zero mode" which lies just below the horizontal line.  From this we can define the standard Sobolev spaces
\begin{align*}
    H^s(\mathbb T) = \left\{ u \in L^2( \mathbb T) : c_s \sum_{j=-\infty}^\infty |u_j|^2 (\max\{1,|j|\})^{2s} < \infty \right\}, \quad s \geq 0,
\end{align*}
with the obvious norm $\|u\|_{H^s(\mathbb T)}^2 = \|u\|_s^2 = \|\vec u\|_s^2 = c_{|s|}\sum_{j=-\infty}^\infty |u_j|^2 (\max\{1,|j|\})^{2s}$.  We assume $c_{|s|}$ is chosen to so that for $s > 1/2$, $\|uv\|_s \leq \|u\|_s\|v\|_s$.  If $s < 0$, we define $H^s(\mathbb T)$ to be,
\begin{align*}
    H^s(\mathbb T) = \left\{ (u_{j})_{j=-\infty}^\infty : c_{|s|} \sum_{j=-\infty}^\infty |u_j|^2 (\max\{1,|j|\})^{2s} < \infty \right\}.
\end{align*}
The space $H^s(\mathbb U)$ is defined in precisely the same way.  Furthermore, all results established below concerning $H^s(\mathbb T)$ will extend to $H^s(\mathbb U)$ after, possibly, setting $z = \ee^{\ii \theta}$.  Lastly, we note that the norm
\begin{align*}
    \lvertiii \vec u \rvertiii_{s} = \sum_{j=-\infty}^\infty |u_j|^2 (1 + |j|)^{2s},
\end{align*}
is equivalent to the $H^s(\mathbb T)$ norm as defined above.  We use these norms interchangeably.

Then $H^s(\mathbb T)$ and $H^{-s}(\mathbb T)$ are dual with respect to the standard $\ell^2$ inner product as the pairing:
\begin{align*}
    ( u, v) = \sum_{j=-\infty}^\infty u_j v_j.
\end{align*}
Indeed, suppose $L$ is a bounded linear functional on $H^s(\mathbb T)$, then the Riesz representation theorem gives that
\begin{align*}
    L(u) = c_{|s|}\sum_{j=-\infty}^\infty  u_j w_j (\max\{1,|j|\})^{2s},
\end{align*}
for some $w \in H^s(\mathbb T)$.  But, define $v_j = w_j (\max\{1,|j|\})^{2s}$ and we have that
\begin{align*}
\|v\|_{-s}^2 = c_{|s|}\sum_{j=-\infty}^\infty |v_j|^2 (\max\{1,|j|\})^{-2s} = c_{|s|}\sum_{j=-\infty}^\infty |w_j|^2 (\max\{1,|j|\})^{2s} < \infty.
\end{align*}
Conversely, $L(u) := ( u, v)$, with $v \in H^{\mp s}(\mathbb T)$ is a bounded linear functional on $H^{\pm s}(\mathbb T),$ with
\begin{align*}
    c_{|s|}\|L\|_{\mp s} \leq \|v\|_{\pm s}.
\end{align*}
This follows from a straightforward application of the Cauchy-Schwarz inequality.  As it will be needed in what follows, for $s > 1/2$, multiplication of $u \in H^{s}(\mathbb T)$ and $v \in H^{-s}(\mathbb T)$ is a well-defined bounded linear operator on $H^{-s}(\mathbb T)$.  Indeed, for $\varphi \in H^s(\mathbb T)$ define multiplication via
\begin{align*}
    ( u v , \varphi ) := ( u, v \varphi ),
\end{align*}
and therefore
\begin{align*}
    \|u v\|_{-s} = \sup_{\varphi \in H^s(\mathbb T), \|\varphi\|_s = 1} ( u v , \varphi ) = \sup_{\varphi \in H^s(\mathbb T), \|\varphi\|_s = 1} ( u , v\varphi )  \leq \|u\|_{-s} \|v \varphi\|_s \leq C_s \|u\|_{-s} \|v\|_s,
\end{align*}
using the algebra property of $H^s(\mathbb T)$ for $s > 1/2$.

\subsection{Projections}
For $N > 0$ define $N_- = \lfloor N/2 \rfloor$ and $N_+ = \lfloor (N-1)/2 \rfloor$.  Note that $N_+ + N_- + 1 = N$.  Define the  interpolation operator\footnote{For $H^s(\mathbb U)$, one replaces $\check x_\ell$ with $\check z_\ell = \ee^{\ii \check x_\ell}$.} $\mathcal I_N$ on $H^s(\mathbb T)$, $s >1/2$ by
\begin{align*}
    \mathcal I_N u &= \sum_{j=-N_-}^{N_+} \check u_j e_j,\\
    \check u_j &= \frac{1}{N \sqrt{2 \pi}} \sum_{\ell=1}^N u(\check x_\ell) e_{-j}(\check x_\ell), \quad \check x_\ell = 2 \pi \frac{\ell -1}{N},
\end{align*}
and the Fourier truncation projection on $H^s(\mathbb T)$ for $s \in \mathbb R$
\begin{align*}
    \mathcal P_N u = \sum_{j=-N_-}^{N_+} u_j e_j.
\end{align*}
It follows from the aliasing formula that
\begin{align*}
    \check u_j = \sum_{p = - \infty}^\infty u_{p N + j}.
\end{align*}
One then has the following bounds \cite{Kress1993}.
\begin{theorem}\label{t:proj-Hs}
If $s \geq t$ then there exist a constant $D_{t,s}$ such that
\begin{align*}
    \|u - \mathcal P_N u\|_t \leq D_{t,s} N^{t -s} \|u\|_s.
\end{align*}
If, in addition $ s > 1/2$, there exist a constant $C_{t,s}$
\begin{align*}
    \|\mathcal I_N u - \mathcal P_N u\|_t \leq C_{t,s} N^{t-s} \|u\|_s.
\end{align*}
\end{theorem}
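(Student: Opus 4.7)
The plan is to derive both bounds directly from the Fourier coefficient expression $\|v\|_t^2 = c_{|t|}\sum_j |v_j|^2 \omega_j^{2t}$, where I abbreviate $\omega_j = \max\{1,|j|\}$; the second bound additionally invokes the aliasing identity $\check u_j = \sum_{p \in \mathbb Z} u_{pN+j}$ recorded just before the theorem statement.

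For the first bound, $\mathcal P_N$ acts diagonally in the Fourier basis, so
\begin{align*}
    \|u - \mathcal P_N u\|_t^2 = c_{|t|}\sum_{j \notin [-N_-, N_+]} |u_j|^2 \omega_j^{2t}.
\end{align*}
Every surviving index satisfies $\omega_j \geq \min\{N_- + 1, N_+ + 1\} \geq N/2$. Since $s \geq t$, the exponent $2(t-s)$ is non-positive, so $\omega_j^{2t} = \omega_j^{2(t-s)}\omega_j^{2s} \leq (N/2)^{2(t-s)}\omega_j^{2s}$. Factoring out $(N/2)^{2(t-s)}$ and comparing what remains with $\|u\|_s^2/c_{|s|}$ gives the claimed $N^{t-s}$ decay, with $D_{t,s}$ absorbing the constants $c_{|t|}/c_{|s|}$ and $2^{s-t}$.

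For the second bound, the aliasing identity gives $\check u_j - u_j = \sum_{p \neq 0}u_{pN+j}$ for $|j| \leq N_+$. I would apply a weighted Cauchy--Schwarz inequality,
\begin{align*}
    |\check u_j - u_j|^2 \leq \left(\sum_{p \neq 0}\omega_{pN+j}^{-2s}\right)\left(\sum_{p \neq 0}|u_{pN+j}|^2\omega_{pN+j}^{2s}\right).
\end{align*}
Since $|pN + j| \geq |p|N/2$ whenever $|j| \leq N_+$ and $p \neq 0$, the first factor is bounded by $2(N/2)^{-2s}\sum_{p \geq 1} p^{-2s}$. This series converges precisely because $s > 1/2$ --- exactly where the additional hypothesis enters --- and produces an $O(N^{-2s})$ bound. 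Inserting this into the definition of the norm yields
\begin{align*}
    \|\mathcal I_N u - \mathcal P_N u\|_t^2 \leq C_s c_{|t|}N^{-2s}\sum_{|j|\leq N_+}\omega_j^{2t}\sum_{p \neq 0}|u_{pN+j}|^2\omega_{pN+j}^{2s}.
\end{align*}
For $t \geq 0$ one bounds $\omega_j^{2t} \leq N^{2t}$ uniformly; the pairs $(j,p)$ with $|j| \leq N_+$ and $p \neq 0$ parametrise distinct integers $k = pN + j$ with $|k| > N_+$, so the remaining double sum is dominated by the tail of $\|u\|_s^2/c_{|s|}$, producing the factor $N^{2(t-s)}$ as required.

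The main technical subtlety I expect is the range $t < 0$ (which is still permitted by $s \geq t$), where the uniform bound $\omega_j^{2t} \leq N^{2t}$ fails because $\omega_j^{2t}$ peaks at $j = 0$. I would handle this by dualising: since $\mathcal I_N u - \mathcal P_N u$ lies in the finite-dimensional subspace $\ran \mathcal P_N$, I would test against $\varphi \in \ran \mathcal P_N$ with $\|\varphi\|_{-t} = 1$, transfer the index shift onto $\varphi$ via the Parseval identity (defining a lifted sequence $\tilde\varphi_{pN+j} = \varphi_j$ for $p \neq 0$), and then apply the already-established $-t \geq 0$ analogue of the estimate to $\tilde\varphi$. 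The overall argument closely follows the classical treatment in \cite{Kress1993}.
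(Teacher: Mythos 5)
Your argument for the first bound is correct for all $t \leq s$, and your argument for the second bound with $t \geq 0$ is also correct; both are the classical aliasing computation in Kress and Sloan \cite{Kress1993}, which the paper cites without reproducing. The genuine gap is the case $t < 0$. The dualization you sketch does not close it: testing $v = \mathcal I_N u - \mathcal P_N u$ against $\varphi \in \ran \mathcal P_N$ with $\|\varphi\|_{-t} = 1$ and transferring the aliasing shift onto $\varphi$ via the lifted sequence $\tilde\varphi$ (supported on $|k| > N_+$) yields $|(v,\varphi)| \leq \|u\|_s \|\tilde\varphi\|_{-s}$, and the same weighted Cauchy--Schwarz estimate of $\|\tilde\varphi\|_{-s}$ gives only $O(N^{-s})$, not $O(N^{t-s})$. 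This is not an artifact of the method: the stated inequality is actually false for $t < 0$. Take $u = e_N$, so that $\mathcal P_N u = 0$, while the aliasing identity gives $\check u_j = \delta_{j0}$ for $-N_- \leq j \leq N_+$, hence $\mathcal I_N u - \mathcal P_N u = e_0$. Then
\begin{align*}
    \frac{\|\mathcal I_N u - \mathcal P_N u\|_t}{\|u\|_s} = \sqrt{\frac{c_{|t|}}{c_{|s|}}}\, N^{-s},
\end{align*}
which exceeds $C_{t,s} N^{t-s}$ for every fixed $C_{t,s}$ once $N$ is large, since $N^t \to 0$ when $t < 0$. The correct hypotheses for the second inequality are therefore $0 \leq t \leq s$, $s > 1/2$, as in Kress's original formulation; with that additional restriction, your proof is complete and matches the cited source.
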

We have an immediate corollary.
\begin{corollary}
If $s \geq t$ and $s > 1/2$ then there exists a constant $E_{t,s} > 0$ such that
\begin{align*}
    \| u -\mathcal I_N u \|_t \leq E_{t,s} N^{t-s} \|u\|_s.
\end{align*}
\end{corollary}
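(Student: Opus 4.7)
The plan is to deduce the corollary directly from Theorem~\ref{t:proj-Hs} by inserting the truncation projection $\mathcal{P}_N u$ and applying the triangle inequality. Writing
\begin{align*}
    u - \mathcal I_N u = (u - \mathcal P_N u) + (\mathcal P_N u - \mathcal I_N u),
\end{align*}
I bound each term in the $H^t$ norm. The first piece is controlled by the first estimate of Theorem~\ref{t:proj-Hs}, giving $\|u - \mathcal P_N u\|_t \leq D_{t,s} N^{t-s} \|u\|_s$. The second piece is controlled by the second estimate of Theorem~\ref{t:proj-Hs}, which applies since $s > 1/2$, giving $\|\mathcal I_N u - \mathcal P_N u\|_t \leq C_{t,s} N^{t-s} \|u\|_s$.

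Adding the two bounds yields
\begin{align*}
    \|u - \mathcal I_N u\|_t \leq (D_{t,s} + C_{t,s}) N^{t-s} \|u\|_s,
\end{align*}
so one sets $E_{t,s} = D_{t,s} + C_{t,s}$ to conclude.

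There is no real obstacle here; the statement is essentially a repackaging of Theorem~\ref{t:proj-Hs}. The only point worth noting is that both hypotheses of the corollary ($s \geq t$ and $s > 1/2$) are precisely what is needed to invoke, respectively, the first and second estimates of Theorem~\ref{t:proj-Hs} in tandem, so no additional restriction arises.
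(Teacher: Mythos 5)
Your proof is correct and is exactly the intended argument: the paper states the corollary as ``immediate'' from Theorem~\ref{t:proj-Hs}, and the triangle inequality via $\mathcal P_N u$ is the obvious way to combine the two bounds there. Nothing further to add.
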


\section{Periodic differential operators}  \label{sec:diffop}
\subsection{An operator equation}



Consider the following differential operator $\mathcal L$ for $k > p$ and $\ell \geq 1$,
\begin{align}\label{eq:diff_op}
    \mathcal L u (\theta) = \underbrace{\sum_{j=q}^k c_j \frac{\dd^k u}{\dd \theta^k}(\theta)}_{\mathcal L_0u} + \underbrace{\sum_{j=0}^{p} a_j(\theta)\frac{\dd^j u}{\dd \theta^j}(\theta)}_{\mathcal L_1u}, 
    \begin{split}
    \quad a_j &\in H^{\ell}(\mathbb T), \quad j = 0,1,\ldots,p,\\ 
    c_j &\in \mathbb C, \quad j = p+1,\ldots,k, \quad c_k \neq 0.
    \end{split}
\end{align}
Typically, one should think of $q = p+1$ so that $\mathcal L_0$ contains the highest-order derivatives that are required to have constant coefficients.  But, in some situations, it is convenient to move lower-order derivatives with constant coefficients into $\mathcal L_0$.  And here $\ell$ must be sufficiently large so that multiplication by $a_j$ is bounded in the on the image of the $p$th derivative operator.  It follows immediately that $\mathcal L: H^s(\mathbb T) \to H^{s-k} (\mathbb T),$ is bounded if 
\begin{align*}
    \ell \geq  \max\{|s-p|,p,1\}.
\end{align*}
This implies that
\begin{align}\label{eq:scond}
    \min\{0, - \ell + p\} \leq s \leq \max\{2p, p + \ell\}.
\end{align}

Less restrictive conditions on the functions $a_j$ are possible but, for simplicity, we do not explore this further.  And this choice is convenient because  $\mathcal L$ is bounded from $L^2(\mathbb T)$ to $H^{-k}(\mathbb T).$  Next, define $\mathcal N_\zeta = \mathcal N$
\begin{align*}
    \mathcal N = \left( \mathcal L_0 - \zeta \id \right)^{-1},
\end{align*}
where $\zeta$ is a point not in the spectrum of $\mathcal L_0$.  For example, if $\mathcal L_0 = \frac{\dd^k}{\dd x^k}$ one can choose
\begin{align*}
    \zeta = \begin{cases} (-1)^{k/2} & k \text{ even},\\
    1 & k \text{ odd}. \end{cases}
\end{align*}
It follows that $\mathcal N: H^s(\mathbb T) \to H^{s+k}(\mathbb T)$ is bounded. We have the following.

\begin{theorem}\label{t:diff-conv}
    Suppose that $\mathcal L$ in \eqref{eq:diff_op} is invertible on $H^s(\mathbb T)$, satisfying \eqref{eq:scond} and $f \in H^{s-k}(\mathbb T)$. Then for $N$ satisfying
    \begin{align}\label{eq:Nlarge}
        \| \mathcal N \|_{s-k \to s} D_{s-p,s-k} N^{p-k} \|\mathcal L_1\|_{s \to s- p} < \frac 1 2 \frac{1}{\| \mathcal L^{-1}\mathcal N^{-1} \|_{s} },
    \end{align}
    the operator $\mathcal L_N = \mathcal L_0 + \mathcal P_N \mathcal L_1$ is invertible and there exists $c, C > 0$ with
    \begin{align*}
    C \leq 2 \| \mathcal L^{-1}\mathcal N^{-1} \|_{s} \|\mathcal N\|_{s-k \to s} \|\mathcal L_0\|_{s \to s-k},
\end{align*}
such that the unique solution of \begin{align*}
   ( \mathcal L_0 + \mathcal P_N \mathcal L_1)u_N = \mathcal P_N f, \quad u_N \in \ran \mathcal P_N,
\end{align*}
satisfies
\begin{align}\label{eq:diff-conv}
    c \| u - \mathcal P_N u\|_{s} \leq \| u - u_N\|_s \leq C \|u - \mathcal P_N u\|_s.
    \end{align}    
\end{theorem}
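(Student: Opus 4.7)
The plan is to apply Theorem~\ref{t:main} directly, with the splitting $\mathcal L = \mathcal L_0 + \mathcal L_1$ from \eqref{eq:diff_op}, the regulator $\mathcal N_0 = \mathcal N = (\mathcal L_0 - \zeta\,\id)^{-1}$, $\mathcal N_1 = 0$, and the trivial finite-dimensional approximant $\mathcal N_{1,N} = 0$. The admissibility hypotheses come for free: since $\mathcal L_0$ is a constant-coefficient differential operator, it acts as a Fourier multiplier on $H^s(\mathbb T)$, which gives both $\mathcal L_0 \mathcal P_N = \mathcal P_N \mathcal L_0$ and, as $\mathcal N$ is a rational function of $\mathcal L_0$, also $\mathcal N \mathcal P_N = \mathcal P_N \mathcal N$. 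A one-line computation then gives $\mathcal N \mathcal L = \id - \mathcal K$ with $\mathcal K = -\zeta\mathcal N - \mathcal N \mathcal L_1$ and, commuting $\mathcal P_N$ past $\mathcal N$, the identity $\mathcal N(\mathcal L_0 + \mathcal P_N \mathcal L_1) = \id - \mathcal K_N$ with $\mathcal K_N = -\zeta\mathcal N - \mathcal P_N \mathcal N \mathcal L_1$, which is precisely the relation \eqref{eq:comp}. Compactness of $\mathcal K$ on $H^s(\mathbb T)$ I would deduce from a Rellich argument: both $\mathcal N$ and $\mathcal N \mathcal L_1$ map $H^s(\mathbb T)$ boundedly into $H^{s+k-p}(\mathbb T)$, which embeds compactly into $H^s(\mathbb T)$ because $k > p$.

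The technical core is the quantitative estimate on $\mathcal K - \mathcal K_N = \mathcal N(\mathcal P_N - \id)\mathcal L_1$. I would send a test $u \in H^s(\mathbb T)$ first to $\mathcal L_1 u \in H^{s-p}(\mathbb T)$, then project via Theorem~\ref{t:proj-Hs} in the weaker space $H^{s-k}(\mathbb T)$ (allowed because $s-k \le s-p$) to pick up the factor $N^{p-k}$, and finish by mapping back with $\mathcal N : H^{s-k}(\mathbb T) \to H^s(\mathbb T)$. This yields
\begin{align*}
    \|\mathcal K - \mathcal K_N\|_s \le \|\mathcal N\|_{s-k \to s}\, D_{s-p,s-k}\, N^{p-k}\, \|\mathcal L_1\|_{s \to s-p}.
\end{align*}
Since $(\id - \mathcal K)^{-1} = \mathcal L^{-1}\mathcal N^{-1}$ follows from $\mathcal N \mathcal L = \id - \mathcal K$ and the invertibility of $\mathcal L$, hypothesis \eqref{eq:Nlarge} is exactly the smallness condition required by Theorem~\ref{t:main}. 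Invoking that theorem then delivers invertibility of $\mathcal L_0 + \mathcal P_N \mathcal L_1$ on $\ran \mathcal P_N$ and the upper bound in \eqref{eq:diff-conv}, with the stated constant $C$ coming from $2\|\mathcal L^{-1}\mathcal N^{-1}\|_s$ (the denominator in $C_N$ being at least $1/2$) multiplied by the trivial bound $\|\mathcal N \mathcal L_0\|_s \le \|\mathcal N\|_{s-k \to s}\|\mathcal L_0\|_{s \to s-k}$.

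For the lower bound I would sidestep Theorem~\ref{t:main} entirely: $u_N \in \ran \mathcal P_N$ and $u - \mathcal P_N u \in \ran(\id - \mathcal P_N)$ have disjoint Fourier supports and are therefore $H^s$-orthogonal, so
\begin{align*}
    \|u - u_N\|_s^2 = \|u - \mathcal P_N u\|_s^2 + \|\mathcal P_N u - u_N\|_s^2 \ge \|u - \mathcal P_N u\|_s^2,
\end{align*}
and $c = 1$ works. The only real obstacle I anticipate is the index-bookkeeping on Sobolev spaces: one must insert the projection estimate in $H^{s-k}$ rather than $H^s$ to convert the smoothing gain $k - p$ into the claimed rate $N^{p-k}$, and in parallel one must track $\mathcal N$ between the correct pair of spaces so that the final bound on $\|\mathcal K - \mathcal K_N\|_s$ lines up literally with the quantity appearing in \eqref{eq:Nlarge}. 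Once these indices are set correctly, the rest is a direct application of the abstract framework.
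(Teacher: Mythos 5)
Your proposal is correct and follows essentially the same route as the paper: both identify $\mathcal K - \mathcal K_N = -\mathcal N(\id-\mathcal P_N)\mathcal L_1$, both insert the projection bound from Theorem~\ref{t:proj-Hs} between $H^{s-p}$ and $H^{s-k}$ to extract the factor $D_{s-p,s-k}N^{p-k}$, and both then invoke Theorem~\ref{t:main} with the same constant-tracking. The one place you go beyond what the paper writes is the lower bound $c\|u-\mathcal P_N u\|_s \le \|u-u_N\|_s$: the paper states it in \eqref{eq:diff-conv} but its proof only explicitly produces the upper bound via the $C_N$ estimate; your Pythagorean argument using the $H^s$-orthogonality of $\ran\mathcal P_N$ and $\ran(\id-\mathcal P_N)$ gives $c=1$ cleanly and makes that half of the claim explicit, which is a genuine (if small) improvement in presentation.
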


\begin{proof}
We consider, as an operator on $H^s(\mathbb T)$,
\begin{align*}
    \mathcal N (\mathcal L_0 + \mathcal P_N \mathcal L_1) = \mathcal N \mathcal L_0 + \mathcal N \mathcal P_N \mathcal L_1 = \id - \mathcal J + \mathcal N \mathcal P_N \mathcal L_1.
\end{align*}
It is important that $-\mathcal J + \mathcal N \mathcal P_N \mathcal L_1$ is compact.   Note that $\mathcal J: H^{s}(\mathbb T) \to H^{s + k}(\mathbb T)$ is bounded. Then
\begin{align*}
    \| \mathcal N (\id - \mathcal P_N) \mathcal L_1 \|_{s} \leq \|\mathcal N\|_{s-k \to s} \|\id - \mathcal P_N\|_{s-p \to s-k} \|\mathcal L_1\|_{s \to s-p},
\end{align*}
and we know from Theorem~\ref{t:proj-Hs}, that $\|\id - \mathcal P_N\|_{s-p \to s-k} \leq D_{s-p,s-k} N^{p-k}$.   Given that $\mathcal L$ is invertible, we then apply Theorem~\ref{t:main}, choosing $N$ sufficiently large so that \eqref{eq:Nlarge} holds, to find that the unique solution of
\begin{align*}
   ( \mathcal L_0 + \mathcal P_N \mathcal L_1)u_N = \mathcal P_N f, \quad u_N \in \ran \mathcal P_N,
\end{align*}
satisfies \eqref{eq:diff-conv}.  Tracking the dependence on constants in Theorem~\ref{t:main} gives the result.
\end{proof}

\begin{remark}
Note that also, for example, if $s +k - p > 1/2$
\begin{align*}
    \| (\mathcal I_N- \mathcal P_N) \mathcal L_1 \|_{s + k \to s} \leq \|\mathcal I_N - \mathcal P_N\|_{s +k-p \to s} \|\mathcal L_1\|_{s+ k \to s + k-p} = O(N^{p-k}).
\end{align*}
We can then apply Theorem~\ref{t:IN-PN} to use $\mathcal I_N$ in place of $\mathcal P_N$ if $f \in H^1(\mathbb T)$.
\end{remark}

\subsection{Spectrum approximation}

As an application of the framework, we discuss the sense in which the finite number of eigenvalues of the operator
\begin{align*}
    \mathcal L_0 + \mathcal P_N \mathcal L_1: \ran \mathcal P_N \to \ran \mathcal P_N,
\end{align*}
approximate the spectrum of $\mathcal L_0 +\mathcal L_1$.  We first introduce some general considerations and results.

Consider a bounded operator $\mathcal L: \mathbb V \to \mathbb W$ where $\mathbb V,\mathbb W$ are Hilbert spaces and $\mathbb V \subset \mathbb W$ with a bounded inclusion operator.  The spectrum is defined by
\begin{align*}
    \sigma(\mathcal L) = \sigma(\mathcal L;\mathbb V, \mathbb W) = \{ z \in \mathbb C: z \id - \mathcal L : \mathbb V \to \mathbb W \text{ does not have a bounded inverse} \}.
\end{align*}
One useful quantity is also the $\epsilon$-pseudospectrum: 
\begin{align}\label{eq:pseudo}
    \sigma_\epsilon(\mathcal L) = \sigma_\epsilon(\mathcal L;\mathbb V, \mathbb W) = \overline{\{ z \in \mathbb C: \|(z \id - \mathcal L)^{-1}\|_{\mathbb W \to \mathbb V} > \epsilon^{-1} \}}.
\end{align}
For any Hilbert space $\mathbb X$, with $\mathbb V \subset \mathbb X$, $\mathbb V$ dense in $\mathbb X$, and continuously embedded, we can also consider $\sigma(\mathcal L;\mathbb X, \mathbb W)$ and $\sigma_\epsilon(\mathcal L;\mathbb X, \mathbb W)$. If $\mathbb X = \mathbb W$ we just write $\sigma(\mathcal L;\mathbb W)$ or $\sigma_\epsilon(\mathcal L;\mathbb W)$.  We use the convention that $\|(z \id - \mathcal L)^{-1}\|_{\mathbb W \to \mathbb V} = \infty$ if $z \in \sigma(\mathcal L)$.  Thus $\sigma_0(\mathcal L) = \sigma(\mathcal L)$. Then we denote the complement of the $\epsilon$-pseudospectrum by
\begin{align*}
    \rho_{\epsilon}(\mathcal L) := \sigma_{\epsilon}(\mathcal L)^c.
\end{align*}

The next proposition shows that an eigenfunction will be smoother if the coefficient functions are smoother and it applies specifically to \eqref{eq:diff_op}.
\begin{proposition}\label{p:ef-est}
For $0 \leq s \leq \ell + k$ there exists a constant $C_{s}$, independent of $\lambda$, such that if $v \in L^2(\mathbb T)$  is eigenfunction with eigenvalue $\lambda$,
\begin{align*}
    (\mathcal L_0 + \mathcal L_1) v = \lambda v, \quad \|v\|_0 = 1,
\end{align*}
then
\begin{align*}
    \|v\|_{s} \leq C_s ( |\lambda| + 2)^{\lceil \frac{s}{k-p} \rceil}.
\end{align*}
\end{proposition}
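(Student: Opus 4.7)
The natural approach is a bootstrap argument built on the resolvent identity for $\mathcal L_0$. Rewriting the eigenvalue equation as $(\mathcal L_0 - \zeta)v = (\lambda-\zeta)v - \mathcal L_1 v$ and applying $\mathcal N = (\mathcal L_0 - \zeta)^{-1}$ yields
\begin{align*}
v = (\lambda - \zeta)\mathcal N v - \mathcal N \mathcal L_1 v.
\end{align*}
Since $\mathcal N: H^t(\mathbb T) \to H^{t+k}(\mathbb T)$ is bounded for all $t$ (within the allowed range dictated by $\ell$), and $\mathcal L_1: H^t(\mathbb T) \to H^{t-p}(\mathbb T)$ is bounded for admissible $t$, applying this identity once takes $v$ from $H^t$ into $H^{t+k}$ (from the first term) intersected with $H^{t-p+k} = H^{t+(k-p)}$ (from the second term); the intersection is $H^{t+(k-p)}$. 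Thus each application of the identity gains exactly $k-p$ derivatives.

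I would define $s_m = m(k-p)$ and prove inductively that $\|v\|_{s_m} \le C_m (|\lambda|+2)^m$. The base case $m=0$ is given. For the inductive step, assume $v \in H^{s_{m-1}}$ with the claimed bound. Then
\begin{align*}
\|v\|_{s_m} \le |\lambda - \zeta|\,\|\mathcal N\|_{s_m-k \to s_m}\,\|v\|_{s_m-k} + \|\mathcal N\|_{s_{m-1}-p \to s_m}\,\|\mathcal L_1\|_{s_{m-1}\to s_{m-1}-p}\,\|v\|_{s_{m-1}}.
\end{align*}
Since $s_m - k = s_{m-1} - p \le s_{m-1}$, we have $\|v\|_{s_m-k} \le C\|v\|_{s_{m-1}}$ (or $\le \|v\|_0 = 1$ when $s_m - k < 0$), and both operator norms on the right are finite constants depending only on $m$ (hence on $s$) provided $s_m \le \ell + k$, which is exactly the hypothesis $s \le \ell+k$. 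Combining,
\begin{align*}
\|v\|_{s_m} \le C'_m(|\lambda|+2)\|v\|_{s_{m-1}} \le C_m(|\lambda|+2)^m,
\end{align*}
which closes the induction. For arbitrary $s \in [0,\ell+k]$, pick $m = \lceil s/(k-p)\rceil$ so that $s \le s_m$, and use $\|v\|_s \le \|v\|_{s_m} \le C_s(|\lambda|+2)^{\lceil s/(k-p)\rceil}$.

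The main thing to keep honest is bookkeeping: one has to confirm that at every intermediate level $t = s_{m-1}$ with $0 \le t \le \ell+k-(k-p)$, the multiplication operator $a_j \cdot \partial^j$ still maps $H^t$ into $H^{t-p}$ boundedly. This is guaranteed by the discussion following \eqref{eq:scond}, as long as $\ell \ge \max\{|t-p|, p, 1\}$ at each step, which holds under $s \le \ell+k$. The absence of a $\|v\|_{s_{m-1}-p}$ factor that would degrade the power of $|\lambda|$ is exactly why the gain $k-p$ (rather than $k$) is the right unit: the non-constant-coefficient part of the equation gives up $p$ derivatives while the resolvent $\mathcal N$ returns $k$. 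No cancellation or spectral information about $\mathcal L_1$ is needed; the result is purely elliptic regularity in disguise, with the $\lambda$-dependence tracked through the linear factor $(\lambda - \zeta)$ that appears once per iterate.
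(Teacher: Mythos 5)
Your approach is the same as the paper's: rewrite the eigenvalue equation as $v = (\lambda-\zeta)\mathcal N v - \mathcal N\mathcal L_1 v$ and bootstrap, gaining $k-p$ derivatives and a factor of order $|\lambda|$ at each step. The core recursion and the counting $m = \lceil s/(k-p)\rceil$ are right.

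There is, however, a bookkeeping gap in the choice of iteration levels. You iterate at $s_m = m(k-p)$ and claim the step from $s_{m-1}$ to $s_m$ is valid ``provided $s_m \le \ell + k$, which is exactly the hypothesis $s \le \ell+k$.'' The constraint you identify is correct: boundedness of $\mathcal L_1: H^{s_{m-1}} \to H^{s_{m-1}-p}$ requires $s_{m-1} \le \ell + p$, i.e.\ $s_m \le \ell+k$. But that condition is \emph{not} equivalent to $s \le \ell+k$, because $s_m = \lceil s/(k-p)\rceil (k-p) \ge s$ with strict inequality whenever $s$ is not a multiple of $k-p$. Concretely, take $k=2$, $p=0$, $\ell=1$ and $s = 3 = \ell+k$. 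Then $m=2$, $s_1 = 2$, $s_2 = 4$. The step from $s_1$ to $s_2$ requires $\mathcal L_1 = a_0\cdot$ to be bounded on $H^2$, but $a_0 \in H^1$ only gives boundedness on $H^t$ for $|t| \le 1$; moreover $s_2 = 4 > \ell+k = 3$, so $\|v\|_{s_2}$ need not even be finite. Your induction therefore breaks before it reaches $s$.

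The fix is small but essential, and is what the paper's terse argument is implicitly doing: anchor the ladder at $s$ rather than at $0$. Set $r_m = \max\{0,\; s - (n-m)(k-p)\}$ for $m = 0,\dots,n$ with $n = \lceil s/(k-p)\rceil$. Then $r_n = s$, each step has size at most $k-p$, and the intermediate levels satisfy $r_{m} \le r_{n-1} = s - (k-p) \le \ell + k - (k-p) = \ell + p$ for $m < n$, which is precisely the constraint you need for $\mathcal L_1$ (and $\|\cdot\|_{r_m} \le \|\cdot\|_{s}$, so the target is never overshot). With that replacement the rest of your argument goes through verbatim and yields the stated bound with exponent $\lceil s/(k-p)\rceil$.
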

\begin{proof}
  We begin with
\begin{align*}
    \mathcal L_0 v = (\lambda \id - \mathcal L_1) v.
\end{align*}
This gives
\begin{align*}
    v = \mathcal J v  + \mathcal N(\lambda \id - \mathcal L_1) v, \quad \id - \mathcal J = \mathcal N \mathcal L_0.
\end{align*}
We have three operators to consider: There exists constants $C = C(s) > 0$ such that,
\begin{align*}
    \|\lambda\mathcal N \|_{s \to s + k} &\leq C |\lambda|,\\
    \|\mathcal J\|_{s \to s + k} &\leq C ,\\
     \|\mathcal N \mathcal L_1\|_{s \to s - p + k} & \leq C,
\end{align*}
whenever these operators are bounded.  The first two come with no restrictions on $s$.  The last requires $s$ satisfying $s - p \leq \ell$ so that multiplication by $a_j$ is bounded after taking $p$ derivatives. For $s \leq \ell +p$
\begin{align}\label{eq:ef_bound}
    \|v\|_{t} \leq \|v\|_{s - p + k} \leq [C (|\lambda| + 1) + C] \|v\|_s, \quad r \leq s + k.
\end{align}
The result follows.
\end{proof}

\begin{remark}
If $\mathcal L = \frac{\dd^k}{\dd \theta^k}$, then the eigenfunctions of $\mathcal L$ are simply $v_j(\theta) = \ee^{\ii j \theta}$ with eigenvalues $\lambda_j = (\ii j)^k$. Then for $j \neq 0$
\begin{align*}
    \|v_j\|_{kn} = |j|^{kn} = |\lambda_j|^n,
\end{align*}
which shows that the previous proposition, with $p = 0$, is nearly sharp.  But for general, non-constant coefficient operators, we do not expect it to be sharp.
\end{remark}

\begin{remark}\label{rem:proj_ef_est}
If we replace $\mathcal L_1$ with $\mathcal P_N \mathcal L_1$ or $\mathcal P_N \mathcal L_1 \mathcal P_N$ in the previous theorem, the statement remains true.
\end{remark}

\begin{corollary}
For $0 < s \leq \ell +k $, the $L^2(\mathbb T)$ eigenvalues of $\mathcal L$ coincide with the $H^s(\mathbb R)$ eigenvalues of $\mathcal L$.
\end{corollary}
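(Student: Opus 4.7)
The plan is to obtain both set inclusions directly, using Proposition~\ref{p:ef-est} for the nontrivial direction and the continuous Sobolev embedding for the trivial one. Thus the corollary should be essentially immediate once the bootstrap regularity bound is in hand.

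\textbf{Easy direction.} For $s \geq 0$, the inclusion $H^s(\mathbb T) \hookrightarrow L^2(\mathbb T)$ is continuous. Hence, if $v \in H^s(\mathbb T)$ is nonzero and satisfies $\mathcal L v = \lambda v$ (as an identity in $H^{s-k}(\mathbb T)$), then $v \neq 0$ as an element of $L^2(\mathbb T)$ as well, and the same identity holds in $H^{-k}(\mathbb T)$ since $H^{s-k}(\mathbb T) \hookrightarrow H^{-k}(\mathbb T)$ (both sides being the same function/distribution). So $\lambda$ is an $L^2(\mathbb T)$ eigenvalue.

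\textbf{Harder direction (regularity).} Conversely, suppose $\lambda$ is an $L^2(\mathbb T)$ eigenvalue with eigenfunction $v \in L^2(\mathbb T) \setminus \{0\}$. After normalizing $\|v\|_0 = 1$, Proposition~\ref{p:ef-est} yields
\begin{align*}
    \|v\|_s \leq C_s (|\lambda|+2)^{\lceil s/(k-p)\rceil} < \infty, \qquad 0 < s \leq \ell + k,
\end{align*}
so $v \in H^s(\mathbb T)$. Since $v \neq 0$ in $L^2(\mathbb T) = H^0(\mathbb T)$, the same $v$ is a nonzero element of $H^s(\mathbb T)$. The equation $\mathcal L v = \lambda v$ holds in $H^{-k}(\mathbb T)$; because now $v \in H^s(\mathbb T)$, we have $\mathcal L v \in H^{s-k}(\mathbb T)$ and $\lambda v \in H^s(\mathbb T) \hookrightarrow H^{s-k}(\mathbb T)$, and the distributional equality forces equality in $H^{s-k}(\mathbb T)$. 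Thus $\lambda$ is an $H^s(\mathbb T)$ eigenvalue.

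\textbf{Obstacle.} There is no real obstacle — the entire content of the corollary is the elliptic regularity bootstrap already accomplished in Proposition~\ref{p:ef-est}. The only thing to verify carefully is that the eigenvalue equation is preserved under change of ambient Sobolev space, and this follows from the continuous embeddings among the $H^s(\mathbb T)$ and the fact that $\mathcal L$ acts as the same differential operator in each setting.
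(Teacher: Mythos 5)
Your proof is correct and is exactly the intended route: the paper states the corollary immediately after Proposition~\ref{p:ef-est} without a written proof, because the content is precisely the bootstrap regularity bound for the nontrivial inclusion plus the continuous embedding $H^s(\mathbb T)\hookrightarrow L^2(\mathbb T)$ for the trivial one, as you lay out. (The $H^s(\mathbb R)$ in the statement is a typo for $H^s(\mathbb T)$, which you correctly read through.)
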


\begin{theorem}\label{t:eval-gen}
\begin{enumerate}
    \item Suppose $s$ satisfies \eqref{eq:scond} and $\mathcal L$ is given in \eqref{eq:diff_op}.  Then there exists a constant $c = c(\mathcal L,s)$ such that if
    \begin{align*}
        N >  c \epsilon^{1/(p-k)},
    \end{align*}
    then no eigenvalue of $\mathcal L_0 + \mathcal P_N \mathcal L_1$, restricted to $\ran \mathcal P_N$, lies in the set $\rho_{\epsilon}(\mathcal L; H^s(\mathbb T), H^{s-k}(\mathbb T))$.
   
    \item Suppose $\lambda$ is an isolated eigenvalue of $\mathcal L$  and $\delta = \delta(N,\lambda) = O(1)$ as $N \to \infty$ is such that
    \begin{align*}
        &\max_{|z-\lambda| = \delta} \| (\id - \mathcal K(z) )^{-1}\|_0 N^{p-k} = o(1),\\
        &\delta\max_{|z-\lambda| = \delta} \| (\id - \mathcal K(z) )^{-1}\|_0 \max_{|z-\lambda| = \delta}  \| (\id - \mathcal K(z) )^{-1}\|_t N^{-t } (|\lambda| +1)^{\lceil \frac{t}{k-p} \rceil} = o(1),\\
        & \id - \mathcal K(z) = \mathcal N(\mathcal L_0  + \mathcal L_1 - z \id),
    \end{align*}
    for some $0 \leq t \leq k + \ell$. Then there exists $N_0 = N_0(\delta,\lambda)$, such that for all $N > N_0$ there exists an eigenvalue $\lambda_N$ of $\mathcal L_0 + \mathcal P_N \mathcal L_1$ satisfying
    \begin{align*}
        |\lambda - \lambda_N| \leq \delta.
    \end{align*}
\end{enumerate}
\end{theorem}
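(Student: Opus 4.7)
The plan rests on two observations about the $z$-dependent Fredholm regulator identity $\mathcal{N}(\mathcal{L}-z\id)=\id-\mathcal{K}(z)$, where $\mathcal{K}(z)=\mathcal{J}-\mathcal{N}\mathcal{L}_1+z\mathcal{N}$: first, $\mathcal{K}(z)$ is polynomial (hence entire) in $z$; second, the defect $\mathcal{K}(z)-\mathcal{K}_N(z)=-\mathcal{N}(\id-\mathcal{P}_N)\mathcal{L}_1$ is \emph{independent of $z$}, and by the estimate used in the proof of Theorem~\ref{t:diff-conv} satisfies $\|\mathcal{K}(z)-\mathcal{K}_N(z)\|_s\leq C N^{p-k}$ uniformly in $z$. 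Part (1) falls out directly: for $z\in\rho_\epsilon(\mathcal{L};H^s,H^{s-k})$ one has $\|(\mathcal{L}-z\id)^{-1}\|_{s-k\to s}\leq\epsilon^{-1}$ and hence $\|(\id-\mathcal{K}(z))^{-1}\|_s\leq\epsilon^{-1}\|\mathcal{N}^{-1}\|_{s\to s-k}$; a Neumann-series perturbation then yields invertibility of $\id-\mathcal{K}_N(z)$ as soon as $C N^{p-k}\cdot\epsilon^{-1}\|\mathcal{N}^{-1}\|_{s\to s-k}<\tfrac{1}{2}$, which rearranges to $N>c\,\epsilon^{1/(p-k)}$. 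Theorem~\ref{t:main} applied to $\mathcal{L}-z\id$ transfers the invertibility to $\mathcal{L}_0+\mathcal{P}_N\mathcal{L}_1-z\id$ on $\ran\mathcal{P}_N$, precluding $z$ from being an eigenvalue of the discretization.

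For Part (2), my plan is a proof by contradiction via a Riesz-projection argument. Choose an eigenfunction $v$ of $\mathcal{L}$ with eigenvalue $\lambda$ normalized by $\|v\|_0=1$, and set $v_N=\mathcal{P}_N v$. By Proposition~\ref{p:ef-est}, $v\in H^t$ with $\|v\|_t\leq C_t(|\lambda|+2)^{\lceil t/(k-p)\rceil}$, and Theorem~\ref{t:proj-Hs} gives $\|v-v_N\|_0\leq D_{0,t}N^{-t}\|v\|_t$; in particular $\|v_N\|_0\geq\tfrac{1}{2}$ for all sufficiently large $N$. Using $\mathcal{L}_0\mathcal{P}_N=\mathcal{P}_N\mathcal{L}_0$ and $\mathcal{L}v=\lambda v$, a direct computation yields
\begin{align*}
(\mathcal{L}_0+\mathcal{P}_N\mathcal{L}_1-\lambda\id)v_N=-\mathcal{P}_N\mathcal{L}_1(v-v_N),
\end{align*}
and applying $\mathcal{N}$ (which commutes with $\mathcal{P}_N$ since $\mathcal{N}_0=\mathcal{N}$) gives $(\id-\mathcal{K}_N(\lambda))v_N=-\mathcal{P}_N\mathcal{N}\mathcal{L}_1(v-v_N)$.

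Suppose for contradiction that no eigenvalue of $\mathcal{L}_0+\mathcal{P}_N\mathcal{L}_1$ lies in $B(\lambda,\delta)$; then $(\id-\mathcal{K}_N(z))^{-1}$ is analytic on the closed disk $\overline{B(\lambda,\delta)}$. The first hypothesis together with the uniform-in-$z$ Neumann argument from the paragraph above (exploiting the $z$-independence of $\mathcal{K}(z)-\mathcal{K}_N(z)$) produces a uniform bound $\|(\id-\mathcal{K}_N(z))^{-1}\|_0\leq 2\max_{|z-\lambda|=\delta}\|(\id-\mathcal{K}(z))^{-1}\|_0$ on the contour, which transfers to the interior by the maximum principle for the subharmonic function $\|(\id-\mathcal{K}_N(z))^{-1}\|_0$. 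Inverting the displayed identity and combining with the projection-error bound contradicts $\|v_N\|_0\geq\tfrac{1}{2}$ under the second hypothesis. The main obstacle I anticipate is matching the precise form of the second hypothesis---where both the factor $\delta$ and the $H^t$-resolvent norm $\|(\id-\mathcal{K}(z))^{-1}\|_t$ appear alongside the $H^0$-version; these likely arise from carrying out the Riesz-projection contour integral directly rather than passing through the maximum principle (picking up the arclength $\delta$) and from bootstrapping the estimate by exploiting the higher regularity of $v$ through the $H^t$-stability of $(\id-\mathcal{K}_N(z))^{-1}$, with the $(|\lambda|+1)^{\lceil t/(k-p)\rceil}$ factor entering through Proposition~\ref{p:ef-est}.
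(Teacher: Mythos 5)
Your Part (1) is essentially the paper's argument: exploit the $z$-independence of $\mathcal K(z)-\mathcal K_N(z)$, apply a Neumann estimate, and use the resolvent bound implicit in the definition of the $\epsilon$-pseudospectrum; the constant $c$ rearranges as you indicate.

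For Part (2), your route is genuinely different from the paper's. The paper compares the Riesz projections $f_N=\tfrac{1}{2\pi\ii}\oint_{|z-\lambda|=\delta}(z\id-\mathcal L_0-\mathcal P_N\mathcal L_1)^{-1}\mathcal P_N f\,\dd z$ and $f=\tfrac{1}{2\pi\ii}\oint_{|z-\lambda|=\delta}(z\id-\mathcal L)^{-1}f\,\dd z$, estimates $\|f-f_N\|_0$ by applying Theorem~\ref{t:main} to $(\mathcal L-z\id)u=f$ pointwise on the contour, and concludes $f_N\neq 0$. You instead compute the residual identity $(\id-\mathcal K_N(\lambda))\mathcal P_N v=-\mathcal N\mathcal P_N\mathcal L_1(v-\mathcal P_N v)$ at the single point $z=\lambda$ and, assuming no nearby eigenvalue, pass the Neumann bound from the contour to $z=\lambda$ via the maximum principle for the subharmonic resolvent norm. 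Both arguments are sound. The paper's contour integral produces the arclength factor $\delta$ and, because $(\id-\mathcal K(z))^{-1}$ is applied to the eigenfunction $f\in H^t$ before truncating, the $H^t$-resolvent factor $\max_{|z-\lambda|=\delta}\|(\id-\mathcal K(z))^{-1}\|_t$; your argument sidesteps both, yielding a hypothesis involving only $\max_{|z-\lambda|=\delta}\|(\id-\mathcal K(z))^{-1}\|_0\,N^{-t}(|\lambda|+1)^{\lceil t/(k-p)\rceil}\to 0$. Because $\lambda$ is an eigenvalue, $\delta\max_{|z-\lambda|=\delta}\|(\id-\mathcal K(z))^{-1}\|_t$ is bounded away from zero, so the paper's hypothesis implies yours and your argument still proves the stated theorem (and arguably more). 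One point that should be made explicit: to conclude that $(\id-\mathcal K_N(z))^{-1}$ is analytic on the disk you need invertibility of $\id-\mathcal K_N(z)$ on all of $H^s$, not just the absence of eigenvalues of the finite-dimensional restriction; this requires additionally that $\mathcal L_0-z\id$ is invertible on $\ran(\id-\mathcal P_N)$, which holds for $N$ sufficiently large since $z$ is bounded and the symbol of $\mathcal L_0$ grows like $|j|^k$. Either state this, or carry out the argument entirely for the analytic finite-dimensional family $A_N(z)=(\mathcal L_0-z\id+\mathcal P_N\mathcal L_1)|_{\ran\mathcal P_N}$, noting that $\ran\mathcal P_N$ is an invariant subspace of $\id-\mathcal K_N(z)$ so the Neumann bound restricts.
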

\begin{proof}
We first establish (1).  We apply $\mathcal N$ so that we consider the operators
\begin{align*}
    \id - \mathcal K_N(z) \quad \text{and} \quad \id - \mathcal K(z),
\end{align*}
where
\begin{align*}
    \mathcal K(z) &= \id - \mathcal N(\mathcal L_0 - z \id + \mathcal L_1 ),\\
\mathcal K_N(z) &= \id - \mathcal N(\mathcal L_0 - z \id + \mathcal P_N \mathcal L_1 ).
\end{align*}
It follows that if $\id - \mathcal K_N(z)$ is invertible  on $H^s(\mathbb T)$, then so is \begin{align*}
 \mathcal L_0 - z \id + \mathcal P_N\mathcal L_1 ,
\end{align*}
on $\ran \mathcal P_N$.
If
\begin{align*}
    \|(\id - \mathcal K(z))^{-1}\|_{s} \|\mathcal K(z) - \mathcal K_N(z)\|_{s} < 1,
\end{align*}
then $\id - \mathcal K_N(z)$ is invertible and $z$ is not an eigenvalue of $\mathcal L_0 + \mathcal P_N \mathcal L_1$, again restricted to $\ran \mathcal P_N$.  Consider
\begin{align*}
    \|\mathcal K(z) - \mathcal K_N(z)\|_{s} = \|\mathcal K(0) - \mathcal K_N(0)\|_{s},
\end{align*}
 Let 
 \begin{align*}
     N_0 = \left(\frac{3  \|\mathcal L_1\|_{s \to s-p} \|\mathcal N^{-1}\|_{s \to s-k}\|\mathcal N\|_{s-k \to s} D_{s-p,s-k}}{2 \epsilon}\right)^{1/(k-p)},
 \end{align*}
 where $D_{s-p,s-k}$ is the same constant as in Theorem~\ref{t:proj-Hs}.
 We then estimate, for $N > N_0$,
 \begin{align*}
     \|\mathcal K(0) - \mathcal K_N(0)\|_{s} = \| \mathcal N(\id - \mathcal P_N) \mathcal L_1\|_s &\leq   \|\mathcal N\|_{s -k  \to s} \|\id - \mathcal P_N\|_{s -p \to s-k}\|\mathcal L_1\|_{s \to s-p}\\
     &\leq  D_{s-p,s-k} N^{p-k} \|\mathcal N\|_{s-k \to s} \|\mathcal L_1\|_{s \to s-p} \leq \frac{2 \epsilon}{3\|\mathcal N^{-1}\|_{s \to s-k}}.
 \end{align*}
 Thus for $z \in \rho_{\epsilon}(\mathcal L; H^s(\mathbb T), H^{s-k}(\mathbb T))$,
\begin{align*}
    \|(\id - \mathcal K(z))^{-1}\|_{s} \|\mathcal K(z) - \mathcal K_N(z)\|_{s} &\leq \frac{2 \epsilon \|(\mathcal L- z \id)^{-1} \mathcal N^{-1}\|_{s}}{3\|\mathcal N^{-1}\|_{s \to s-k}}\\
    & \leq  \frac{2 \epsilon\|(z\id - \mathcal L)^{-1}\|_{s -k \to s}}{3} \leq \frac 2 3,
\end{align*}
if $N > N_0$.  This establishes (1).

For (2), if $\lambda$ is an eigenvalue of $\mathcal L$, $\mathcal L f = \lambda f$, $\|f\|_0 = 1$, consider
\begin{align*}
    f_N &:= \frac{1}{2 \pi \ii} \oint_{|z - \lambda| = \delta} (z\id - \mathcal L_0 - \mathcal P_N \mathcal L_1 )^{-1} \mathcal P_N f \dd z,\\
    f &= \frac{1}{2 \pi \ii} \oint_{|z - \lambda| = \delta} (z \id - \mathcal L)^{-1} f \dd z
\end{align*}
where $\delta$ is chosen sufficiently small so that the contour of integration does not encircle any other eigenvalues of $\mathcal L$.

If we show that for a given value of $\delta$, that $f_N \neq 0$, then there must be an eigenvalue of $\mathcal L_0 + \mathcal P_N \mathcal L_1$ encircled by, or on, the contour of integration.  If an eigenvalue lies on the contour, we are done, so suppose this does not occur and we just need to show that $f_N \neq 0$. The first simple estimate is
\begin{align*}
    \|f - f_N\|_0 \leq \delta  \max_{|z - \lambda| = \delta}\| (z\id - \mathcal L_0 - \mathcal P_N \mathcal L_1 )^{-1} \mathcal P_Nf - (z \id - \mathcal L)^{-1} f \|_0.
\end{align*}
Replacing $f$ with $-f$, we must consider the operator equation
\begin{align*}
    (\mathcal L - z \id )u  = f,
\end{align*}
and it is approximation
\begin{align*}
    \left( \mathcal L_0 + \mathcal P_N \mathcal L_1 - z \id \right)u_N = \mathcal P_N f, \quad u_N \in \ran \mathcal P_N.
\end{align*}
Using the above estimates and notation, with $s = 0$,
\begin{align*}
    &\| (\id - \mathcal K(z) )^{-1}\|_0 \|\mathcal K(z) - \mathcal K_N(z)\|_0 \leq H_{p,k}  \gamma(\delta) N^{p-k},\\
    &H_{p,k}: = D_{-p,-k} \|\mathcal N\|_{-p \to k-p} \|\mathcal L_1\|_{0 \to -p},\\
    &\gamma(\delta) := \max_{|z-\lambda| = \delta} \| (\id - \mathcal K(z) )^{-1}\|_0
\end{align*}
Then Theorem~\ref{t:main} applies, for sufficiently large $N$, $H_{p,k}\gamma(\delta) N^{p-k} < 1$,
\begin{align*}
    \|f - f_N\|_0 &\leq  \delta \frac{\gamma(\delta)}{1 - H_{p,k}\gamma(\delta) N^{p-k}}\|\mathcal N\|_{-k \to 0} \|\mathcal L_0\|_{0 \to -k} \max_{|z - \lambda| = \delta}\|u(z) - \mathcal P_N u(z)\|_0,\\
    u(z) &= (\mathcal L - z \id )^{-1} f.
\end{align*}
Then, we estimate, using Proposition~\ref{p:ef-est},
\begin{align*}
    \|u(z) - \mathcal P_N u(z)\|_0 &\leq \|( \id - \mathcal P_N) (\id - \mathcal K(z))^{-1} f\|_0 \leq D_{0,t} N^{-t} \|(\id - \mathcal K(z))^{-1} f\|_t \\
    &\leq D_{0,t} N^{-t} \max_{|z-\lambda| = \delta} \|(\id - \mathcal K(z))^{-1}\|_{t}\|f\|_t.
\end{align*}
Then $\|f\|_t$ can be estimated using Proposition~\ref{p:ef-est}: For $0 \leq t \leq \ell + k$
\begin{align*}
    \|f\|_t \leq C_t (|\lambda| +1)^{\lceil \frac{t}{k-p} \rceil}.
\end{align*}
Therefore
\begin{align*}
    \|f - f_N\|_0 \leq  \delta \frac{\gamma(\delta)}{1 - H_{p,k}\gamma(\delta) N^{p-k}}\|\mathcal N\|_{-k \to 0} \|\mathcal L_0\|_{0 \to -k} D_{0,t} N^{-t} \max_{|z-\lambda| = \delta} \|(\id - \mathcal K(z))^{-1}\|_{t}\|f\|_t.
\end{align*}
\end{proof}

While Theorem~\ref{t:eval-gen} is general, it does not provide much in terms of convergence rates.  It states that all eigenvalues eventually enter the $\epsilon$-pseudospectrum.  The second part of the theorem applies, in particular, for each $\delta$ and $\lambda$ fixed, establishing that every true eigenvalue is approximated.  If more is known about the operator under consideration, rates become apparent.  For example, if we consider self-adjoint operators, we can obtain much stronger results.

\subsection{Improvements for self-adjoint operators}\label{sec:sa}

The first results of this section apply much more generally.  In this section the resolvent operators, i.e., operators of the form $(z \id - \mathcal L)^{-1}$, play the role of the preconditioning operator $\mathcal N$.  And the forthcoming results could be established for normal operators, but we do not pursue that here.
\begin{theorem}\label{t:sa-mult}
Consider bounded self-adjoint operators\footnote{By self-adjoint, we mean that operator in question is self-adjoint on $\mathbb W$ with dense domain $\mathbb V$.} $\mathcal L_N, \mathcal L: \mathbb V \to \mathbb W$ on Hilbert spaces $\mathbb V, \mathbb W$, where $\mathbb V \subset \mathbb W$ and the inclusion operator is bounded.  Suppose:
\begin{enumerate}
    \item $\mathcal L_N, \mathcal L$ have pure-point spectrum.
    \item $\mathbb V$ is dense in $\mathbb W$ and $\mathbb V$ is separable.
    \item  $\lambda$ is an eigenvalue of $\mathcal L$ that is of finite geometric multiplicity and $\delta> 0$ is such that $\lambda$ is a distance more than $3\delta$ away from all other elements of the spectrum of $\mathcal L$.
    \item There exists $N_0 = N_0(\lambda,\delta)$ such that for $\delta  \leq |z-\lambda| \leq  2\delta$, $z\id - \mathcal L_N$ is invertible on $\mathbb W$ for $N > N_0$ and 
\begin{align*}
\|(z \id - \mathcal L)^{-1} (\mathcal L - \mathcal L_N) v\|_{\mathbb W} \overset{N \to \infty}{\to} 0 ,\\
\|(z \id - \mathcal L_N)^{-1} (\mathcal L - \mathcal L_N) u\|_{\mathbb W} \overset{N \to \infty}{\to} 0,
\end{align*}
uniformly in $z$, $\delta  \leq |z-\lambda| \leq  2\delta$, for all $\mathbb W$-normalized eigenfunctions $v$ of $\mathcal L_N$ and $u$ of $\mathcal L$ associated to eigenvalues that lie in  $\{z \in \mathbb C: |z - \lambda| \leq \delta\}$.
\end{enumerate}
Then there exists $N_0'$ such that for $N > N_0'$ the geometric multiplicity of $\lambda$ is equal to the sum of the geometric multiplicities of the eigenvalues $\lambda'$ of $\mathcal L_N$ satisfying $|\lambda' - \lambda| < \delta$.
\end{theorem}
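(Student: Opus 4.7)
The plan is to work with Riesz spectral projections attached to a circle $\Gamma = \{z : |z-\lambda| = r\}$ for some fixed $r$ with $\delta < r < 2\delta$. By hypothesis (3), $\Gamma$ encloses only the eigenvalue $\lambda$ of $\mathcal L$, and by hypothesis (4) we may choose $N_0'$ so that $\Gamma$ lies in the resolvent set of $\mathcal L_N$ for all $N > N_0'$. Define
\begin{align*}
    P = \frac{1}{2\pi \ii} \oint_{\Gamma} (z \id - \mathcal L)^{-1} \, \dd z, \qquad P_N = \frac{1}{2\pi \ii} \oint_{\Gamma} (z \id - \mathcal L_N)^{-1} \, \dd z.
\end{align*}
Because $\mathcal L$ and $\mathcal L_N$ are self-adjoint with pure point spectrum, these are (orthogonal) projections onto the direct sum of eigenspaces attached to the enclosed eigenvalues. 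In particular, $\mathrm{rank}(P) = m$ is the geometric multiplicity of $\lambda$, and $\mathrm{rank}(P_N)$ is the sum of the geometric multiplicities of the eigenvalues $\lambda'$ of $\mathcal L_N$ with $|\lambda'-\lambda| < r$. Since hypothesis (4) also excludes eigenvalues of $\mathcal L_N$ from the annulus $\delta \leq |z-\lambda| \leq 2\delta$, these are exactly the eigenvalues with $|\lambda'-\lambda|<\delta$. Reducing to showing $\mathrm{rank}(P_N) = m$ for large $N$ is then the goal.

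The key estimates come from the second resolvent identity. For an $\mathbb W$-normalized eigenfunction $v$ of $\mathcal L_N$ with $\mathcal L_N v = \mu v$ and $|\mu - \lambda| \leq \delta$,
\begin{align*}
(z \id - \mathcal L)^{-1} v = \frac{v}{z-\mu} + \frac{1}{z-\mu}(z\id - \mathcal L)^{-1}(\mathcal L - \mathcal L_N) v,
\end{align*}
and integrating over $\Gamma$, together with $\frac{1}{2\pi\ii}\oint_{\Gamma}(z-\mu)^{-1}\dd z = 1$ (since $\mu$ lies inside $\Gamma$), gives
\begin{align*}
P v - v = \frac{1}{2\pi \ii}\oint_{\Gamma} \frac{(z\id - \mathcal L)^{-1}(\mathcal L - \mathcal L_N) v}{z-\mu}\, \dd z.
\end{align*}
Since $|z-\mu|\geq r - \delta > 0$ on $\Gamma$, the first uniform-convergence clause in hypothesis (4) yields $\|Pv - v\|_{\mathbb W} \to 0$ uniformly over all such $v$. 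The symmetric calculation, applied to eigenfunctions $u$ of $\mathcal L$ with eigenvalue $\lambda$ and using the second clause of hypothesis (4), gives $\|P_N u - u\|_{\mathbb W} \to 0$ uniformly over all $\mathbb W$-normalized such $u$.

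The rank comparison is then a standard linear-dependence argument. Suppose along a subsequence that $\mathrm{rank}(P_N) \geq m + 1$; self-adjointness supplies $\mathbb W$-orthonormal eigenfunctions $v_N^{(1)}, \dots, v_N^{(m+1)}$ whose eigenvalues lie within $\delta$ of $\lambda$. Since $\mathrm{ran}(P)$ is $m$-dimensional, the vectors $\{P v_N^{(i)}\}_{i=1}^{m+1}$ are linearly dependent, so one can choose coefficients $c_{N,i}$ with $\max_i |c_{N,i}| = 1$ and $\sum_i c_{N,i} P v_N^{(i)} = 0$. Setting $w_N := \sum_i c_{N,i} v_N^{(i)}$, orthonormality gives $\|w_N\|_{\mathbb W} \geq 1$, whereas
\begin{align*}
\|w_N\|_{\mathbb W} = \|P w_N - w_N\|_{\mathbb W} \leq \sum_{i=1}^{m+1} |c_{N,i}|\, \|P v_N^{(i)} - v_N^{(i)}\|_{\mathbb W} \leq (m+1)\sup_i \|P v_N^{(i)} - v_N^{(i)}\|_{\mathbb W} \to 0,
\end{align*}
a contradiction. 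The reverse inequality $\mathrm{rank}(P_N) \geq m$ follows symmetrically by applying the dependence argument to a fixed $\mathbb W$-orthonormal basis of $\mathrm{ran}(P)$ and the operator $P_N$.

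The main obstacle is really the uniform nature of the convergence $Pv - v \to 0$ and $P_Nu - u \to 0$: without uniformity over the eigenfunctions involved, the combinations $c_{N,i}$ could concentrate on a direction where the convergence is slowest and the dependence argument breaks down. Hypothesis (4) is tailored precisely to deliver this uniformity, so the substantive work in concrete applications (such as $\mathcal L_N = \mathcal L_0 + \mathcal P_N \mathcal L_1$) will consist in verifying that hypothesis in the relevant function spaces.
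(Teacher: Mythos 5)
Your proof is correct and follows essentially the same route as the paper's: apply the second resolvent identity to eigenfunctions of $\mathcal L$ and $\mathcal L_N$ to show the Riesz spectral projections carry the respective eigenbases onto nearly orthonormal sets, then compare dimensions. The only cosmetic difference is that you use a single contour of radius $r\in(\delta,2\delta)$ while the paper uses the two circles $|z-\lambda|=\delta$ and $|z-\lambda|=2\delta$, which does not change the substance of the argument; you also spell out the linear-dependence step that the paper leaves implicit.
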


\begin{proof}
Let $m$ be the geometric multiplicity of $\lambda$ and let $n = n(N,\delta)$ be the sum of the geometric multiplicities of the eigenvalues $\lambda'$ of $\mathcal L_N$ satisfying $|\lambda' - \lambda| < \delta$. 
The spectral theorem gives
\begin{align*}
    \mathcal L u &= \sum_{j=1}^\infty \lambda_j \langle u, \varphi_j \rangle_{\mathbb W} \varphi_j,\\
    \mathcal L_N u &= \sum_{j=1}^\infty \lambda_j^{(N)} \langle u, \varphi_j^{(N)} \rangle_{\mathbb W} \varphi_j^{(N)}.
\end{align*}
Let $v_1,\ldots, v_m$ be an orthonormal basis for the eigenspace associated to $\lambda$.  Then consider the integrals
\begin{align*}
    v_{N,j}  &= \frac{1}{2 \pi\ii} \oint_{|z-\lambda| = \delta} (z \id - \mathcal L_N)^{-1} v_j \dd z,\\
     v_{j} &= \frac{1}{2 \pi\ii} \oint_{|z-\lambda| = \delta} (z \id - \mathcal L)^{-1} v_j \dd z, \quad j = 1,\ldots,m.
    \end{align*}    
    We write $(z \id - \mathcal L_N) - ( z \id - \mathcal L)  = (\mathcal L - \mathcal L_N)$, giving, for $N > N_0$ the second resolvent identity
    \begin{align*}
        (z \id - \mathcal L)^{-1} - (z \id - \mathcal L_N)^{-1} = (z \id - \mathcal L_N)^{-1} (\mathcal L - \mathcal L_N) (z \id - \mathcal L)^{-1}.
    \end{align*}
    This gives
    \begin{align*}
        (z \id - \mathcal L)^{-1} v_j - (z \id - \mathcal L_N)^{-1} v_j = (z \id - \mathcal L_N)^{-1} (\mathcal L - \mathcal L_N) \frac{v_j}{z - \lambda}.
    \end{align*}
    Since $|z - \lambda| = \delta$, under the stated assumptions, $\|v_j - v_{N,j}\|_{\mathbb W} \to 0$ as $N\to \infty$.  This shows that for sufficiently large $N$, $N > N_0'$
    \begin{align*}
        n(N,\delta) = \dim \ran \frac{1}{2 \pi\ii} \oint_{|z-\lambda| = \delta} (z \id - \mathcal L_N)^{-1} \dd z \geq m.
    \end{align*}
    Next, let $v_1^{(N)},\ldots, v_n^{(N)}$ be an orthonormal basis for the span of the eigenspaces for eigenvalues $\lambda_j^{(N)}$ of $\mathcal L_N$ satisfying $|\lambda - \lambda_j^{(N)}| < \delta$.  And then consider the integrals, for $N > N_0$
    \begin{align*}
        w_{N,j} &= \frac{1}{2 \pi\ii} \oint_{|z-\lambda| = 2\delta} (z \id - \mathcal L)^{-1} v_j^{(N)} \dd z.
    \end{align*}
    Then it follows that
    \begin{align*}
        (z \id - \mathcal L)^{-1} v_j^{(N)} - (z \id - \mathcal L_N)^{-1} v_j^{(N)} = (z \id - \mathcal L)^{-1} (\mathcal L_N - \mathcal L) \frac{v_j^{(N)}}{z - \lambda_j^{(N)}}. 
    \end{align*}
    Since $|z - \lambda_j^{(N)}| \geq \delta$, under the stated assumptions, $\|v_j^{(N)} - w_{N,j}\|_{\mathbb W} \to 0$ as $N\to \infty$.  This shows that for sufficiently large $N$, $N > N_0'$ (increasing $N_0'$ if necessary),
    \begin{align*}
        m = \dim \ran \frac{1}{2 \pi\ii} \oint_{|z-\lambda| = 2\delta} (z \id - \mathcal L)^{-1} \dd z \geq n(N,\delta).
    \end{align*}    
    This proves the claim.
    
\end{proof}


With the previous theorem in hand, we can now look to estimate the rate of convergence.  The following will apply when $\mathcal L_N$ is normal, and, in particular, self-adjoint.  

\begin{theorem}\label{t:sa-rate}
Suppose the assumptions of Theorem~\ref{t:sa-mult}. 
Assume further that there exists constants $t, N_0 > 0$ and a continuous function $R: [0,\infty) \to [0,\infty)$ such that
\begin{align*}
    \|(z \id - \mathcal L_N)^{-1} (\mathcal L - \mathcal L_N) u\|_{\mathbb W} &\leq R(|\lambda|) \max_{\mu \in \sigma(\mathcal L_N)} \frac{1}{|z - \mu|} N^{-t},
\end{align*}
for $N > N_0$ whenever $u$ is a normalized eigenfunction of $\mathcal L$ associated to an eigenvalue with modulus at most $|\lambda|$.  Then if $\lambda_N$ is an eigenvalue of $\mathcal L_N$ such that $|\lambda_N - \lambda| < \delta$ for all $N$ sufficiently large then there exists $C, N_0' > 1$ such that
\begin{align}
    |\lambda - \lambda_N| \leq  C R(|\lambda|) N^{-t}, 
\end{align}
for $N > N_0'$.
\end{theorem}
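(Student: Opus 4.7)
The plan is to reduce the resolvent-weighted hypothesis to the clean operator-norm estimate $\|(\mathcal L - \mathcal L_N) u\|_{\mathbb W} \leq R(|\lambda|) N^{-t}$ for normalized $\mathcal L$-eigenfunctions $u$ with eigenvalue $\lambda$, and then to combine this with the standard self-adjoint eigenvalue identity after using the proof of Theorem~\ref{t:sa-mult} to guarantee that any $\lambda_N$-eigenfunction of $\mathcal L_N$ has non-trivial overlap with the $\lambda$-eigenspace of $\mathcal L$.

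First I would extract the norm bound. Let $u$ be a normalized eigenfunction of $\mathcal L$ with eigenvalue $\lambda$, and expand $(\mathcal L - \mathcal L_N) u = \sum_j a_j^{(N)} \varphi_j^{(N)}$ in an orthonormal eigenbasis $\{\varphi_j^{(N)}\}$ of $\mathcal L_N$ (assumption (1) of Theorem~\ref{t:sa-mult}), so $\sum_j |a_j^{(N)}|^2 = \|(\mathcal L - \mathcal L_N) u\|_{\mathbb W}^2$. Since the $\lambda_j^{(N)}$ are real, for $z = \ii s$ with $s > 0$ we have $\max_{\mu \in \sigma(\mathcal L_N)} |\ii s - \mu|^{-1} \leq s^{-1}$, and the hypothesis becomes
\begin{equation*}
\sum_j \frac{|a_j^{(N)}|^2}{s^2 + (\lambda_j^{(N)})^2} \leq \frac{R(|\lambda|)^2 N^{-2t}}{s^2}.
\end{equation*}
Multiplying by $s^2$ and letting $s \to \infty$ (monotone convergence, since each summand is non-decreasing in $s$ with limit $|a_j^{(N)}|^2$) yields $\|(\mathcal L - \mathcal L_N) u\|_{\mathbb W}^2 \leq R(|\lambda|)^2 N^{-2t}$.

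Next, fix $\lambda_N$ with $|\lambda_N - \lambda| < \delta$ and let $v^{(N)}$ be a normalized eigenfunction. Let $P$ denote the spectral projector of $\mathcal L$ onto its $\lambda$-eigenspace; by assumption (3) of Theorem~\ref{t:sa-mult}, $\lambda$ is the only $\mathcal L$-eigenvalue enclosed by $|z-\lambda| = 2\delta$, so $Pv^{(N)} = \tfrac{1}{2\pi\ii}\oint_{|z-\lambda|=2\delta}(z\id - \mathcal L)^{-1} v^{(N)} \dd z$. Repeating the calculation from the proof of Theorem~\ref{t:sa-mult} with $(z\id - \mathcal L_N)^{-1} v^{(N)} = v^{(N)}/(z - \lambda_N)$ and the second resolvent identity gives
\begin{equation*}
v^{(N)} - Pv^{(N)} = -\frac{1}{2\pi\ii}\oint_{|z-\lambda| = 2\delta} \frac{(z\id - \mathcal L)^{-1}(\mathcal L - \mathcal L_N) v^{(N)}}{z - \lambda_N} \dd z;
\end{equation*}
since $|z - \lambda_N| \geq \delta$ on the contour, the hypothesis of Theorem~\ref{t:sa-mult} forces this to tend to $0$, hence $\|Pv^{(N)}\|_{\mathbb W} \to 1$. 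Setting $u := Pv^{(N)}/\|Pv^{(N)}\|_{\mathbb W}$ yields a normalized $\mathcal L$-eigenfunction with eigenvalue $\lambda$, and $P = P^*$, $P^2 = P$ give $\langle v^{(N)}, u\rangle_{\mathbb W} = \|Pv^{(N)}\|_{\mathbb W}$. Self-adjointness of both $\mathcal L$ and $\mathcal L_N$ then produces the identity $(\lambda - \lambda_N)\langle v^{(N)}, u\rangle_{\mathbb W} = \langle v^{(N)}, (\mathcal L - \mathcal L_N) u\rangle_{\mathbb W}$, so Cauchy--Schwarz together with the first step yields $|\lambda - \lambda_N| \leq \|(\mathcal L - \mathcal L_N) u\|_{\mathbb W}/\|Pv^{(N)}\|_{\mathbb W} \leq 2 R(|\lambda|) N^{-t}$ whenever $\|Pv^{(N)}\|_{\mathbb W} \geq 1/2$.

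The main obstacle is the first step --- converting the resolvent-weighted hypothesis into an operator-norm bound --- which relies on self-adjointness of $\mathcal L_N$ to control $\max_{\mu \in \sigma(\mathcal L_N)} 1/|\ii s - \mu|$ by $1/s$ uniformly in $N$ (even when $\sigma(\mathcal L_N)$ is unbounded), and on the spectral theorem to justify the passage $s \to \infty$. The remaining steps follow closely the pattern of the proof of Theorem~\ref{t:sa-mult}.
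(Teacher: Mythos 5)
Your proof is correct and takes a genuinely different route from the paper's. The paper works directly with the resolvent-weighted hypothesis on an $N$-dependent shrinking circle $|z-\lambda|=\delta'$, where $\delta'$ is scaled proportionally to $\max_j|\lambda-\lambda_j^{(N)}|$ and a radius $r_N$ is chosen to thread between the eigenvalues of $\mathcal L_N$; it then runs a rank-counting contradiction: if all $\|v_{N,j}-v_j\|$ were small, the projector over $|z-\lambda|=\delta'$ would have rank at least $m$, but at least one $\lambda_j^{(N)}$ lies outside, forcing rank $<m$. You instead (a) extract the clean norm bound $\|(\mathcal L-\mathcal L_N)u\|\leq R(|\lambda|)N^{-t}$ from the resolvent-weighted hypothesis by evaluating along $z=\ii s$ and passing $s\to\infty$ (using self-adjointness of $\mathcal L_N$ both to control $\max_\mu|\ii s-\mu|^{-1}\leq s^{-1}$ and to justify the monotone-convergence limit in the eigenbasis), then (b) apply the classical self-adjoint perturbation identity $(\lambda-\lambda_N)\langle v^{(N)},u\rangle=\langle v^{(N)},(\mathcal L-\mathcal L_N)u\rangle$ together with a lower bound $\|Pv^{(N)}\|\to1$ deduced from assumption (4) of Theorem~\ref{t:sa-mult} on the fixed contour $|z-\lambda|=2\delta$. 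Both proofs tacitly use the hypothesis for $z$ beyond the annulus $\delta\leq|z-\lambda|\leq2\delta$: the paper needs it on the tiny circle $|z-\lambda|=\delta'$, while you need it on the imaginary axis for large $|s|$; given that in the application (Theorem~\ref{t:spectrum-main}) the estimate is in fact derived from a $z$-independent bound on $\|(\mathcal L-\mathcal L_N)u\|$, both usages are warranted. Your approach is more elementary and yields a cleaner constant ($C=2$ once $\|Pv^{(N)}\|\geq1/2$), at the modest cost of the preliminary extraction step, whereas the paper's argument simultaneously localizes \emph{all} the nearby $\lambda_j^{(N)}$ and avoids needing to normalize $Pv^{(N)}$.
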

\begin{proof}
In this proof we reuse the notation of Theorem~\ref{t:sa-mult}.  
Consider the rescaled points
\begin{align*}
    z_j^{(N)} = \frac{\lambda_j^{(N)} - \lambda}{\max_j |\lambda_j^{(N)} - \lambda|},
\end{align*}
for values of $N$ such that the denominator does not vanish.  The maximum is taken over $j$ such that $|\lambda_j(N) - \lambda| < \delta$.  If the denominator does vanish, our estimate will hold trivially.  For sufficiently large $N$, by Theorem~\ref{t:sa-mult}, there are at most $m$ of points.  Thus there exists radii $r_N >0$ satisfying
\begin{align*}
    \gamma \leq r_N \leq 1 - \gamma, \quad | r_N - |z_j^{(N)}|| > \gamma,
\end{align*}
for some fixed $0 < \gamma < 1/2$.  Set 
\begin{align*}
    \delta' = r_N \max_j |\lambda- \lambda_j^{(N)}|.
\end{align*}
This is chosen so that any $z$ satisfying $|z- \lambda| = \delta'$ is a distance at least $\gamma \max_j |\lambda- \lambda_j^{(N)}|$ from $\sigma(\mathcal L_N)$.

Note that at least one of the $\lambda_j^{(N)}$'s is outside the disk $\{ z \in \mathbb C : |z- \lambda| \leq \delta '\}$.  
We then estimate
\begin{align*}
    \| v_{N,j} - v_j\|_{\mathbb W} &\leq \frac{\delta'}{2 \pi } \max_{|z - \lambda| = \delta'} \frac{\|(z \id - \mathcal L_N)^{-1} (\mathcal L - \mathcal L_N) v_j\|_W}{|z - \lambda|}\\
    & \leq R(|\lambda|) N^{-t} \max_{|z - \lambda| = \delta'}\max_{\mu \in \sigma(\mathcal L_N)}\frac{1}{|z - \mu|}\\
    & \leq  \frac{R(|\lambda|) N^{-t}}{\gamma \max_j |\lambda- \lambda_j^{(N)}|}.
\end{align*}
By the orthogonality of the eigenfunctions,  there exists a constant $c_m > 0$ such that if
\begin{align*}
     \| v_{N,j} - v_j\|_{\mathbb W} &\leq c_m, \quad j = 1,2,\ldots,m,
\end{align*}
then the rank of the projector
\begin{align*}
    \frac{1}{2 \pi \ii} \oint_{|z-\lambda| = \delta'} (z \id - \mathcal L_N)^{-1} \dd z,
\end{align*}
is at least $m$.   We know that, for sufficiently large $N$, the rank must be less than $m$, and therefore
\begin{align*}
    \frac{R(|\lambda|) N^{-t}}{\gamma \max_j |\lambda- \lambda_j^{(N)}|} \geq c_m, \quad N > N_0'.
\end{align*}
Therefore
\begin{align*}
    \max_j |\lambda- \lambda_j^{(N)}| \leq \frac{R(|\lambda|) N^{-t}}{\gamma c_m}, \quad N > N_0'.
\end{align*}
\end{proof}

We now apply the last few results to understand the eigenvalues of $\mathcal L_N' := \mathcal L_0 + \mathcal P_N \mathcal L_1$ restricted to $\ran \mathcal P_N$. We cannot apply them directly because $\mathcal L_N'$ is not self-adjoint on the entire space.  Instead, we consider (with some abuse of notation) $\mathcal L_N := \mathcal L_0 + \mathcal P_N \mathcal L_1 \mathcal P_N$, which is self-adjoint if both $\mathcal L_0, \mathcal L_1$ are.

\begin{theorem}\label{t:spectrum-main}
Suppose $\mathcal L$ is given by \eqref{eq:diff_op}.  Suppose further that $\lambda$ is an eigenvalue of $\mathcal L$ and $\delta > 0$ is such that $\lambda$ is a distance at least $3\delta$ from all other eigenvalues of $\mathcal L$.  There exists $N_0(\delta,\lambda)$ such that for $N > N_0$:
\begin{itemize} 
\item The sum of the geometric multiplicities of the eigenvalues of $\mathcal L_N$ that lie within in the disk $\{z \in \mathbb C: |z - \lambda| < \delta\}$ is equal to the geometric multiplicity of $\lambda$.
\item If $\lambda_N$ is an eigenvalue of $\mathcal L_N$ satisfying $|\lambda - \lambda_N| < \delta$ then
\begin{align*}
|\lambda - \lambda_N| \leq C N^{-\ell},
\end{align*}
for a constant $C > 0$.
\end{itemize}
\end{theorem}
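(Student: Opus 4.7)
The plan is to apply Theorem~\ref{t:sa-mult} and Theorem~\ref{t:sa-rate} with $\mathbb W = L^2(\mathbb T)$, $\mathbb V = H^k(\mathbb T)$, and $\mathcal L_N = \mathcal L_0 + \mathcal P_N \mathcal L_1 \mathcal P_N$.  Self-adjointness of $\mathcal L$ forces $\mathcal L_0$ and $\mathcal L_1$ to be symmetric on $L^2(\mathbb T)$, and because $\mathcal P_N$ is an $L^2$-orthogonal projection, the sandwich $\mathcal P_N \mathcal L_1 \mathcal P_N$ is also symmetric, so $\mathcal L_N$ is self-adjoint on $L^2(\mathbb T)$ with domain $H^k(\mathbb T)$.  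Both $\mathcal L$ and $\mathcal L_N$ have compact resolvents (through $\mathcal N = (\mathcal L_0 - \zeta \id)^{-1}$) and hence pure-point spectra with eigenvalues of finite multiplicity.  On $(\id - \mathcal P_N) H^k(\mathbb T)$ the operator $\mathcal L_N$ acts as $\mathcal L_0$, whose eigenvalues there leave any bounded set as $N \to \infty$; only the finite-dimensional restriction $\mathcal L_N|_{\ran \mathcal P_N}$ can contribute eigenvalues near $\lambda$.

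The key step is the decomposition
\begin{align*}
    \mathcal L - \mathcal L_N = (\id - \mathcal P_N) \mathcal L_1 + \mathcal P_N \mathcal L_1 (\id - \mathcal P_N),
\end{align*}
which, via Theorem~\ref{t:proj-Hs} together with the boundedness $\mathcal L_1 : H^{p+\ell} \to H^{\ell}$ and $\mathcal L_1 : H^{p} \to L^2$ (both inherited from $a_j \in H^\ell$ with $\ell > 1/2$), yields
\begin{align*}
    \|(\mathcal L - \mathcal L_N) w\|_{0} \leq C N^{-\ell} \|w\|_{p + \ell}.
\end{align*}
Applied to an $L^2$-normalized eigenfunction $u$ of $\mathcal L$, Proposition~\ref{p:ef-est} gives $\|u\|_{p+\ell} \leq C_{p+\ell}(|\lambda|+2)^{\lceil (p+\ell)/(k-p)\rceil}$, and Remark~\ref{rem:proj_ef_est} provides the same bound for eigenfunctions $v$ of $\mathcal L_N$ with eigenvalue in a bounded disk around $\lambda$.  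Together with the self-adjoint resolvent identity $\|(z\id - \mathcal L)^{-1}\|_0 = \mathrm{dist}(z,\sigma(\mathcal L))^{-1}$ (and analogously for $\mathcal L_N$), one obtains, uniformly on the annulus $\delta \leq |z-\lambda| \leq 2\delta$,
\begin{align*}
    \|(z\id - \mathcal L)^{-1}(\mathcal L - \mathcal L_N) v\|_0 + \|(z\id - \mathcal L_N)^{-1}(\mathcal L - \mathcal L_N) u\|_0 = O(N^{-\ell}),
\end{align*}
which verifies hypothesis (4) of Theorem~\ref{t:sa-mult}.

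Invertibility of $z \id - \mathcal L_N$ on the annulus for large $N$ follows by adapting Theorem~\ref{t:eval-gen}(1) from $\mathcal L_0 + \mathcal P_N \mathcal L_1$ to the symmetrized $\mathcal L_0 + \mathcal P_N \mathcal L_1 \mathcal P_N$ (the argument carries over with the extra $\mathcal P_N$ absorbed into the norm estimates).  With all hypotheses in place, Theorem~\ref{t:sa-mult} delivers the multiplicity statement.  For the rate, the bound above is exactly of the form assumed in Theorem~\ref{t:sa-rate} with $t = \ell$ and $R(|\lambda|) = C(|\lambda|+2)^{\lceil (p+\ell)/(k-p)\rceil}$, so its conclusion gives $|\lambda - \lambda_N| \leq C N^{-\ell}$.

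The main obstacle is the uniform control of $(z\id - \mathcal L_N)^{-1}$ on the annulus as $N \to \infty$: a priori an approximate eigenvalue of $\mathcal L_N$ could drift toward the inner boundary $|z-\lambda| = \delta$, inflating the resolvent norm and spoiling the rate.  This is precisely what the rescaling trick inside the proof of Theorem~\ref{t:sa-rate} resolves, by choosing the contour radius $\delta'$ so that it remains a fixed fraction $\gamma$ away from the cluster of approximate eigenvalues; importing that device is what turns the $O(N^{-\ell})$ operator estimate into the clean eigenvalue bound $|\lambda - \lambda_N| \leq C N^{-\ell}$.
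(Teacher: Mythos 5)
Your overall plan matches the paper's: set $\mathcal L_N = \mathcal L_0 + \mathcal P_N \mathcal L_1 \mathcal P_N$, observe it is self-adjoint, and verify the hypotheses of Theorems~\ref{t:sa-mult} and \ref{t:sa-rate}. The decomposition $\mathcal L - \mathcal L_N = (\id - \mathcal P_N)\mathcal L_1 + \mathcal P_N \mathcal L_1(\id - \mathcal P_N)$, the appeal to Proposition~\ref{p:ef-est} and Remark~\ref{rem:proj_ef_est}, and the resulting $R(|\lambda|)\,N^{-\ell}$ bound are exactly what the paper uses, and the observation that eigenfunctions of $\mathcal L_N$ with eigenvalues near $\lambda$ lie in $\ran\mathcal P_N$ (so $\mathcal P_N\mathcal L_1\mathcal P_N$ may be replaced by $\mathcal P_N\mathcal L_1$ on them) mirrors the paper's orthogonality argument around \eqref{eq:orth=}.

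Your last paragraph, however, misattributes where the uniform invertibility of $z\id - \mathcal L_N$ on the fixed annulus $\delta \leq |z - \lambda| \leq 2\delta$ comes from. That invertibility, together with the \emph{uniform} convergence in $z$, is hypothesis (4) of Theorem~\ref{t:sa-mult}; it must be established \emph{before} invoking that theorem and cannot be supplied by the rescaling device inside the proof of Theorem~\ref{t:sa-rate}. The rescaling (choosing a sub-contour of radius $\delta'$ at a fixed relative distance $\gamma$ from the eigenvalue cluster) only refines the rate once the multiplicity statement is already in hand. The paper handles the annulus invertibility by applying Theorem~\ref{t:diff-conv} with $s = k$ to $\mathcal L - z\id$, noting that $\mathcal N$ is $z$-independent and that $z \mapsto \|(z\id - \mathcal L)^{-1}\|_{0\to k}$ is continuous, hence bounded, on the compact annulus; this gives a uniform $N$-threshold. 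Your ``adapting Theorem~\ref{t:eval-gen}(1)'' gesture is in the right spirit but glosses over the explicit check. That said, this is a patchable imprecision rather than a fatal gap, and the proposal is otherwise the paper's argument.
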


\begin{proof}

First consider
\begin{align}\label{eq:begin}
    \|( z \id - \mathcal L)^{-1} (\mathcal L_1 - \mathcal P_N \mathcal L_1 \mathcal P_N) v\|_0
\end{align}
where $v$ is an eigenfunction of $\mathcal L_N$.  Note that if $u \in \ran \mathcal P_N$ we can replace $\mathcal P_N \mathcal L_1 \mathcal P_N$ with $\mathcal P_N \mathcal L_1$.  To this end, suppose $\mu$ is an eigenvalue of $\mathcal L_N$:
\begin{align*}
    (\mu \id - \mathcal L_0 - \mathcal P_N \mathcal L_1\mathcal P_N) v = 0, \quad v \neq 0.
\end{align*}
Decompose $v = v_1 + v_2$, $v_1 = \mathcal P_N v$, $v_2 = (\id - \mathcal P_N) v$.  Then
\begin{align}\label{eq:orth=}
    -(\mu \id - \mathcal L_0) v_2  = (\mu \id - \mathcal L_0 - \mathcal P_N \mathcal L_1\mathcal P_N) v_1.  
\end{align}
The left and right-hand sides are orthogonal, and therefore they both must be zero.  Furthermore if $|\mu| = o(N^{k})$, then $\mu \id - \mathcal L_0$ will be invertible on $\ran (\id - \mathcal P_N)$, for sufficiently large $N$, implying that $v_2 = 0$.  Thus, in this regime, for $z \not\in \sigma(\mathcal L)$ and $t > p$,
\begin{align*}
    \|( z \id - \mathcal L)^{-1} (\mathcal L_1 - \mathcal P_N \mathcal L_1 \mathcal P_N) v\|_0 &= \|( z \id - \mathcal L)^{-1} (\mathcal L_1 - \mathcal P_N \mathcal L_1 ) v\|_0 \\
    &\leq \|(z \id - \mathcal L)^{-1}\|_0 \|\id -\mathcal P_N\|_{t \to 0} \|\mathcal L_1\|_{t +p \to t} \|v\|_{t+p}.
\end{align*}
This final norm is finite, by Proposition~\ref{p:ef-est}, if $t = \ell$:
\begin{align*}
    \|v\|_{\ell+p} \leq C_{t + p} ( 2 + |\lambda|)^{\lceil \frac{ \ell}{k -p} \rceil}.
\end{align*}

Combining the bounds,
\begin{align*}
    \|( z \id - \mathcal L)^{-1} (\mathcal L_1 - \mathcal P_N \mathcal L_1 \mathcal P_N) v\|_0 \leq \|(z \id - \mathcal L)^{-1}\|_0 \underbrace{D_{t,0} \|\mathcal L_1\|_{t +p \to t} C_{t + p} ( 2 + |\lambda|)^{\lceil \frac{ \ell}{k -p} \rceil}}_{R_1(|\lambda|)} N^{-t}.
\end{align*}
This can then be bounded uniformly in $z$, $|z - \lambda| = \delta$, $\lambda \in \sigma(\mathcal L)$, using  $\|(z \id - \mathcal L)^{-1}\|_0 = \delta^{-1}$ provided that $\lambda$ is a distance at least $2 \delta$ from all other elements of $\sigma(\mathcal L)$.

Next, for a normalized eigenfunction $u$ of $\mathcal L$, we estimate
\begin{align*}
    \|( z \id - \mathcal L_N)^{-1} (\mathcal L_1 - \mathcal P_N \mathcal L_1 \mathcal P_N) u\|_0 \leq \|( z \id - \mathcal L_N)^{-1}\|_0 \|(\mathcal L_1 - \mathcal P_N \mathcal L_1 \mathcal P_N) u\|_0.
\end{align*}
By self-adjointness, we have
\begin{align*}
    \|( z \id - \mathcal L_N)^{-1}\|_0 = \max_{\mu \in \sigma(\mathcal L_N)} \frac{1}{|z - \mu|}.
\end{align*}
Then
\begin{align*}
    \|(\mathcal L_1 - \mathcal P_N \mathcal L_1 \mathcal P_N) u\|_0 \leq \|(\mathcal L_1 - \mathcal P_N \mathcal L_1) u\|_0 + \|(\mathcal P_N\mathcal L_1 - \mathcal P_N \mathcal L_1 \mathcal P_N) u\|_0.
\end{align*}
The first term can be estimated as above.  For the second term,
\begin{align*}
    \|(\mathcal P_N\mathcal L_1 - \mathcal P_N \mathcal L_1 \mathcal P_N) u\|_0 &\leq \|\mathcal L_1 (\id - \mathcal P_N) u\|_0 \leq \|\mathcal L_1\|_{p \to 0} \|(\id - \mathcal P_N) u\|_p\\
    & \leq \|\mathcal L_1\|_{p \to 0} D_{t +p, p} N^{-t} \|u\|_{p+t}\\
    & \leq \underbrace{\|\mathcal L_1\|_{p \to 0} D_{t +p, p}C_{t + p} ( 2 + |\lambda|)^{\lceil \frac{ \ell}{k -p} \rceil}}_{R_2(|\lambda|)}  N^{-t}.
\end{align*}

The only assumption of Theorem~\ref{t:sa-mult} that remains to be established is that $z \id - \mathcal L_N$ is invertible for $\delta \leq |z - \lambda| \leq 2 \delta$ for $N$ sufficiently large, independently of $z$.  To this end, we apply Theorem~\ref{t:diff-conv} with $s = k$ and  $\mathcal L$ replaced with $\mathcal L - z \id$.  Since $\mathcal N$ is independent of $z$, we obtain the condition on $N$ 
\begin{align*}
    \frac 1 2 D_{k-p,k-k}^{-1} N^{k -p} > \|\mathcal N\|_{0\to k}\|\mathcal (z \id - \mathcal L)^{-1}\|_{0 \to k} \|\mathcal N^{-1}\|_{k \to 0}  \|\mathcal L_1\|_{k\to k-p}
\end{align*}
Then for $\delta \leq |z - \lambda| \leq 2 \delta$,
by the Open Mapping Theorem, $(z \id - \mathcal L)^{-1}$ is bounded from $L^2(\mathbb T) \to H^k(\mathbb T)$. Then
\begin{align*}
z \mapsto \|(z \id - \mathcal L)^{-1}\|_{0\to k},
\end{align*}
is a bounded, continuous function for $\delta \leq |z - \lambda| \leq 2 \delta$ and is therefore uniformly bounded. 

This implies that Theorems~\ref{t:sa-mult} and \ref{t:sa-rate} hold for $\mathcal L$, $\mathcal L_N$, and any fixed $\delta$ sufficiently small. Further, $R(|\lambda|)$ in Theorem~\ref{t:sa-rate} can be given by $R(|\lambda|) = 2R_1(|\lambda|) + R_2(|\lambda|)$.  
\end{proof}

\begin{remark}
    In the proof of the previous theorem, if an explicit $\lambda$-dependent bound on $$\|(z \id - \mathcal L)^{-1}\|_{0 \to k},$$ could be established, then a choice for $N_0$ could be made fully explicitly.  Furthermore, we expect the constant $C$ in this theorem to grow no worse than
    \begin{align*}
    |\lambda|^{\lceil \frac{\ell}{k -p}\rceil},
    \end{align*}    
    but this is likely quite pessimistic.
\end{remark}

On gap remains:  What do the eigenvalues of $\mathcal L_0 + \mathcal P_N \mathcal L_1$ restricted to $\ran \mathcal P_N$ have to do with those of $\mathcal L_N$? Indeed, provided that the eigenvalue under consideration is not too large, measured relative to $N$, the eigenvalues coincide.  

\begin{theorem}
    For any $c>0$,
\begin{align*}
    \sigma(\mathcal L_0 + \mathcal P_N \mathcal L_1; \ran P_N) \cap \{ z \in \mathbb C : |z| \leq c N^{k-1}\} = \sigma(\mathcal L_N; L^2(\mathbb T)) \cap \{ z \in \mathbb C : |z| \leq c N^{k-1}\},
\end{align*}
for sufficiently large $N$.
\end{theorem}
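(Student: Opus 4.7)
The plan is to exploit the fact that $\mathcal L_0$ is diagonal in the Fourier basis, so both $\ran \mathcal P_N$ and its $L^2$-orthogonal complement $\ran(\id - \mathcal P_N)$ are invariant under $\mathcal L_N = \mathcal L_0 + \mathcal P_N \mathcal L_1 \mathcal P_N$. On the first subspace $\mathcal L_N$ coincides with $\mathcal L_0 + \mathcal P_N \mathcal L_1$ (because $\mathcal P_N$ acts as the identity there), and on the second $\mathcal L_N$ reduces to $\mathcal L_0$ alone (because $\mathcal P_N$ annihilates the complement and $\mathcal L_0$ preserves Fourier modes). This gives a block decomposition
\begin{align*}
    \mathcal L_N \;=\; \bigl( \mathcal L_0 + \mathcal P_N \mathcal L_1 \bigr)\big|_{\ran \mathcal P_N} \;\oplus\; \mathcal L_0\big|_{\ran(\id - \mathcal P_N)}.
\end{align*}
Since each summand is closed and Fourier-block-diagonal, constructing $(z\id - \mathcal L_N)^{-1}$ reduces to inverting each block separately, and therefore
\begin{align*}
    \sigma(\mathcal L_N; L^2(\mathbb T)) = \sigma\bigl(\mathcal L_0 + \mathcal P_N \mathcal L_1; \ran \mathcal P_N\bigr) \cup \sigma\bigl(\mathcal L_0; \ran(\id - \mathcal P_N)\bigr).
\end{align*}

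Next I bound the second set from below. The operator $\mathcal L_0$ acts on $e_n$ as multiplication by the Fourier symbol $P(n) := \sum_{j=q}^{k} c_j (\ii n)^j$, which is a polynomial of degree $k$ with leading coefficient $c_k \neq 0$. Restricted to $\ran(\id - \mathcal P_N)$, only modes with $|n| \geq N_+ + 1$ appear. Using $N_+ + 1 \geq N/2$ and the fact that $|P(n)| \geq \tfrac{1}{2}|c_k||n|^k$ for $|n|$ sufficiently large, every element of $\sigma(\mathcal L_0;\ran(\id-\mathcal P_N))$ has modulus at least $C_{k,c_k} N^k$ for all sufficiently large $N$, which eventually exceeds $c N^{k-1}$. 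Hence
\begin{align*}
    \sigma\bigl(\mathcal L_0; \ran(\id - \mathcal P_N)\bigr) \cap \{z \in \mathbb C : |z| \leq c N^{k-1}\} = \emptyset,
\end{align*}
for $N$ large.

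Intersecting the spectral decomposition with $\{|z| \leq cN^{k-1}\}$ kills the second term in the union and leaves exactly the desired equality. The only non-routine point is the block-diagonal spectral decomposition of an unbounded operator; this is the place to spend care, though it is essentially immediate here because $\mathcal L_0$ is a Fourier multiplier on $\ran(\id-\mathcal P_N)$ whose resolvent is given by an explicit multiplier formula, and the other block is finite-dimensional, so $(z\id-\mathcal L_N)^{-1}$ can be assembled coordinate-wise. Everything else is a size estimate on a polynomial in $n$.
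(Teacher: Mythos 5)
Your proof is correct and is essentially the paper's argument made explicit. The paper's one-line proof invokes the earlier orthogonality argument around \eqref{eq:orth=}: for an eigenpair $(\mu,v)$ of $\mathcal L_N$ with $|\mu| = o(N^k)$, decompose $v = v_1 + v_2$ into $\ran\mathcal P_N$ and its complement; the eigenvalue equation splits into two orthogonal pieces, and invertibility of $\mu\id - \mathcal L_0$ on $\ran(\id - \mathcal P_N)$ forces $v_2 = 0$. Your version packages the same facts as a block-diagonal decomposition of $\mathcal L_N$ with respect to $\ran\mathcal P_N \oplus \ran(\id - \mathcal P_N)$, plus the estimate $|P(n)| \gtrsim N^k$ on the tail block, which is exactly what makes the paper's "invertible on $\ran(\id - \mathcal P_N)$" claim quantitative. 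Same mechanism, somewhat more transparent exposition.
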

\begin{proof}
    Indeed, if $\lambda = O(N^{k-1})$ then following the argument that led to $v_2 = 0$ in \eqref{eq:orth=} we establish the result.
\end{proof}

\begin{remark}
    Our numerical experiments demonstrate that the exponent of $\ell$ in Theorem~\ref{t:spectrum-main} is not optimal.  The true rate appears to be closer to $2 \ell + k$. To obtain better rates, one coulde replace the $L^2(\mathbb T) = H^0(\mathbb T)$ norm in \eqref{eq:begin} with the weakest possible norm for which $(z \id - \mathcal L)^{-1}$ is bounded.  If this can be taken to be $H^{-\ell - k}(\mathbb T)$ then something similar to $2 \ell + k$ will arise.  This is an interesting question we leave for future work.
\end{remark}

\section{Riemann--Hilbert problems on the circle}\label{sec:rhp}

We consider the problem of finding a sectionally analytic function $\phi: \mathbb C \setminus \mathbb U \to \mathbb C$ that satisfies the jump condition
\begin{align}\label{eq:rhp}
    \phi^+(z) =  \phi^-(z) g(z), \quad z \in \mathbb U,  \quad \phi^\pm(z) = \lim_{\epsilon \to 0^+} \phi(z (1 \mp \epsilon)),
\end{align}
along with the asymptotic condition
\begin{align*}
    \lim_{z \to \infty} \phi(z) = 1.
\end{align*}
This is called a Riemann--Hilbert (RH) problem. In principle, one has to be precise about the sense in which these limits exist.  For simplicity, we require solutions to be of the form
\begin{align} \label{eq:solnform}
    \phi(z) = 1 + \frac{1}{2 \pi \ii} \oint_{\mathbb U} \frac{u(z')}{z' - z}\dd z' := 1 + \mathcal C u(z)
\end{align}
for $u \in L^2(\mathbb U)$.  We then define the boundary values
\begin{align*}
    \mathcal C^\pm u(z) = \lim_{\epsilon \to 0^+} \mathcal C u(z (1 \mp \epsilon)),
\end{align*}
which exist for a.e. $z \in \mathbb U$ \cite{Duren}.  For $u \in L^2(\mathbb U)$,
\begin{align*}
    u(z) = \sum_{j=-\infty}^\infty u_j z^j,
\end{align*}
Then for a.e. $z \in \mathbb U$,
\begin{align*}
    \mathcal C^+ u(z) = \sum_{j = 1}^\infty u_j z^j, \quad \mathcal C^- u(z) = -\sum_{j=-\infty}^{-1} u_j z^j,
\end{align*}
and therefore $\|\mathcal C^\pm\|_s = 1$ for all $s$.   Thus, it follows that a solution of the RH problem of the form \eqref{eq:solnform} exists if and only if the singular integral equations (SIE)
\begin{align}\label{eq:SIE}
    \mathcal C^+ u -  (\mathcal C^- u) g = g - 1,
\end{align}
has a solution.  From here, we will assume, at a minimum, that $g \in H^1(\mathbb U)$, so that the singular integral operator
\begin{align*}
    \mathcal C[g] u := \mathcal C^+ u -  (\mathcal C^- u) g,
\end{align*}
is bounded on $L^2(\mathbb U)$. It will be convenient at times in what follows to think of multiplication by a function as a bi-infinite matrix so we define the multiplication operator $\mathcal M(g)$ by
\begin{align*}
    \mathcal M(g) u  := g u.
\end{align*}

Supposing that $|g| > 0$ on $\mathbb U$, it follows that the operator $\mathcal C[g^{-1}]$ is a Fredholm regulator for $\mathcal C[g]$.  Indeed,
\begin{align*}
    \mathcal S[g^{-1},g] u &= \mathcal C[g^{-1}]\mathcal C[g]u  = \mathcal C[g^{-1}] (\mathcal C^+ u -  (\mathcal C^- u) g)\\
    & = \mathcal C^+ (\mathcal C^+ u -  (\mathcal C^- u) g) - (\mathcal C^-(\mathcal C^+ u -  (\mathcal C^- u) g)) g^{-1}.
\end{align*}
We then use the facts that $\mathcal C^\pm\mathcal C^\pm = \pm \mathcal C^\pm$, and $\mathcal C^+ - \mathcal C^- = \id$ to find
\begin{align*}
    \mathcal S[g^{-1},g] u & = \mathcal C^+ u -  \mathcal C^+ ((\mathcal C^- u) g) + \mathcal C^-( (\mathcal C^- u) g) g^{-1}\\
    & =  u -  \mathcal C^+ ((\mathcal C^- u) g) + \mathcal C^+( (\mathcal C^- u) g) g^{-1} \\
    & = u + \mathcal C^+( (\mathcal C^- u) g ) ( g^{-1} -1).
\end{align*}

We arrive at an important lemma. Its proof highlights a calculation that will be important in what follows.
\begin{lemma}\label{l:compact}
Suppose $h \in H^t(\mathbb U)$, $t > 1/2$. Then the operator
\begin{align*}
    \mathcal K^\pm(h):  u \mapsto \mathcal C^+ ( (\mathcal C^- u ) h) ,
\end{align*}
is compact on $H^s(\mathbb U)$, for $0 \leq s \leq t$.  Furthermore, if $0 \leq s \leq t$  then there exists a constant $B_{s,t} > 0$ such that if $s < 1/2$, and $h \in H^{t - s + 1}(\mathbb U)$ then
\begin{align*}
    \|\mathcal K^\pm(h)\|_{s \to  t} \leq B_{s,t} \|h\|_{t -s+1}
\end{align*}
and if $s \geq 1/2$, and $h \in H^{t}$ then
\begin{align*}
    \|\mathcal K^\pm(h)\|_{s \to  t} \leq B_{s,t} \|h\|_{t}.
\end{align*}
\end{lemma}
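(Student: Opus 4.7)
The plan is to pass to a Fourier matrix representation of $\mathcal{K}^\pm(h)$, bound the $H^s \to H^t$ operator norm by Cauchy--Schwarz together with a combinatorial sum estimate, and then deduce compactness either by approximating $h$ by trigonometric polynomials or by composing with a compact Sobolev embedding. The $\mathcal{K}^-$ case is symmetric to $\mathcal{K}^+$, so I treat the latter. Using $\mathcal{C}^- u(z) = -\sum_{\ell \geq 1} u_{-\ell} z^{-\ell}$, the Fourier convolution rule, and the mode projection $\mathcal{C}^+$, one checks that
\begin{align*}
    (\mathcal{K}^+(h) u)_k = -\sum_{\ell \geq 1} u_{-\ell}\, h_{k + \ell} \quad (k \geq 1),
\end{align*}
and $(\mathcal{K}^+(h) u)_k = 0$ for $k \leq 0$.

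Using the equivalent norm $\lvertiii \cdot \rvertiii_s$, I apply Cauchy--Schwarz to the $\ell$-sum with weights $(1+\ell)^{\pm s}$ to obtain
\begin{align*}
    |(\mathcal{K}^+(h) u)_k|^2 \leq \lvertiii u \rvertiii_s^2 \sum_{\ell \geq 1} |h_{k+\ell}|^2 (1+\ell)^{-2s}.
\end{align*}
Multiplying by $(1+k)^{2t}$, summing in $k$, and substituting $n = k+\ell$ reduces the task to estimating $S(n) := \sum_{k=1}^{n-1} (1+k)^{2t}(1+n-k)^{-2s}$. A dyadic split at $k = n/2$, together with the elementary estimates $\sum_{m=1}^{N}(1+m)^{-2s} = O(1)$ for $s > 1/2$ and $O(N^{1-2s})$ for $s < 1/2$, yields $S(n) \leq C(1+n)^{2t}$ in the first regime and $S(n) \leq C(1+n)^{2t-2s+1}$ in the second. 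Summing against $|h_n|^2$ then reproduces the stated bounds: $\|h\|_t$ for $s \geq 1/2$ and $\|h\|_{t-s+1}$ for $s < 1/2$.

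For compactness on $H^s(\mathbb{U})$ I would split into two cases. If $s < t$, pick $s' \in (s, t]$ small enough that the previous step supplies a bounded map $\mathcal{K}^\pm(h) : H^s(\mathbb{U}) \to H^{s'}(\mathbb{U})$ — possible precisely because $t > 1/2$ — and compose with the compact Sobolev embedding $H^{s'} \hookrightarrow H^s$. If $s = t$, necessarily $s > 1/2$, and I would approximate $h$ in $H^t$ by $h_N := \mathcal{P}_N h$; each $\mathcal{K}^\pm(h_N)$ is of finite rank (its matrix has only finitely many nonzero entries, since $h_{N,k+\ell} = 0$ for $k+\ell > N$), while $\|\mathcal{K}^\pm(h - h_N)\|_{s \to s} \leq C\|h - h_N\|_t \to 0$ by Theorem~\ref{t:proj-Hs}, so $\mathcal{K}^\pm(h)$ is an operator-norm limit of finite rank operators. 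The main obstacle is the combinatorial bookkeeping for $S(n)$ across the regimes $s > 1/2$, $s < 1/2$, and the borderline $s = 1/2$, where an $O(\log n)$ factor appears in $S(n)$. This is comfortably absorbed by the slack in the stated exponent $t - s + 1$ for $s < 1/2$ (the Cauchy--Schwarz argument actually delivers $t - s + 1/2$), but at $s = 1/2$ exactly one must either accept an arbitrarily small loss in smoothness or perform a slightly sharper paraproduct-style decomposition to arrive at the statement with $\|h\|_t$ on the nose.
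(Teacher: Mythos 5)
Your argument follows the same core route as the paper's proof: pass to the Hankel-type Fourier matrix representation $c_k = -\sum_{\ell \geq 1}h_{k+\ell}u_{-\ell}$, apply Cauchy--Schwarz with weights $(1+\ell)^{\pm s}$ in the $\ell$-sum, reindex over $n = k+\ell$, and reduce the operator bound to the combinatorial estimate of $S(n) = \sum_{k=1}^{n-1}(1+k)^{2t}(1+n-k)^{-2s}$. The paper estimates the analogous sum by comparison with an integral, you by a dyadic split, and both yield the stated exponents (you even observe the sharper $t-s+1/2$ in the $s<1/2$ regime). You are, however, more careful than the paper on two points. First, your compactness argument is cleaner and, strictly speaking, needed: for $s < t$ you use the norm estimate to obtain a bounded map $\mathcal{K}^\pm(h): H^s \to H^{s'}$ for some $s' > s$ (possible with only $h \in H^t$ precisely because $t > 1/2$) and compose with the compact embedding $H^{s'} \hookrightarrow H^s$, while for $s = t > 1/2$ you approximate $h$ by $\mathcal{P}_M h$ and invoke the $s \geq 1/2$ bound. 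The paper simply asserts that $\mathcal{K}^\pm(h)$ is a limit of finite-rank operators without specifying the norm; read as operator-norm convergence in $H^s \to H^s$ for $s < 1/2$, its own estimate would demand $h \in H^1$, which is not assumed when $1/2 < t < 1$. Second, you correctly flag the logarithmic loss at $s = 1/2$: the paper's claimed bound $\tfrac{1}{j}\sum_k (k/j)^{2(s+t)}(1-k/j)^{-2s} \leq B_{s,t}\, j^{2s-1}$ fails at $s = 1/2$ (the integral contributes a $\log j$ factor), so the stated conclusion $\|\mathcal{K}^\pm(h)\|_{1/2 \to t} \leq B\|h\|_t$ is not actually delivered by this Cauchy--Schwarz argument. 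This is immaterial downstream, since Theorem~\ref{t:rhp-1} restricts to $s \in \{0\}\cup(1/2,\infty)$, but it is a genuine soft spot in the lemma's proof at the single point $s = 1/2$.
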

\begin{proof}
The matrix representation of the operator $u \mapsto hu$ in the standard basis is given by 
\begin{align*}
   \mathcal M(h) \vec u  := \left[ \begin{array}{ccc|cccccc}
    \ddots & \ddots & \ddots & & & \iddots \\
    \ddots &  h_0 & h_{-1} & h_{-2} \\
    \ddots & h_1 & h_0 & h_{-1} & h_{-2} \\ \hline
    &h_2 & h_1 & h_0 & h_{-1} & \ddots \\
    &&h_2 & h_1 & h_0 & \ddots \\
   \iddots &&& \ddots & \ddots & \ddots
    \end{array} \right] \begin{bmatrix} \vdots \\ u_{-2} \\ u_{-1} \\ \hline u_0 \\ u_{1} \\ \vdots \end{bmatrix}.
\end{align*}
This matrix is blocked so that when the row just below the horizontal line multiplies the vector it gives the coefficient of $z^0$.  Since $\vec h$ gives the coefficients of $h$ in its Laurent expansion on the circle we identify the two operators $\mathcal M(h) = \mathcal M(\vec h)$.  In matrix form, the operator $\mathcal C^+ ( (\mathcal C^- u) h)$ is then given by
\begin{align*}
      \mathcal K^\pm(h) :=
    &  -\left[ \begin{array}{cccccc|ccc} 
    &&&&\ddots &&& \iddots\\
    &&&& & 0 & 0\\
    \hline
     & h_j & \cdots & \cdots &h_2&h_1 & 0 \\
   \iddots & & h_j & \cdots &h_3&h_2 & & \ddots\\
    &\iddots & & \ddots & \vdots & \vdots && \\
    & &\iddots & & h_j & \vdots\\
    &&&\iddots&& h_j
   \end{array} \right].
\end{align*}
From this, it is clear that this is a finite rank operator if $h_j = 0$ for $j > M$.  This implies that the operator is a limit of finite-rank operators and is therefore compact.  Then, for
\begin{align*}
    \vec c & = \mathcal K^\pm(h) \vec u,\\
    c_k & = \sum_{j = 1}^\infty h_{j+k} u_{-j}.
\end{align*}
Then,
\begin{align*}
    |c_k|^2 &\leq   \|\vec u\|_s^2 \sum_{j=1}^\infty |h_{j+k}|^2 j^{-2s}\\
    & = \|\vec u\|_s^2 \sum_{j=1}^\infty (j + k)^{2t} |h_{j+k}|^2 j^{-2s} (j+k)^{-2t}\\
    & = \|\vec u\|_s^2 \sum_{j=1+k}^\infty j^{2t} |h_{j}|^2 (j-k)^{-2s} j^{-2t}
\end{align*}
From here, we find
\begin{align*}
    \sum_{k = 1}^\infty k^{2(s + t)}|c_k|^2 &\leq \|\vec u\|_s^2 \sum_{k = 1}^\infty \sum_{j=1+k}^\infty j^{2t} |h_{j}|^2 (j-k)^{-2s} j^{-2t} k^{2( s + t)}\\
    & = \|\vec u\|_s^2 \sum_{j=2}^\infty \sum_{k = 1}^{j-1}  j^{2t} |h_{j}|^2 (j-k)^{-2s} j^{-2t} k^{2( s + t)}.
\end{align*}
The $k = 0$ term can be estimated separately. We then must estimate
\begin{align*}
    j^{-2t} \sum_{k = 1}^{j-1} k^{2( s + t)}(j-k)^{-2s}  = j \left( \frac{1}{j}\sum_{k = 1}^{j-1} \left( \frac{k}{j}\right)^{2( s + t)} \left(1 - \frac{k}{j}\right)^{-2s} \right)
\end{align*}
The quantity in parentheses is bounded by $\int_0^{1} x^{2(s+t)} (1-x)^{-2s} \dd s$.  If $s \geq 1/2$, this bound becomes useless and we replace it with 
\begin{align*}
    \frac{1}{j}\sum_{k = 1}^{j-1} \left( \frac{k}{j}\right)^{2( s + t)} \left(1 - \frac{k}{j}\right)^{-2s}  \leq \int_0^{1 - 1/j} x^{2(s+t)} (1-x)^{-2s} \dd s + 1/j (1-1/j)^{2(s+t)} j^{2s} \leq B_{s,t} j^{2 s -1}
\end{align*}
Thus, we find that if $s < 1/2$ and $h \in H^{t + 1}(\mathbb U)$ then
\begin{align*}
    \|\mathcal K^\pm(h)\|_{s \to  s + t} < \infty,
\end{align*}
and if $s \geq 1/2$, and $h \in H^{s + t}$ then
\begin{align*}
    \|\mathcal K^\pm(h)\|_{s \to  s + t} < \infty.
\end{align*}
Accounting for the norm of $h$ completes the proof.

\end{proof}

The proof of the previous lemma indicates that the structure of the matrices that come out of these calculations is important.  For this reason, we pivot to work exclusively with bi-infinite vectors instead of functions defined on $\mathbb U$.  While, of course, equivalent, the matrix representations are more convenient.  With the notation in the proof of the previous lemma, we write \eqref{eq:SIE} 
\begin{align*}
    (\mathcal C^+ - \mathcal M(g-1) \mathcal C^- ) \vec u = (\underbrace{\id}_{\mathcal L_0} - \underbrace{\mathcal M(g-1) \mathcal C^-}_{- \mathcal L_1}) \vec u = \vec g - \vec 1.
\end{align*}
Since, of course, $\mathcal P_N$ commutes with $\id$, this operator is admissible with
\begin{align*}
    \mathcal N_0 = \id,\quad \mathcal N_1 = - \mathcal M(g^{-1}-1) \mathcal C^- = - \mathcal M(g^{-1}) \mathcal C^- + \mathcal C^-.
\end{align*}
We then consider the approximations
\begin{align}\label{eq:approx_SIE_P}
    (\id - \mathcal P_N \mathcal M(g-1) \mathcal C^-) \vec u_N  = \mathcal P_N (\vec g - \vec 1), \quad \vec u_N \in \ran \mathcal P_N, \\
     (\id - \mathcal I_N \mathcal M(g-1) \mathcal C^-) \vec u_N =  \mathcal I_N (\vec g - \vec 1), \quad \vec u_N \in \ran \mathcal P_N.\label{eq:approx_SIE_I}
\end{align}
We pause briefly to highlight an important guiding principle.
\begin{lemma}\label{l:truncation}
Suppose $s > 1/2$ and $g \in H^s(\mathbb U)$.  Then there exists a constant $C_s > 0$
\begin{align*}
    \|\mathcal I_N \mathcal M(g) \mathcal C^+ - \mathcal I_N \mathcal M(\mathcal P_M g) \mathcal C^+\|_s \leq C_s\|g - \mathcal P_Mg\|_s,\\
    \|\mathcal P_N \mathcal M(g) \mathcal C^+ - \mathcal P_N \mathcal M(\mathcal P_M g) \mathcal C^+\|_s \leq C_s\|g - \mathcal P_Mg\|_s.
\end{align*}
\end{lemma}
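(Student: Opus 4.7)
The plan is to reduce both inequalities to one template estimate by first factoring out the perturbation. By linearity of the multiplication operator, if we set $h := g - \mathcal P_M g$, then
$$\mathcal X_N \mathcal M(g) \mathcal C^+ - \mathcal X_N \mathcal M(\mathcal P_M g) \mathcal C^+ = \mathcal X_N \mathcal M(h) \mathcal C^+,$$
for either choice $\mathcal X_N = \mathcal P_N$ or $\mathcal X_N = \mathcal I_N$. The task then reduces to bounding the operator norm on $H^s(\mathbb U)$ of a three-fold composition by a constant times $\|h\|_s$, with a constant uniform in $N$.

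Each factor can now be disposed of independently. The rightmost factor contributes exactly $1$, since $\|\mathcal C^+\|_s = 1$ for every $s$, as was recorded right after the definitions of $\mathcal C^\pm$ in this section. The middle factor $\mathcal M(h)$ is bounded on $H^s$ with $\|\mathcal M(h)\|_s \leq \|h\|_s$; this is precisely the algebra property for $s > 1/2$ established in Section~\ref{sec:period}, applied to $\|h v\|_s \leq \|h\|_s \|v\|_s$. The leftmost factor requires a separate observation for each choice, but in both cases one needs only uniform-in-$N$ boundedness on $H^s$. For $\mathcal P_N$, since it is the orthogonal projection onto a span of Fourier modes and the $H^s$ norm is a weighted $\ell^2$ norm in those modes, one has $\|\mathcal P_N\|_s \leq 1$ immediately. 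For $\mathcal I_N$, Theorem~\ref{t:proj-Hs} with $t = s$ gives $\|\mathcal I_N u - \mathcal P_N u\|_s \leq C_{s,s} \|u\|_s$, and so by the triangle inequality $\|\mathcal I_N\|_s \leq 1 + C_{s,s}$, again uniform in $N$.

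Combining the three bounds, the constant $C_s := 1 + C_{s,s}$ (which also dominates the trivial constant $1$ available in the $\mathcal P_N$ case) works for both inequalities in the statement. There is no real obstacle here: the only nontrivial input is the uniform boundedness of $\mathcal I_N$ on $H^s(\mathbb U)$ for $s > 1/2$, which is already in hand from Theorem~\ref{t:proj-Hs}, and the transfer of all the $H^s(\mathbb T)$ results to $H^s(\mathbb U)$ via $z = \ee^{\ii \theta}$ noted in Section~\ref{sec:period}.
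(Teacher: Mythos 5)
Your proof is correct and follows the same route as the paper's (very terse) proof: factor out $\mathcal M(h)$ by linearity, and combine the uniform $H^s$-boundedness of $\mathcal P_N$ and $\mathcal I_N$ (the latter from Theorem~\ref{t:proj-Hs} with $t=s$), the algebra property for $s>1/2$, and $\|\mathcal C^+\|_s = 1$. You have merely supplied the details the paper leaves implicit.
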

\begin{proof}
 This follows from that fact that $\mathcal I_N, \mathcal P_N$ are bounded operators on $H^s(\mathbb U)$, $s > 1/2$, and the algebra property for $H^s(\mathbb U)$, $s > 1/2$.
\end{proof}
In our proofs below, we will have an approximation step where $g$ is replaced with $\mathcal P_M g$ and then the resulting error, which will be small, is estimated separately.

Then to analyze \eqref{eq:approx_SIE_P} we need to identify $\mathcal N_{1,N}$ in Theorem~\ref{t:main}.  Since $\mathcal C^-$ commutes with $\mathcal P_N$, \eqref{eq:approx_SIE_P} is equivalent to
\begin{align*}
     (\mathcal C^+ - \mathcal M_N(g)  \mathcal P_N \mathcal C^-) \vec u_N = \mathcal P_N (\vec g - \vec 1), \quad \vec u_N \in \ran \mathcal P_N, \quad \mathcal M_{N}(g) = \mathcal P_N \mathcal M(g) \mathcal P_N.
\end{align*}
While one may be tempted to replace $\mathcal M_N(g)$ with $\mathcal M_{N}(g^{-1})$ to mirror the previous calculations, it is not entirely clear how to analyze the resulting operator.  So, instead, replace $\mathcal M_{N}(g)$ with $\mathcal M_{N}(g)^{-1} $, where the inverse is taken on the invariant subspace $\ran \mathcal P_N$, provided such an inverse exists --- an issue we take up below.  So, supposing this inverse exists, set
\begin{align*}
   \mathcal N_{1,N} = - \mathcal M_{N}(g)^{-1} \mathcal P_N \mathcal C^- + \mathcal P_N \mathcal C^-.
\end{align*}
We then consider the composition
\begin{align*}
    \mathcal J_N := (\mathcal N_0 + \mathcal N_{1,N}) (\mathcal L_0 + \mathcal P_N \mathcal L_1) & = ( \id - (\mathcal M_{N}(g)^{-1} -1)  \mathcal P_N \mathcal C^-)(\id - (\mathcal M_{N}(g) -1)  \mathcal P_N \mathcal C^-)\\
    & = (\id - \mathcal M_N(g)^{-1} \mathcal P_N \mathcal C^- + \mathcal P_N \mathcal C^-) (\id - \mathcal M_N(g) \mathcal P_N \mathcal C^- + \mathcal P_N \mathcal C^-).
\end{align*}
We replace the first $\mathcal C^-$ with $\mathcal C^+ - \id$ giving
\begin{align*}
     \mathcal J_N & = (\id - \mathcal M_N(g)^{-1} \mathcal P_N \mathcal C^+ + \mathcal P_N \mathcal C^+ + \mathcal M_N(g)^{-1} - \mathcal P_N) (\id - \mathcal M_N(g) \mathcal P_N \mathcal C^- + \mathcal P_N \mathcal C^-)\\
     & = \id - \mathcal M_N(g) \mathcal P_N \mathcal C^- + \mathcal P_N \mathcal C^-  - \mathcal M_N(g)^{-1}  \mathcal P_N \mathcal C^+ + \mathcal M_N(g)^{-1}  \mathcal P_N \mathcal C^+\mathcal M_N(g) \mathcal P_N \mathcal C^-\\
     & + \mathcal P_N \mathcal C^+  - \mathcal P_N \mathcal C^+ \mathcal M_N(g) \mathcal P_N \mathcal C^- + \mathcal M_N(g)^{-1} - \mathcal P_N \mathcal C^- + \mathcal M_N(g)^{-1} \mathcal P_N \mathcal C^-\\
     & - \mathcal P_N + \mathcal M_N(g) \mathcal P_N \mathcal C^- - \mathcal P_N \mathcal C^-\\
     & = \id - (\id - \mathcal M_N(g)^{-1} ) \mathcal P_N \mathcal C^+ \mathcal M_N(g)\mathcal P_N \mathcal C^-
\end{align*}
Here we used that $\mathcal C^+\mathcal C^- = 0$.  We identify
\begin{align*}
    \mathcal K_N = (\id - \mathcal M_{N}(g)^{-1}) \mathcal P_N \mathcal C^+ \mathcal M_{N}(g) \mathcal P_N \mathcal C^-,
\end{align*}
and its candidate limit
\begin{align*}
    \mathcal K = (\id - \mathcal M(g^{-1})) \mathcal C^+ \mathcal M(g) \mathcal C^-.
\end{align*}

Now, suppose that $g$ is such that $g_j = 0$ for $j > M$. We compute the matrix representation for $\mathcal C^+ \mathcal M_N(g) \mathcal P_N\mathcal C^-$ similar to Lemma~\ref{l:compact}:
\begin{align*}
    \mathcal C^+ \mathcal M_N(g) \mathcal P_N\mathcal C^- = -\left[ \begin{array}{ccccccc|ccc} 
    &&&&&\ddots &&& \iddots\\
    &&&&& & 0 & 0\\
    \hline
     \cdots & 0 & h_{N_+} & \cdots & \cdots &h_2&h_1 & 0 \\
    \cdots& 0  & h_{N_+ + 1} & h_{N_+} & \cdots &h_3&h_2 & & \ddots\\
   \cdots &0  &h_{N_+ + 2} &\ddots& \ddots & \vdots & \vdots && \\
    & \vdots &\vdots &\ddots&\ddots & h_{N_+} & \vdots\\
    \cdots & 0 & h_{2 N_+}& \cdots & h_{N_++2} &h_{N_++1}  & h_{N_+}\\
    \iddots & 0 & 0 &\cdots& 0 & 0 & 0 \\
     & \vdots & \vdots && \vdots & \vdots & \vdots
   \end{array} \right].
\end{align*}
So, if $N$ is such that $N_+ > M$, then we see that,
\begin{align*}
    \mathcal P_N \mathcal C^+ \mathcal M_N(g) \mathcal P_N\mathcal C^- = \mathcal C^+ \mathcal M(g) \mathcal C^- = \mathcal P_N \mathcal C^+ \mathcal M(g) \mathcal P_N\mathcal C^-.
\end{align*}

For such an $N$, we then have
\begin{align*}
 \mathcal K_N - \mathcal K = ( \mathcal M(g^{-1}) - \mathcal M_N(g)^{-1}) \mathcal P_N \mathcal C^+ \mathcal M(g) \mathcal C^-.
\end{align*}
For a general $g$ we write $g = \mathcal P_M g + (\id - \mathcal P_M g)$ and find
\begin{align*}
 \mathcal K_N - \mathcal K = ( \mathcal M(g^{-1}) - \mathcal M_N(g)^{-1}) \mathcal P_N \mathcal C^+ \mathcal M(\mathcal P_M g) \mathcal C^- + \mathcal E_{N,M},
\end{align*}
where
\begin{align*}
    \mathcal E_{N,M} = (\id - \mathcal M_N(g)^{-1}) \mathcal C^+ (\mathcal M_N( (\id - \mathcal P_M )g )) \mathcal C^- - (\id - \mathcal M(g^{-1})) \mathcal C^+ (\mathcal M( (\id - \mathcal P_M )g )) \mathcal C^-
\end{align*}

And because $\mathcal C^+ \mathcal M(g) \mathcal C^-$ is compact, we have some hope that this will converge to zero in operator norm.  Suppose $s \in \{0\} \cup (1/2, \infty)$ and $g \in H^t$ for $ t > \max\{s,1\}$.  For $s \in (1/2,\infty)$ we have
\begin{align}\label{eq:rh-est}
    \|\mathcal K_N - \mathcal K\|_s \leq \|(\mathcal M(g^{-1}) - \mathcal M_N(g)^{-1}) \mathcal P_N\|_{t \to s} \|\mathcal C^+ \mathcal M(g) \mathcal C^-\|_{s \to t} + \|\mathcal E_{N,M}\|_s,
\end{align}
where $\|\mathcal C^+ \mathcal M(g) \mathcal C^-\|_{s \to t}$ is finite by Lemma~\ref{l:compact}.  For $s > 1/2$, since $H^s(\mathbb U)$ has the algebra property, we estimate
\begin{align}\label{eq:E}
    \|\mathcal E_{N,M}\|_s \leq (2 + \|\mathcal M_N(g)^{-1}\|_s + \|\mathcal M(g^{-1})\|_s) \|(\id - \mathcal P_M) g\|_s.
\end{align}
If $s = 0$, $\|(\id - \mathcal P_M) g\|_s$ must be replaced with the $L^\infty(\mathbb U)$ norm.  And we will see that we can take $M$ proportional to $N$.

Then, to demonstrate that (i) $\mathcal M_N(g)^{-1}$ exists, and (ii) $\|(\mathcal M(g^{-1}) - \mathcal M_N(g)^{-1}) \mathcal P_N\|_{t \to s} \to 0$, we have the following theoretical developments.  The proof of the following is given in \cite[Theorem 2.11]{Bottcher1999}.

\begin{theorem}[Gohberg-Feldman]\label{t:G-F}
Suppose $h$ is continuous and that
\begin{align*}
    \mathcal T(g) := \mathcal C^+ \mathcal M(g) \mathcal C^+
\end{align*}
is boundedly invertible on $\ran \mathcal C^+$ in the $L^2(\mathbb U)$ norm.  Then $\mathcal M_N(g)$ is invertible for sufficiently large $N > N_0$ and
\begin{align*}
    \sup_{N > N_0} \|\mathcal  M_N(g)^{-1}\|_0 < \infty.
\end{align*}
\end{theorem}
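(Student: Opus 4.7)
The plan is to invoke the classical Wiener--Hopf factorization machinery, following \cite{Bottcher1999}. Because $\mathcal M(g)$ is shift-invariant on $\ell^2(\mathbb Z)$, the symmetric truncation $\mathcal M_N(g)$ is unitarily equivalent (via a shift of indices) to the standard finite Toeplitz matrix $T_N(g) := [g_{i-j}]_{i,j=0}^{N-1} = P_N^{\mathrm{std}}\, \mathcal M(g)\, P_N^{\mathrm{std}}$, where $P_N^{\mathrm{std}}$ is the $\ell^2$-projection onto indices $\{0,1,\ldots,N-1\}$. It therefore suffices to prove stability of $\{T_N(g)\}$: invertibility for large $N$ together with $\sup_N \|T_N(g)^{-1}\|_{\ell^2} < \infty$.

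Because $\mathcal T(g)$ is boundedly invertible on $\ran \mathcal C^+$ with continuous $g$, the classical invertibility criterion of Gohberg forces $g(z) \neq 0$ on $\mathbb U$ and $\operatorname{wind}(g) = 0$. A continuous logarithm of $g$ therefore exists, yielding the Wiener--Hopf factorization $g = g_- g_+$ with $g_+, g_+^{-1} \in H^\infty_+$ and $g_-, g_-^{-1} \in H^\infty_-$. In matrix form $\mathcal M(g_+)$ is lower-triangular, $\mathcal M(g_-)$ is upper-triangular, and each finite section $T_N(g_\pm)$ is triangular with non-vanishing diagonal $g_{\pm,0}$. A direct triangularity argument gives $T_N(g_\pm)^{-1} = T_N(g_\pm^{-1})$, with $\sup_N \|T_N(g_\pm)^{-1}\|_{\ell^2} \leq \|g_\pm^{-1}\|_{L^\infty}$. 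Writing
\begin{align*}
T_N(g) = T_N(g_-)\, T_N(g_+) + E_N, \qquad E_N = P_N^{\mathrm{std}}\, \mathcal M(g_-)\,(\id - P_N^{\mathrm{std}})\, \mathcal M(g_+)\, P_N^{\mathrm{std}},
\end{align*}
reduces the stability question to controlling the effect of $E_N$ on the inverse.

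The main obstacle is that $\|E_N\|$ does not vanish as $N \to \infty$ in general: even for simple rational symbols (e.g.\ $g_+(z) = 1 - \alpha z$ and $g_-(z) = (1 - \bar\beta/z)^{-1}$) one checks that $\|E_N\|$ stays bounded away from $0$, so a naive Neumann perturbation argument fails. The actual proof (as in \cite[Theorem~2.11]{Bottcher1999}) bypasses this obstacle through a $C^*$-algebraic stability criterion: the sequence $\{T_N(g)\}$ is stable if and only if both $\mathcal T(g)$ and the reflected Toeplitz operator $\mathcal T(\tilde g)$, with $\tilde g(z) = g(1/z)$, are invertible, where the second condition arises as the strong limit of the reflected sequence $R_N T_N(g) R_N$ for $R_N$ the order-reversing involution on $\ran P_N^{\mathrm{std}}$. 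For continuous symbols both invertibility conditions reduce to the same geometric requirement (non-vanishing of the symbol and zero winding number), which is already encoded in the hypothesis that $\mathcal T(g)$ is invertible. This establishes the stability of $\{T_N(g)\}$, and hence of $\{\mathcal M_N(g)\}$, proving the theorem.
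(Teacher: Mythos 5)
The paper does not actually prove this theorem; it cites \cite[Theorem~2.11]{Bottcher1999} for the proof. So there is no in-paper argument to compare against, and the right benchmark is whether your sketch is a faithful account of what that reference does. On that score you do well: you correctly diagnose why a naive Wiener--Hopf perturbation argument fails (the Hankel-type correction $E_N$ converges strongly to a nonzero Hankel operator, not to zero, so it cannot be treated as a small perturbation), and you correctly state the Silbermann-type stability criterion that replaces it --- $\{T_N(g)\}$ is stable iff both $T(g)$ and the reflected operator $T(\tilde g)$, $\tilde g(z)=g(1/z)$, are invertible, the second condition arising from the strong limit $R_N T_N(g) R_N \to T(\tilde g)$. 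You also correctly observe that for continuous symbols the invertibility of $T(g)$ and of $T(\tilde g)$ are equivalent (both reduce to $g \neq 0$ and winding number zero), so a single hypothesis suffices. The deferral of the proof of the stability criterion to the reference is consistent with the paper's own treatment.

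One factual slip in the part you subsequently abandon: the claim that a continuous, nonvanishing, zero-winding symbol admits a Wiener--Hopf factorization $g = g_- g_+$ with $g_\pm, g_\pm^{-1} \in H^\infty$ is false in general. The Riesz projection is unbounded on $L^\infty(\mathbb U)$, and there exist continuous symbols whose canonical factors are unbounded; bounded factors are guaranteed only under stronger assumptions, e.g.\ $g$ in the Wiener algebra or H\"older continuous. (In the paper's actual applications this is fine, since $g \in H^1(\mathbb U)$ embeds in the Wiener algebra, but it does not follow from the ``continuous'' hypothesis of the theorem as stated.) This does not damage your final argument, since the $C^*$-algebraic stability criterion you invoke does not rely on bounded Wiener--Hopf factors; it would matter only if you tried to turn your first paragraph into a self-contained proof.
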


A sufficient condition for $\mathcal C^+ \mathcal M(g) \mathcal C^+$ to be invertible directly relates to the solvability of the problem under consideration and comes from the following proposition that can be found in \cite[Proposition 1.13]{Bottcher1999}.  First, we define the Hardy space on the disk $\mathbb D = \{z \in \mathbb C : |z| < 1\}$:
\begin{align*}
    H^\infty(\mathbb D) := \left\{ f : \mathbb D \to \mathbb C, \text{ analytic} : \sup_{z \in \mathbb D} |f(z)| < \infty\right\}.
\end{align*}
The Hardy space $\overline{H^\infty(\mathbb D)}$ is the set of all functions $f$ such that $g(z) = f(1/z)$ satisfies $g \in H^\infty(\mathbb D)$.

\begin{proposition}\label{p:prod}
Suppose $a \in \overline{H^\infty(\mathbb D)}$, $b \in L^\infty(\mathbb U)$ and $c \in {H^\infty}(\mathbb D)$, then
\begin{align*}
    \mathcal T(abc) = \mathcal T(a) \mathcal T(b) \mathcal T(c).
\end{align*}
\end{proposition}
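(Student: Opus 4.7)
The plan is to reduce to two pairwise identities and then chain them together. With $\mathcal C^+$ acting as the Hardy projection onto Fourier modes with non-negative indices, the characterizing property of $c \in H^\infty(\mathbb D)$ is that $\mathcal M(c)$ maps $\ran \mathcal C^+$ into itself, while the characterizing property of $a \in \overline{H^\infty(\mathbb D)}$ is that $\mathcal M(a)$ maps $\ran(\id - \mathcal C^+)$ into itself. These one-sided invariances are the only inputs needed.

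First, I would establish that for $f \in L^\infty(\mathbb U)$ and $c \in H^\infty(\mathbb D)$,
\begin{align*}
\mathcal T(f)\mathcal T(c) = \mathcal T(fc).
\end{align*}
Since $\mathcal M(c)$ preserves $\ran \mathcal C^+$, we have $\mathcal C^+\mathcal M(c)\mathcal C^+ = \mathcal M(c)\mathcal C^+$, so
\begin{align*}
\mathcal T(f)\mathcal T(c) = \mathcal C^+ \mathcal M(f)\mathcal C^+ \mathcal M(c)\mathcal C^+ = \mathcal C^+ \mathcal M(f)\mathcal M(c)\mathcal C^+ = \mathcal C^+ \mathcal M(fc)\mathcal C^+ = \mathcal T(fc).
\end{align*}
Dually, for $a \in \overline{H^\infty(\mathbb D)}$ and $f \in L^\infty(\mathbb U)$, the invariance of $\ran(\id - \mathcal C^+)$ under $\mathcal M(a)$ gives $\mathcal C^+ \mathcal M(a)(\id - \mathcal C^+) = 0$, hence $\mathcal C^+ \mathcal M(a) = \mathcal C^+ \mathcal M(a)\mathcal C^+$, and a parallel one-line calculation yields $\mathcal T(a)\mathcal T(f) = \mathcal T(af)$.

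To finish, I would apply the second identity to the factorization $abc = a \cdot (bc)$, which is legitimate since $bc \in L^\infty(\mathbb U)$, to obtain $\mathcal T(abc) = \mathcal T(a)\mathcal T(bc)$. Then I would apply the first identity with the given $b \in L^\infty(\mathbb U)$ and $c \in H^\infty(\mathbb D)$ to get $\mathcal T(bc) = \mathcal T(b)\mathcal T(c)$. Chaining the two yields the proposition.

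The proof carries essentially no principal obstacle, being a direct consequence of the Hardy-space invariance properties at the heart of the Toeplitz algebra. The only bookkeeping point is to respect the convention that $\mathcal C^+$ includes the zero mode, so that $\ran \mathcal C^+$ really coincides with the $L^2$-boundary values of $H^\infty(\mathbb D)$ and the one-sided invariance statements used above hold verbatim.
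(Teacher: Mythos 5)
The paper does not prove this proposition; it cites it directly to B\"ottcher--Silbermann \cite[Proposition 1.13]{Bottcher1999}, so there is no proof in the paper to compare against. Your argument is nevertheless correct and is the standard one: the one-sided invariances ($\mathcal M(c)$ preserves $\ran\mathcal C^+$ for $c \in H^\infty(\mathbb D)$, and $\mathcal M(a)$ preserves $\ran(\id - \mathcal C^+) = \ran\mathcal C^-$ for $a \in \overline{H^\infty(\mathbb D)}$) give $\mathcal T(f)\mathcal T(c) = \mathcal T(fc)$ and $\mathcal T(a)\mathcal T(f) = \mathcal T(af)$ for any $f \in L^\infty(\mathbb U)$, and chaining these via $abc = a\cdot(bc)$ finishes the job. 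Your final remark about conventions is also warranted: the paper's displayed formula $\mathcal C^+ u(z) = \sum_{j=1}^\infty u_j z^j$ is a typo for $\sum_{j=0}^\infty u_j z^j$, as forced by the identity $\mathcal C^+ - \mathcal C^- = \id$ that the paper itself uses, and the proof genuinely relies on $\ran\mathcal C^+$ containing the constant mode.
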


We arrive at the corollary that we require.
\begin{corollary}\label{cor:inv}
Suppose the RH problem \eqref{eq:rhp}, with $g \in H^1(\mathbb U)$, $\min_{z \in \mathbb U} |g(z)| > 0$, has a unique solution of the form \eqref{eq:solnform} where $u \in H^1(\mathbb U)$.  Then $\mathcal T(g)$ is invertible, and therefore there exists $N_0 > 0$ such that
\begin{align*}
    \sup_{N > N_0} \|\mathcal  M_N(g)^{-1}\|_0 < \infty.
\end{align*}
\end{corollary}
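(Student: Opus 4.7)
The plan is to reduce the corollary to Theorem~\ref{t:G-F} by establishing bounded invertibility of the Toeplitz operator $\mathcal{T}(g)$ on $\ran \mathcal{C}^+$ in the $L^2(\mathbb{U})$ norm; the continuity hypothesis of that theorem is automatic, since $g \in H^1(\mathbb{U}) \subset C(\mathbb{U})$ by Sobolev embedding. To obtain the invertibility, the strategy is to extract a Wiener-Hopf factorization $g = ac$ from the assumed unique solution $\phi = 1 + \mathcal{C}u$, with $u \in H^1(\mathbb{U})$.

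The first and central step is to show that $\phi^+$ is nonvanishing on $\overline{\mathbb{D}}$ and $\phi^-$ is nonvanishing on $\{|z| \geq 1\} \cup \{\infty\}$. Suppose, for contradiction, that $\phi^+(z_0) = 0$ for some $z_0 \in \mathbb{D} \setminus \{0\}$. Let $B(z) = (z - z_0)/(1 - \bar z_0 z)$ be the corresponding Blaschke factor and set $\tilde\phi(z) := \phi(z)/(B(\infty) B(z))$. Then $\tilde\phi$ is analytic on $\mathbb{C} \setminus \mathbb{U}$: the zero of $\phi^+$ at $z_0$ cancels the pole of $1/B$ there, while the other distinguished point $1/\bar z_0$ of $B$ lies in the exterior, where $\tilde\phi^-$ simply acquires a zero. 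Since $|B| = 1$ on $\mathbb{U}$ we have $\tilde\phi^+ = \tilde\phi^- g$, and the chosen normalization gives $\tilde\phi(\infty) = 1$. Because $\phi^\pm \in H^1(\mathbb{U})$ and $B$ is smooth and nonvanishing on $\mathbb{U}$, $\tilde\phi$ has the form \eqref{eq:solnform} with density $\tilde u = \tilde\phi^+ - \tilde\phi^- \in H^1(\mathbb{U})$, and $\tilde u \neq u$ since $\tilde\phi \neq \phi$, contradicting uniqueness. The edge case $z_0 = 0$ is handled by replacing $B$ with the monomial $z$ and forming $\phi(z)(1 + 1/z)$ to restore the normalization at $\infty$. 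For hypothetical boundary zeros $z_0 \in \mathbb{U}$, joint vanishing $\phi^+(z_0) = \phi^-(z_0) = 0$ is forced by $g \neq 0$, and continuity from $u \in H^1 \subset C$ then permits an analogous rational modification. Hypothetical zeros of $\phi^-$ in the exterior are ruled out by a symmetric Blaschke argument. This non-vanishing verification is the principal technical obstacle.

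With non-vanishing established, set $c = \phi^+$ and $a = 1/\phi^-$, so that $g = ac$ on $\mathbb{U}$. Continuity of $\phi^\pm$ up to the boundary, together with non-vanishing, yields $c, c^{-1} \in H^\infty(\mathbb{D})$ and $a, a^{-1} \in \overline{H^\infty(\mathbb{D})}$. Proposition~\ref{p:prod}, applied with $b = 1$, gives $\mathcal{T}(g) = \mathcal{T}(a)\mathcal{T}(c)$. Since $c \in H^\infty(\mathbb{D})$, multiplication by $c$ preserves $\ran \mathcal{C}^+$, so $\mathcal{T}(c)$ acts there as this multiplication and its inverse is $\mathcal{T}(c^{-1})$. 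A direct Fourier-support computation, exploiting that $a$ and $a^{-1}$ carry only non-positive modes whose convolution reproduces $aa^{-1} = 1$, shows $\mathcal{T}(a^{-1})\mathcal{T}(a) = \id$ on $\ran \mathcal{C}^+$. Combining yields $\mathcal{T}(g)^{-1} = \mathcal{T}(c^{-1})\mathcal{T}(a^{-1})$ as a bounded inverse on $\ran \mathcal{C}^+$, after which Theorem~\ref{t:G-F} supplies the uniform bound on $\|\mathcal{M}_N(g)^{-1}\|_0$.
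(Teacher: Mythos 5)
Your overall strategy matches the paper's: prove $\phi^+$ is nonvanishing on $\overline{\mathbb D}$ and $\phi^-$ nonvanishing on $\{|z|\ge 1\}\cup\{\infty\}$, then use the factorization $g=(1/\phi^-)\phi^+$ together with Proposition~\ref{p:prod} to produce $\mathcal T(g)^{-1}$, and finish with Theorem~\ref{t:G-F}. Your algebraic closing step is correct (and in fact carefully tracks which factor is the bounded inverse on which side). For interior/exterior zeros, your Blaschke-quotient construction works, though it is an unnecessary complication: the paper just multiplies $\phi$ by the rational factor $1+1/(z-z^*)$, which has the right normalization at infinity automatically, whereas your $\tilde\phi=\phi/(B(\infty)B)$ has $\tilde\phi(\infty)=1/B(\infty)^2\neq 1$, so it needs to be $B(\infty)\phi/B$ or similar; the essential idea is the same.

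The genuine gap is your treatment of boundary zeros $z_0\in\mathbb U$. You assert that \emph{continuity} of $\phi^\pm$ (from $u\in H^1\subset C$) ``permits an analogous rational modification,'' but this does not work. Dividing by $(z-z_0)$ with $z_0$ on the contour produces a singularity on the contour itself, and the resulting density is not obviously in $L^2(\mathbb U)$, let alone $H^1(\mathbb U)$. Indeed $H^1(\mathbb U)\subset C^{0,1/2}(\mathbb U)$, so the joint vanishing $\phi^\pm(z_0)=0$ only gives $\phi^\pm(z)=O(|z-z_0|^{1/2})$; the quotient $\phi^\pm(z)/(z-z_0)$ is then only $O(|z-z_0|^{-1/2})$, whose square is non-integrable near $z_0$. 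One therefore cannot conclude the modified function has the form~\eqref{eq:solnform}. This is exactly why the paper's proof inserts the local Wiener--Hopf factor $\mathfrak g(z)=\exp\bigl(\frac{1}{2\pi\ii}\int_\gamma \frac{\log g(z')}{z'-z}\dd z'\bigr)$ on a small subarc $\gamma$ around $z_0$: the quotient $\tilde\phi=\phi/\mathfrak g$ has no jump across $\gamma$, hence extends \emph{analytically} to a full neighborhood of $z_0$, and then its zero at $z_0$ is an honest analytic zero of order at least one. That is what actually yields a bounded quotient $\tilde\phi(z)/(z-z_0)$ and hence an admissible competing solution. You would need to import this argument (or an equivalent local regularity upgrade) to close the boundary case; continuity alone is not enough.
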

\begin{proof}
If $u \in H^1(\mathbb U)$ then the solution extends continuously up to $\mathbb U$ from both the interior and exterior of $\mathbb U$.  Now, if $\phi$ vanished, say $\phi(z^*)= 0$ for $|z^*| \neq 1$ then it follows that $\phi(z) ( 1 + 1/(z-z^*))$ is another solution, contradicting uniqueness. So if $\phi$ does vanish it has to be on the boundary $\mathbb U$.   Suppose $\phi^+(z^*) = 0$, Then the jump condition
\begin{align*}
    0 = \phi^-(z^*) g(z^*).
\end{align*}
This implies that $\phi^-(z^*) = 0$.  Then let $\gamma$ be a small subarc of $\mathbb U$ that contains $z^*$ in its interior.  Consider the functions
\begin{align*}
    \mathfrak g(z) &= \exp\left( \frac{1}{ 2 \pi \ii} \int_\gamma \frac{\log g(z')}{z' - z} \dd z \right),\\
    \tilde \phi(z) & = \phi(z)/\mathfrak g(z),
\end{align*}
where the branch of the logarithm is taken so that $\log g$ is continuous on $\gamma$ (shrinking $\gamma$ if necessary).  Now, it follows that $\tilde \phi(z)$ has no jump on $\gamma$:
\begin{align*}
     \tilde \phi^+(z) = \tilde \phi^-(z), \quad z \in \gamma.  
\end{align*}
And then because $\mathfrak g(z)$ does not vanish near $z^*$, we conclude that $\tilde \phi(z^*) = 0$, implying that $\phi(z) ( 1 + 1/(z-z^*))$ is another solution as it has sufficiently smooth boundary values near $z = z^*$.  Thus, we have shown that $(\phi^-)^{-1} \in \overline{H^\infty(\mathbb U)}$ and Proposition~\ref{p:prod} demonstrates that
\begin{align*}
    \mathcal T(g)^{-1} = \mathcal T(\phi^+) \mathcal T((\phi^-)^{-1}),
\end{align*}
and an application of Theorem~\ref{t:G-F} completes the proof.
\end{proof}

With this in hand, we consider,
\begin{align*}
    \mathcal M(g) v = \mathcal P_N f, \quad \mathcal M_N(g) v_N = \mathcal P_N f.
\end{align*}
Thus
\begin{align*}
     v_N - v &= (\mathcal M_N(g)^{-1} - \mathcal M(g^{-1})) \mathcal P_N f,\\
     &= (\mathcal M_N(g)^{-1}\mathcal M(g)\mathcal M(g^{-1}) - \mathcal M(g^{-1})) \mathcal P_N f\\
     & = (\mathcal M_N(g)^{-1}\mathcal M(g) - \id) \mathcal M(g^{-1})\mathcal P_N f\\
     & = (\mathcal M_N(g)^{-1}\mathcal M(g) - \mathcal P_N) \mathcal M(g^{-1})\mathcal P_N f + (\mathcal P_N - \id) \mathcal M(g^{-1})\mathcal P_N f\\
     & = \mathcal M_N(g)^{-1}(\mathcal M(g) - \mathcal M_N(g)) \mathcal M(g^{-1})\mathcal P_N f + (\mathcal P_N - \id) \mathcal M(g^{-1})\mathcal P_N f
\end{align*}
Then we consider, for $N > N_0$
\begin{align*}
    \|v_N - v\|_s \leq \left[ \sup_{N > N_0} \|\mathcal  M_N(g)^{-1}\|_{s} \| \mathcal M(g) - \mathcal M_N(g)\|_{t \to s}   + \|\id - \mathcal P_N\|_{t \to s} \right]\|\mathcal M(g^{-1})\|_t \|f\|_t.
\end{align*}
Then, we estimate
\begin{align*}
    \| \mathcal M(g) - \mathcal M_N(g)\|_{t \to s} &= \| \mathcal M(g) - \mathcal P_N \mathcal M(g) \mathcal P_N\|_{t \to s} \\
    &\leq \| (\id - \mathcal P_N)\mathcal M(g)\|_{t \to s} + \| \mathcal P_N \mathcal M(g) (\id -  \mathcal P_N)\|_{t \to s}\\
    & \leq \|\id - \mathcal P_N\|_{t \to s} ( \|\mathcal M(g)\|_t + \|\mathcal M(g)\|_s).
\end{align*}
So,
\begin{align*}
    \|v_N - v\|_s \leq \| \id - \mathcal P_N\|_{t \to s} \left[ \sup_{N > N_0} \|\mathcal  M_N(g)^{-1}\|_{s} ( \|\mathcal M(g)\|_t + \|\mathcal M(g)\|_s) + 1 \right] \|\mathcal M(g^{-1})\|_t \|f\|_t.
\end{align*}
Then we must estimate, for $s \geq 0$, $N > N_0$,
\begin{align*}
    \|\mathcal  M_N(g)^{-1}\|_{s} = \max_{u \in \ran \mathcal P_N}\frac{\|\mathcal  M_N(g)^{-1}u\|_{s}}{\|u\|_{s}} \leq \max_{u \in \ran \mathcal P_N}\frac{N^s\|\mathcal  M_N(g)^{-1}u\|_{0}}{\|u\|_{0}},
\end{align*}
giving the final estimate
\begin{align*}
    \|v_N - v\|_s \leq A_{s,t} N^{2s -t} \|f\|_t,
\end{align*}
which implies
\begin{align}\label{eq:mult-diff}
    \|(\mathcal M(g^{-1}) - \mathcal M_N(g)^{-1}) \mathcal P_N\|_{t \to s} \leq A_{s,t} N^{2s -t}.
\end{align}
And since $g \in H^t(\mathbb U)$, $t > \max\{s,1\}$ and $g$ does not vanish we have for $s > 1/2$
\begin{align*}
    \|\mathcal E_{N,M}\|_s \leq (2 + C_1 N^s + C_2 ) \|(\id - \mathcal P_M) g\|_s \leq  (2 + C_1 N^s + C_2 ) D_{s,t} N^{s-t} \|g\|_t,
\end{align*}
by Theorem~\ref{t:proj-Hs} (for constants $C_1,C_2$).  And for $s = 0$
\begin{align*}
    \|\mathcal E_{N,M}\|_0 \leq (2 + C_1 + C_2 ) \|(\id - \mathcal P_M) g\|_{L^\infty(\mathbb U)} \leq  (2 + C_1 + C_2 ) D_{r,t} N^{r-t} \|g\|_t,
\end{align*}
again by Theorem~\ref{t:proj-Hs} (for different constants $C_1,C_2$ and $1/2 < r < t$).  Using \eqref{eq:rh-est}, Theorem~\ref{t:main} applies giving the following result for $2s < t$.

\begin{theorem}\label{t:rhp-1}
Suppose $g \in H^t(\mathbb U)$, $t \geq 1$, with $\min_{z \in \mathbb U}|g(z)| > 0$.  Assume RH problem \eqref{eq:rhp} has a unique solution $u$ of the form \eqref{eq:solnform}, $u \in H^t(\mathbb U)$.  Then, for  $s \in \{0\} \cup (1/2, \infty)$ satisfying $0 \leq 2s < t$, there exists $N_0 > 0$ such that the operator
\begin{align*}
    \mathcal C^+ - \mathcal M_N(g) \mathcal C^- \quad \text{on} \quad \ran \mathcal P_N,
\end{align*}
is invertible and the unique solution $u_N$ of
\begin{align*}
    (\mathcal C^+ - \mathcal M_N(g) \mathcal C^-) u_N = \mathcal P_N(  g -  1),
\end{align*}
satisfies
\begin{align*}
    \|u - u_N\|_s  \leq C_{s,t} N^{s - t}.
\end{align*}
\end{theorem}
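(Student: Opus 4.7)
The plan is to recognize that essentially all the analytic work has been done in the discussion leading up to the statement, and that the theorem now follows by a direct application of Theorem~\ref{t:main}. First I would verify admissibility: with $\mathcal L_0 = \id$, $\mathcal L_1 = -\mathcal M(g-1)\mathcal C^-$, $\mathcal N_0 = \id$, $\mathcal N_1 = -\mathcal M(g^{-1}-1)\mathcal C^-$, the commutation conditions $\mathcal L_0 \mathcal P_N = \mathcal P_N \mathcal L_0$ and $\mathcal N_0 \mathcal P_N = \mathcal P_N \mathcal N_0$ hold trivially, and the factorization $\mathcal N \mathcal L = \id - \mathcal K$ with $\mathcal K = (\id - \mathcal M(g^{-1}))\mathcal C^+ \mathcal M(g)\mathcal C^-$ compact on $H^s(\mathbb U)$ follows from Lemma~\ref{l:compact}. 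Invertibility of $\id - \mathcal K$ in turn follows from the assumed unique solvability of the RH problem. By Corollary~\ref{cor:inv}, the operators $\mathcal M_N(g)^{-1}$ exist for all $N$ beyond some $N_0$ with $L^2$-norms uniformly bounded, so $\mathcal N_{1,N} = -\mathcal M_N(g)^{-1}\mathcal P_N\mathcal C^- + \mathcal P_N \mathcal C^-$ is a well-defined candidate.

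Next I would assemble the convergence $\|\mathcal K_N - \mathcal K\|_s \to 0$ from the three ingredients already prepared in the excerpt. Using the decomposition
\begin{align*}
\mathcal K_N - \mathcal K = (\mathcal M(g^{-1}) - \mathcal M_N(g)^{-1})\mathcal P_N \mathcal C^+ \mathcal M(\mathcal P_M g)\mathcal C^- + \mathcal E_{N,M},
\end{align*}
the estimate \eqref{eq:rh-est} provides an upper bound involving $\|(\mathcal M(g^{-1}) - \mathcal M_N(g)^{-1})\mathcal P_N\|_{t \to s}$, which by \eqref{eq:mult-diff} decays as $N^{2s-t}$, and the boundedness of $\mathcal C^+ \mathcal M(g)\mathcal C^-: H^s \to H^t$ by Lemma~\ref{l:compact}. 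Choosing $M$ proportional to $N$ and applying \eqref{eq:E} with Theorem~\ref{t:proj-Hs} controls $\|\mathcal E_{N,M}\|_s$ by a constant times $N^{s-t}$ (with an extra factor $N^s$ in the $s>1/2$ case, giving $N^{2s-t}$). The hypothesis $2s < t$ is exactly what guarantees both contributions vanish.

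With $\|\mathcal K_N - \mathcal K\|_s \to 0$ in hand, Theorem~\ref{t:main} applies: for $N$ sufficiently large the operator $\mathcal L_0 + \mathcal P_N \mathcal L_1$ is invertible on $\ran \mathcal P_N$, and
\begin{align*}
\|u - u_N\|_s \leq C_N \|u - \mathcal P_N u\|_s,
\end{align*}
with $C_N$ uniformly bounded for $N > N_0$. Since $u \in H^t(\mathbb U)$, Theorem~\ref{t:proj-Hs} yields $\|u - \mathcal P_N u\|_s \leq D_{s,t} N^{s-t}\|u\|_t$, producing the claimed rate.

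The main obstacle is the estimate $\|(\mathcal M(g^{-1}) - \mathcal M_N(g)^{-1})\mathcal P_N\|_{t \to s} = O(N^{2s-t})$. The factor $N^{2s}$ arises because the Gohberg--Feldman Theorem~\ref{t:G-F} gives only an $L^2$ bound on $\mathcal M_N(g)^{-1}$, which must then be pushed up to $H^s$ via the inverse estimate $\|u\|_s \leq N^s \|u\|_0$ on $\ran \mathcal P_N$. This is precisely the source of the $2s$ versus $s$ gap mentioned in the introduction; closing it would require a Sobolev-space strengthening of Theorem~\ref{t:G-F}. The remaining technical point is handling the two regimes $s = 0$ (where the $\mathcal E_{N,M}$ estimate needs an $L^\infty$ bound on $(\id-\mathcal P_M)g$, absorbable by taking $1/2 < r < t$ and using $H^r \hookrightarrow L^\infty$) and $s > 1/2$ (where the algebra property of $H^s(\mathbb U)$ is directly invoked), but both fit under the same $2s < t$ threshold.
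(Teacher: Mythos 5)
Your proposal reconstructs exactly the argument the paper assembles in the discussion preceding the theorem: the same identification $\mathcal L_0 = \id$, $\mathcal L_1 = -\mathcal M(g-1)\mathcal C^-$, $\mathcal N_{1,N} = -\mathcal M_N(g)^{-1}\mathcal P_N\mathcal C^- + \mathcal P_N\mathcal C^-$, the same $\mathcal K$ and $\mathcal K_N$, the same $\mathcal E_{N,M}$ splitting via $\mathcal P_M g$, the same reliance on Lemma~\ref{l:compact}, Corollary~\ref{cor:inv} (hence Theorem~\ref{t:G-F}), and the estimates \eqref{eq:rh-est}, \eqref{eq:E}, \eqref{eq:mult-diff}, concluding via Theorem~\ref{t:main} and Theorem~\ref{t:proj-Hs}. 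This is correct and is essentially the paper's own proof, including your accurate diagnosis that the $N^{2s}$ loss stems from lifting the Gohberg--Feldman $L^2$ bound on $\mathcal M_N(g)^{-1}$ to $H^s$ via the inverse inequality on $\ran\mathcal P_N$.
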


While this is an important step, constructing a matrix representation for the operator $\mathcal C^+ - \mathcal M_N(g) \mathcal C^-$ is not as easy as it is to work with the interpolation projection $\mathcal I_N$. So, consider the equation
\begin{align*}
    (\id - \mathcal I_N \mathcal M(g-1) \mathcal C^-) u_N = \mathcal I_N(  g -  1), \quad u_N \in \ran \mathcal P_N.
\end{align*}
Now, we look to apply Theorem~\ref{t:IN-PN}, and to do this, we need to show that $\mathcal C^+ - \mathcal I_N \mathcal M(g) \mathcal C^-$ is invertible on $\ran \mathcal P_N$.  

By Theorem~\ref{t:rhp-1} know that for sufficiently large $N$ if $g$ is sufficiently regular and $N$ is sufficiently large, the operator
\begin{align*}
    \id - \mathcal P_N \mathcal M(g-1) \mathcal C^-,
\end{align*}
is invertible on $\ran \mathcal P_N$.  So, we write
\begin{align*}
    \id - \mathcal I_N \mathcal M(g-1) \mathcal C^- = \id - \mathcal P_N \mathcal M(g-1) \mathcal C^- + (\mathcal P_N - \mathcal I_N) \mathcal M(g-1) \mathcal C^-.
\end{align*}
To show this is invertible, we pre-multiply this by three operators and show that the resulting operator is approximately lower-triangular when restricted to $\ran \mathcal P_N$.
The following is helpful in determining when inverse operators exist.

\begin{lemma}
Suppose the RH problem \eqref{eq:rhp}, with $g \in H^t(\mathbb U)$, $t \geq 1$, $\min_{z \in \mathbb U} |g(z)| > 0$ has a unique solution of the form \eqref{eq:solnform} where $u \in H^t(\mathbb U)$.  Then\footnote{To ease notation, we write $\phi_\pm^{-1}$ in place of $(\phi^\pm)^{-1}$.}
\begin{align*}
    (\id - \mathcal M(g-1) \mathcal C^-)^{-1} = \mathcal M(\phi^+ ) \mathcal C^+ \mathcal M( \phi_+^{-1} ) - \mathcal M(\phi^- ) \mathcal C^- \mathcal M( \phi_+^{-1} ).
\end{align*}
\end{lemma}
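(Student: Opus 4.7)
The plan is to verify the claimed formula by direct substitution, once the analytic preliminaries are in place. The key enabling fact is that the boundary values $\phi^\pm$ do not vanish on $\mathbb U$, and $\phi$ has no zeros in $\mathbb C \setminus \mathbb U$ either: both follow from the uniqueness hypothesis by exactly the argument used in the proof of Corollary~\ref{cor:inv}. Consequently $\phi^\pm$ and $\phi_\pm^{-1}$ lie in $H^t(\mathbb U)$, and by the algebra property, multiplication by each is bounded on the relevant Sobolev space.

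Given $f$, I would set $h := \phi_+^{-1} f$ and introduce the sectionally analytic function
\[
\Psi(z) := \phi(z)\, (\mathcal C h)(z), \qquad z \in \mathbb C \setminus \mathbb U,
\]
which satisfies $\Psi(\infty) = 0$ since $\phi(\infty)=1$ and $(\mathcal C h)(\infty)=0$. By the Plemelj formulae, $\Psi = \mathcal C u$ where $u = \Psi^+ - \Psi^- = \phi^+ \mathcal C^+ h - \phi^- \mathcal C^- h$, and hence $\mathcal C^\pm u = \phi^\pm \mathcal C^\pm h$. Observe that this $u$ is precisely what the right-hand side of the claimed formula produces from $f$. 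Using $\mathcal C^+ - \mathcal C^- = \id$ together with the jump relation $\phi^+ = g\phi^-$, the verification is one line:
\[
(\id - \mathcal M(g-1)\mathcal C^-)u = \mathcal C^+ u - g \mathcal C^- u = \phi^+ \mathcal C^+ h - \phi^+ \mathcal C^- h = \phi^+ h = f.
\]
Thus the formula is a right inverse.

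To upgrade this to a two-sided inverse, it remains to check that $\id - \mathcal M(g-1)\mathcal C^-$ is injective. For any $u_0$ in its kernel, the sectionally analytic function $\phi_0 := \mathcal C u_0$ solves the homogeneous RH problem $\phi_0^+ = g\phi_0^-$ with $\phi_0(\infty)=0$. Because $\phi$ has no zeros off $\mathbb U$ and $\phi^\pm$ do not vanish on $\mathbb U$, the ratio $\phi_0/\phi$ has no jump across $\mathbb U$ and extends to a bounded entire function vanishing at infinity. By Liouville, $\phi_0 \equiv 0$ and so $u_0 = 0$. A bounded right inverse together with injectivity promotes the formula to the genuine two-sided inverse.

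The main obstacle is not the algebra, which is short, but the analytic bookkeeping: one must confirm that $\phi_\pm^{-1}$ define bounded multipliers on the relevant Sobolev space, that $\phi \cdot \mathcal C h$ has boundary values of sufficient regularity for the Plemelj identity to apply as an identity in $H^s(\mathbb U)$, and that the non-vanishing of $\phi^\pm$ and $\phi$ can be invoked without quantitative control issues. All of these follow from the hypothesis $u \in H^t(\mathbb U)$ (hence $\phi^\pm \in H^t(\mathbb U)$), the algebra property of $H^t(\mathbb U)$ for $t>1/2$, and the uniqueness-based non-vanishing established as in Corollary~\ref{cor:inv}.
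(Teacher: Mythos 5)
Your proposal is correct, and it fills in the details that the paper leaves as a ``direct calculation.'' The key algebraic step (rewrite $\id - \mathcal M(g-1)\mathcal C^- = \mathcal C^+ - \mathcal M(g)\mathcal C^-$, substitute, and use the jump relation $g\phi^- = \phi^+$) is exactly what the paper's terse proof intends, and your framing through the auxiliary sectionally analytic function $\Psi = \phi \cdot \mathcal C h$ is a natural, slightly more conceptual way to organize the same computation: it packages the operator identities $\mathcal C^\pm\mathcal C^\pm = \pm\mathcal C^\pm$, $\mathcal C^+\mathcal C^- = 0$ as the Plemelj statement that $\Psi = \mathcal C(\Psi^+ - \Psi^-)$ when $\Psi(\infty)=0$. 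Two things you add beyond what the paper says explicitly are worth noting. First, the paper asserts the displayed operator is \emph{the} inverse, which implicitly requires injectivity of $\id - \mathcal M(g-1)\mathcal C^-$; your Liouville argument for the homogeneous RH problem supplies that cleanly. (Alternatively, one could check the two-sided operator identity directly, as the paper likely intends, which subsumes injectivity.) Second, you correctly flag the non-vanishing of $\phi$ and $\phi^\pm$ as the enabling fact, citing the argument in Corollary~\ref{cor:inv}, which is precisely what the paper's one-line proof points to.
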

\begin{proof}
The proof follows from a direct calculation where one needs to note that, following the proof of Corollary~\ref{cor:inv}, $\phi$ vanishes nowhere, and therefore $\phi_\pm^{-1} \in H^t(\mathbb U)$.  
\end{proof}

Using \eqref{eq:E} and \eqref{eq:mult-diff}, we have that
\begin{align*}
    (\mathcal C^+ - \mathcal M(g^{-1}) \mathcal C^-)(\mathcal C^+ - \mathcal M(g) \mathcal C^-)  - (\id + \mathcal P_N \mathcal C^- - \mathcal M_N(g)^{-1} \mathcal C^-)(\id  - \mathcal M_N (g-1) \mathcal C^-) \to 0,
\end{align*}
in the $H^s(\mathbb U)$ operator norm, if $g \in H^t(\mathbb U)$ and $t > 2s$.  Thus
\begin{align*}
    \underbrace{(\mathcal C^+ - \mathcal M(g) \mathcal C^-)^{-1}(\mathcal C^+ - \mathcal M(g^{-1}) \mathcal C^-)^{-1} (\id + \mathcal P_N \mathcal C^- - \mathcal M_N(g)^{-1} \mathcal C^-)}_{\mathcal S_N}(\id  - \mathcal M_N (g-1) \mathcal C^-) \to \id,
\end{align*}
again in the $H^s(\mathbb U)$ operator norm.  We then observe something important about $\mathcal S_N$.  If $v = \mathcal C^+ u$, $u \in \ran \mathcal P_N$, then $\mathcal S_N v = v$.  This follows because $\mathcal C^- v = 0$, $\mathcal C^+ v = v$. The next task is to then show that
\begin{align}\label{eq:ran-cond}
    \ran (\mathcal P_N - \mathcal I_N) \mathcal M(g-1) \mathcal C^-\mathcal P_N \subset \ran \mathcal C^+.
\end{align}

To assist in this, let $I_M$ be the $M \times M$ identity matrix and we set $I_+ = I_{N_+ +1}$, $I_- = I_{N_-}$.  Then the matrix form for $\mathcal P_N: H^s(\mathbb U) \to H^s(\mathbb U)$ is given, in block form, by
\begin{align*}
\mathcal P_N = \left[ \begin{array}{ccc|cccccc}
    \ddots & \ddots & \ddots & & & \iddots \\
    \ddots &  0 & 0 & 0 \\
    \ddots & 0 & I_- & 0 & 0 \\ \hline
    &0 & 0 & I_+ & 0 & \ddots \\
    &&0 & 0 & 0 & \ddots \\
   \iddots &&& \ddots & \ddots & \ddots
    \end{array} \right],
\end{align*}
and considering $\mathcal P_N : H^s(\mathbb U) \to \ran \mathcal P_N$
\begin{align*}
    \mathcal P_N = \left[ \begin{array}{cccc|cccccc}
    \cdots & 0  & 0 & I_- & 0 & 0 & 0 &\cdots\\ \hline
    \cdots &0 & 0 & 0 & I_+ & 0 & 0 &  \cdots \\
    \end{array} \right],
\end{align*}
We then write $\mathcal I_N: H^s(\mathbb U) \to \ran \mathcal P_N$,  $s > 1/2$, as 
\begin{align*}
    \mathcal I_N = \left[ \begin{array}{cccc|cccccc}
    \cdots & I_-  & 0 & I_- & 0 & I_- & 0 &\cdots\\ \hline
    \cdots &0 & I_+ & 0 & I_+ & 0 & I_+ &  \cdots \\
    \end{array} \right],
\end{align*}
and therefore, as acting on $H^s(\mathbb U)$,
\begin{align*}
\mathcal I_N - \mathcal P_N = \left[ \begin{array}{cccc|cccccc}
    \cdots & I_-  & 0 & 0 & 0 & I_- & 0 &\cdots\\ \hline
    \cdots &0 & I_+ & 0 & 0 & 0 & I_+ &  \cdots \\
    \end{array} \right].
\end{align*}
Write $h = g -1$, and suppose that $\mathcal P_N g = g$ so that
as an operator from $\ran \mathcal P_N$ to $H^s(\mathbb U)$
\begin{align*}
  \mathcal M(g-1) \mathcal C^- \mathcal P_N = -\left[\begin{array}{ccc|ccccccc}
    \vdots & &\vdots & \\
     0& &  \vdots\\ 
     h_{-N_-} & \ddots & \vdots  \\
     \vdots &\ddots & 0 && \iddots \\
      h_{-1} & \ddots & h_{-N_-} & 0 \\ 
     h_0 & \ddots & \vdots & 0  \\
     \vdots & \ddots & h_{-1} && \ddots \\ 
  h_{N_+} &  & h_0  \\ \hline
 0  & \ddots & \vdots \\
  \vdots& \ddots & h_{N_+} \\
   \vdots & & 0\\
   \vdots &  & \vdots
    \end{array} \right].
\end{align*}
Define the lower-triangular matrix $\mathcal A_N(h)$ by
\begin{align*}
    \mathcal A_N(h) = \begin{bmatrix} h_{-N_-} &  &   \\
     \vdots &\ddots &  \\
      h_{-1} & \cdots & h_{-N_-}  \end{bmatrix},
\end{align*}
if $N_- = N_+ + 1$. If $N_- = N_+$ set
\begin{align*}
    \mathcal A_N(h) = \begin{bmatrix} 0 & \cdots & 0 \\
    h_{-N_-} &  &   \\
     \vdots &\ddots &  \\
      h_{-1} & \cdots & h_{-N_-}  \end{bmatrix}.
\end{align*}

And then we can compute, as an operator on $\ran \mathcal P_N$,
\begin{align}\label{eq:lower-tri-op}
    (\mathcal I_N - \mathcal P_N)\mathcal M(g-1) \mathcal C^+ \mathcal P_N = \left[\begin{array}{c|c}
    0 & 0 \\ \hline
    - \mathcal A_N(h) & 0
     \end{array} \right].
\end{align}
This establishes \eqref{eq:ran-cond}.  Furthermore, this establishes that
\begin{align}\label{eq:invert-op}
    \id - (\mathcal I_N - \mathcal P_N)\mathcal M(g-1) \mathcal C^+ \mathcal P_N,
\end{align}
is invertible on $\ran \mathcal P_N$.  Then we consider as an operator on $\ran \mathcal P_N$
\begin{align*}
    \mathcal P_N \mathcal S_N ( \id - \mathcal I_N \mathcal M(g-1) \mathcal C^-) \mathcal P_N = \mathcal P_N \mathcal S_N ( \id - \mathcal P_N \mathcal M(g-1) \mathcal C^-) \mathcal P_N - (\mathcal I_N - \mathcal P_N)\mathcal M(g-1) \mathcal C^+\mathcal P_N.
\end{align*}
This, in operator norm, is close to the invertible operator \eqref{eq:invert-op} and to show that this operator is invertible, for $N$ sufficiently large, we need a bound on the inverse of \eqref{eq:invert-op}.  But this follows because \eqref{eq:lower-tri-op} is bounded on $H^s(\mathbb U)$, $s > 1/2$, and on $\ran \mathcal P_N$
\begin{align*}
    (\id - (\mathcal I_N - \mathcal P_N)\mathcal M(g-1) \mathcal C^+ \mathcal P_N)^{-1} = \id + (\mathcal I_N - \mathcal P_N)\mathcal M(g-1) \mathcal C^+ \mathcal P_N.
\end{align*}
The general case where $\mathcal P_N g \neq g$ can then be treated using Lemma~\ref{l:truncation}.   We arrive at the following result.  

\begin{theorem}
Theorem~\ref{t:rhp-1} holds with $\mathcal P_N$ replaced with $\mathcal I_N$ and a, potentially, different constant $C_{s,t}$. 
\end{theorem}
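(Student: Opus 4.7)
The plan is to invoke Theorem~\ref{t:IN-PN} with $\mathcal L_0 = \id$, $\mathcal L_1 = -\mathcal M(g-1)\mathcal C^-$, $\mathcal P_N^{\mathbb W} = \mathcal P_N$, and $\mathcal I_N^{\mathbb W} = \mathcal I_N$. Two ingredients are required: (i) invertibility of $\id - \mathcal I_N \mathcal M(g-1)\mathcal C^-$ on $\ran \mathcal P_N$ for large $N$, and (ii) a quantitative bound on $\|(\mathcal I_N - \mathcal P_N)(f - \mathcal L_1 u_N)\|_s$. The hard work for (i) has essentially been done in the discussion above; the proof is mostly an assembly.

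For (i), first reduce to the case $g = \mathcal P_M g$ via Lemma~\ref{l:truncation} with $M$ proportional to $N$; the approximation error in replacing $g$ by $\mathcal P_M g$ in $\mathcal I_N \mathcal M(g-1)\mathcal C^-$ vanishes in operator norm by Lemma~\ref{l:truncation} and Theorem~\ref{t:proj-Hs}. Then precompose with $\mathcal P_N \mathcal S_N$, split
\begin{equation*}
\id - \mathcal I_N\mathcal M(g-1)\mathcal C^- = \id - \mathcal P_N \mathcal M(g-1)\mathcal C^- + (\mathcal P_N - \mathcal I_N)\mathcal M(g-1)\mathcal C^-,
\end{equation*}
and use that $\mathcal S_N$ acts as the identity on $\ran \mathcal C^+\cap \ran \mathcal P_N$ (combined with the containment \eqref{eq:ran-cond}) to obtain, on $\ran \mathcal P_N$,
\begin{equation*}
\mathcal P_N \mathcal S_N (\id - \mathcal I_N \mathcal M(g-1)\mathcal C^-)\mathcal P_N = \mathcal P_N \mathcal S_N (\id - \mathcal P_N \mathcal M(g-1)\mathcal C^-)\mathcal P_N - (\mathcal I_N - \mathcal P_N)\mathcal M(g-1)\mathcal C^+\mathcal P_N.
\end{equation*}
The first term on the right converges to $\id$ in operator norm by the analysis that produced Theorem~\ref{t:rhp-1}, while the second term is the nilpotent operator with matrix form \eqref{eq:lower-tri-op}, so subtracting it from the identity gives an operator with the explicit inverse $\id + (\mathcal I_N - \mathcal P_N)\mathcal M(g-1)\mathcal C^+\mathcal P_N$ on $\ran \mathcal P_N$. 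Hence the right-hand side above is invertible on $\ran \mathcal P_N$ for $N$ large, and its inverse is uniformly bounded in the relevant Sobolev norm; since $\mathcal P_N\mathcal S_N$ is likewise uniformly bounded with bounded inverse on $\ran \mathcal P_N$ (by construction of $\mathcal S_N$), the desired invertibility follows.

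For (ii), write $f - \mathcal L_1 u_N^{\mathcal P} = (g - 1) + \mathcal M(g-1)\mathcal C^- u_N^{\mathcal P}$, where $u_N^{\mathcal P}$ is the solution from Theorem~\ref{t:rhp-1}. Since $u_N^{\mathcal P}$ is bounded in $H^t(\mathbb U)$ uniformly in $N$ (Theorem~\ref{t:rhp-1} gives this up to $H^s$, but the argument applied with $s$ replaced by $t$ yields $H^t$ control, using that $g \in H^t(\mathbb U)$) and the algebra property of $H^t(\mathbb U)$ for $t > 1/2$ controls the product, one has $f - \mathcal L_1 u_N^{\mathcal P}$ bounded in $H^t(\mathbb U)$. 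Theorem~\ref{t:proj-Hs} then gives $\|(\mathcal I_N - \mathcal P_N)(f - \mathcal L_1 u_N^{\mathcal P})\|_s \leq C N^{s-t}$, and combining with (i) in the estimate of Theorem~\ref{t:IN-PN} yields $\|u_N^{\mathcal P} - u_N^{\mathcal I}\|_s \leq C N^{s-t}$. The triangle inequality with the bound from Theorem~\ref{t:rhp-1} closes the estimate.

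The main obstacle, already resolved in the preparation above, is (i): the difference $(\mathcal I_N - \mathcal P_N)\mathcal M(g-1)\mathcal C^-$ does \emph{not} vanish in operator norm, so a direct perturbation of Theorem~\ref{t:rhp-1} fails. The essential device is that after multiplying by $\mathcal C^-$ on the left and $\mathcal C^+ \mathcal P_N$ on the right, the obstruction collapses to the explicit, strictly lower-triangular, nilpotent operator $\mathcal A_N(h)$ of \eqref{eq:lower-tri-op}, whose effect on the identity is invertible by direct inspection; the role of $\mathcal S_N$ is precisely to isolate this piece while preserving it unchanged, using that $\mathcal S_N$ fixes $\ran \mathcal C^+$.
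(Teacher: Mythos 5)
Your treatment of the invertibility question (i) is exactly the paper's argument: reduce to the case $\mathcal P_M g = g$ via Lemma~\ref{l:truncation}, precondition by $\mathcal P_N\mathcal S_N$, use that $\mathcal S_N$ fixes $\mathcal C^+(\ran\mathcal P_N)$ together with the range containment \eqref{eq:ran-cond}, and observe that the obstruction collapses to the nilpotent lower-triangular operator \eqref{eq:lower-tri-op}, which perturbs the identity by an explicitly invertible operator on $\ran\mathcal P_N$. This is the paper's central device, correctly reproduced.

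However, step (ii) contains a genuine gap. You assert that $u_N^{\mathcal P}$ is uniformly bounded in $H^t(\mathbb U)$ on the grounds that ``the argument applied with $s$ replaced by $t$ yields $H^t$ control.'' Theorem~\ref{t:rhp-1} requires $2s < t$, so the substitution $s \mapsto t$ is outside its range of validity; the Gohberg--Feldman estimate \eqref{eq:mult-diff} degrades by the factor $N^{2s-t}$ and becomes vacuous when $s = t$. What the argument as stated actually yields is uniform $H^r$ boundedness of $u_N^{\mathcal P}$ only for $r < t/2$, and feeding that into Theorem~\ref{t:proj-Hs} gives $\|(\mathcal I_N - \mathcal P_N)\mathcal M(g-1)\mathcal C^- u_N^{\mathcal P}\|_s = O(N^{s-r})$, a strictly slower rate than the claimed $O(N^{s-t})$. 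Closing this requires more structure than the raw bound: for instance, note that $\mathcal P_N(f - \mathcal L_1 u_N^{\mathcal P}) = u_N^{\mathcal P}$, hence $(\mathcal I_N - \mathcal P_N)(f - \mathcal L_1 u_N^{\mathcal P}) = (\mathcal I_N - \mathcal P_N)(\id - \mathcal P_N)(f - \mathcal L_1 u_N^{\mathcal P})$, and then control the high-frequency tail using the aliasing formula and the coefficient decay of $g$, rather than trying to bootstrap $H^t$ control of $u_N^{\mathcal P}$ from Theorem~\ref{t:rhp-1} itself.
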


\begin{remark}
In many relevant applications, in the Riemann--Hilbert problem one has $g \in \mathbb C^{n \times n}$ and $\phi$ can be taken to be either vector or matrix valued.  The extension of the theory used above (e.g., Theorem~\ref{t:G-F}) exists, see \cite[Section 6.2]{Bottcher1999} and it is much more involved.  But the convergence theorems established here extend to this matrix case using this theory. 
 A full discussion of the details here will be left for a future publication.
\end{remark}

\section{Numerical demonstrations}
\subsection{Solving differential equations}
Consider solving
\begin{align}\label{eq:3rdop}
    - \frac{\dd^3 u}{\dd \theta^3} (\theta) + g(\theta) u(\theta) = h(\theta),
\end{align}
where $g(\theta)$ and $h(\theta)$ are given by their Fourier coefficients
\begin{align}
    g(\theta) &= \sum_{j=-\infty}^\infty g_j \ee^{\ii j \theta}, \quad g_j =(1 + |j|)^{-\alpha},\label{eq:g}\\
    h(\theta) &= \sum_{j=-\infty}^\infty h_j \ee^{\ii j \theta}, \quad h_j = \begin{cases} 1 & j = 0,\\
    \mathrm{sign}(j)(1 + |j|)^{-\alpha} & \text{otherwise},\end{cases}\label{eq:h}
\end{align}
and we choose $\alpha > t + 1/2$ ($\alpha = 1.51$) so that $h,g \in H^t(\mathbb T).$    Then it follows that the solution $u$ of this equation satisfies $u \in H^{t + 3}(\mathbb T)$.  In Figure~\ref{f:diff-error} we demonstrate that the convergence rate of the finite-section method applied to this equation converges in $H^s(\mathbb T)$ at effectively the rate $N^{s - t -3}$.
\begin{figure}[tbp]
\includegraphics[width=.8\linewidth]{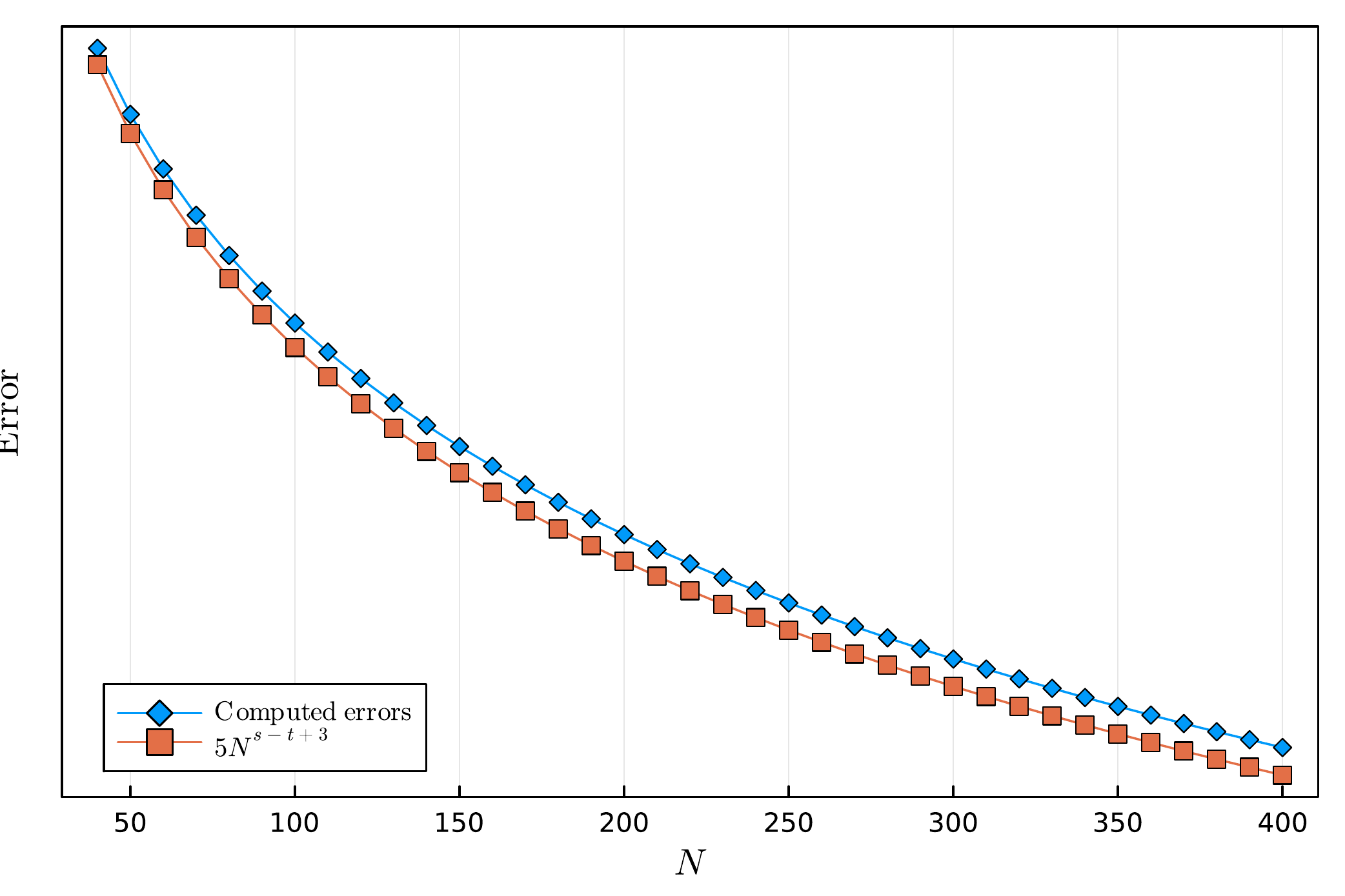}
\caption{\label{f:diff-error}  The measured $H^s(\mathbb U)$ error in solving the euquation \eqref{eq:3rdop}  The diamonds (blue) give the computed error by comparing against a solution computed using $N = 2001.$  The rate of convergence closely matches $N^{s-t + 3}$ reflecting the fact that that Theorem~\ref{t:diff-conv} is optimal.}
\end{figure}

\subsection{Spectrum approximation}

We also consider the demonstration of Theorem~\ref{t:spectrum-main}.  Maybe surprisingly, to really see the true rate of convergence we have to resort to high-precision arithmetic and for this reason we only give examples that are self-adjoint where variable precision eigensolvers are available.

\subsubsection{A second-order operator}

We consider computing the eigenvalues of the operator
\begin{align}\label{eq:2op}
\mathcal L_0 + \mathcal L_1 = - \frac{\dd^2}{\dd \theta^2} + g(\theta),
\end{align}
where $g$ is given in \eqref{eq:g}.  Using $\alpha = 2.51$, giving $\ell = 2, k = 2, p= 0$, the estimates in Theorem~\ref{t:spectrum-main} give
\begin{align*}
    |\lambda_N - \lambda| \leq C (2 + |\lambda|)^{\ell/k} N^{-t},
\end{align*}
where we have dropped the ceiling function in the exponent and have ignored the $N$ \emph{sufficiently large} requirement.  

To analyze the error of the method as $\lambda$ varies, we compute the eigenvalues of $\mathcal L_0 + \mathcal P_N \mathcal L_1$ with $N = 501$ and treat it as the ground truth, and then for $N = 41, 81, 161, 321$, to each approximate eigenvalue $\lambda_j^{(N)}$ we assign a distance (error)
\begin{align*}
    d^{(N)}_j = \min_i | \lambda_j^{(N)} - \lambda_i^{(501)}|. 
\end{align*}
We also define a rescaled error
\begin{align*}
    r^{(N)}_j = d^{(N)}_j N^{\ell} (2 + |\lambda|)^{-\ell/k}.  
\end{align*}
These computations are summarized in Figure~\ref{f:second}.
\begin{figure}[tbp]
\includegraphics[width=.49\linewidth]{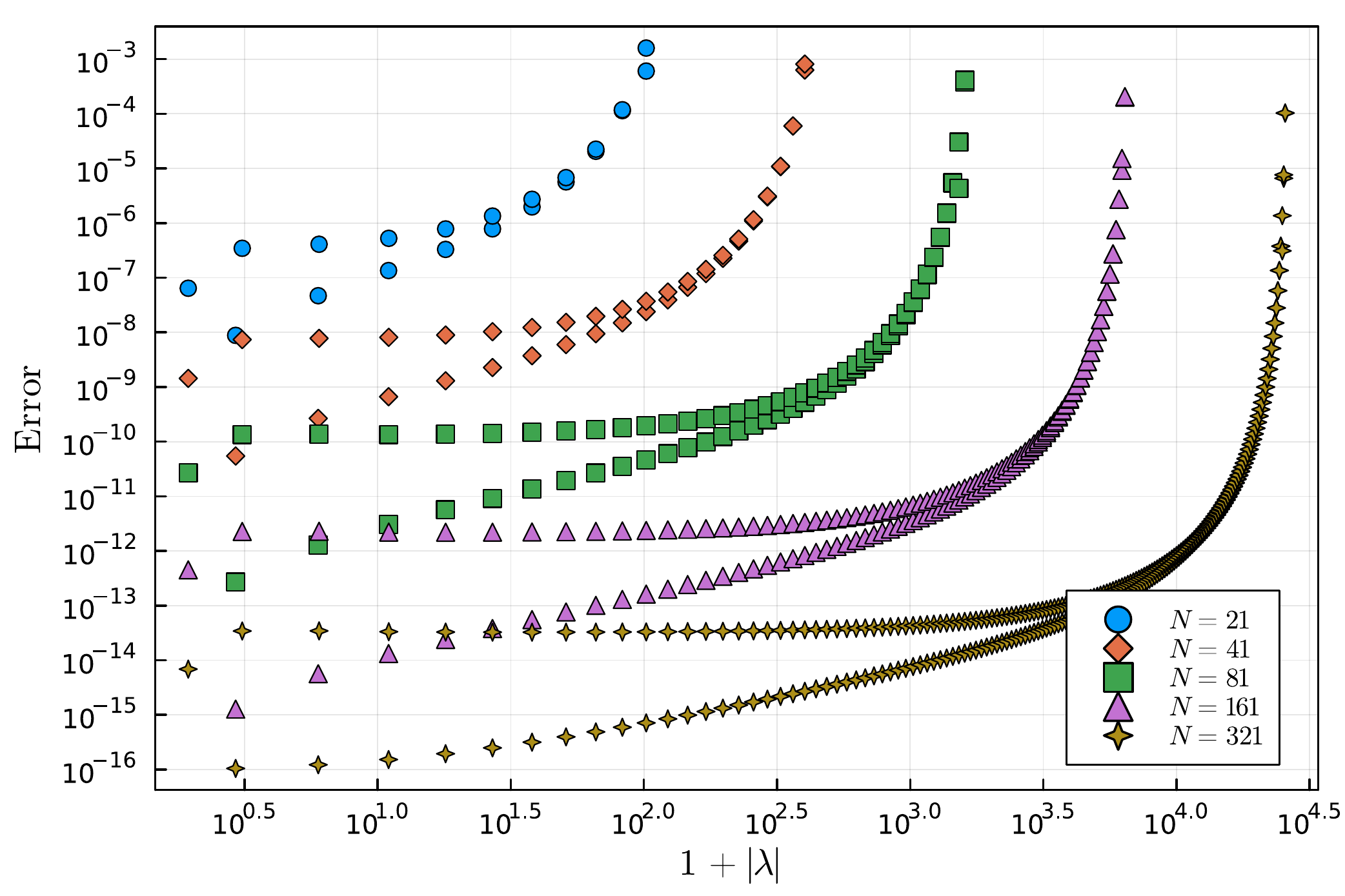}
\includegraphics[width=.49\linewidth]{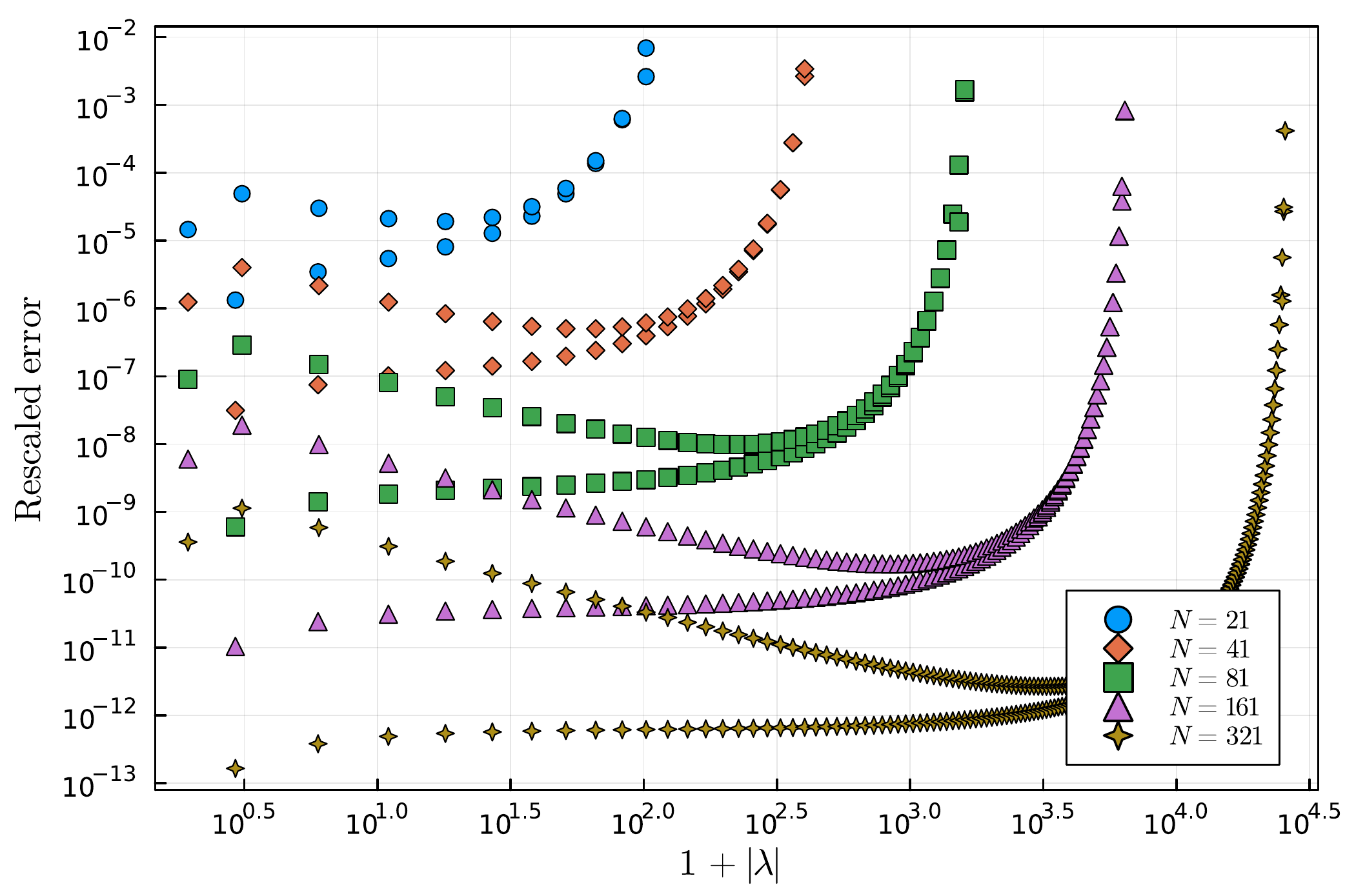}
\caption{\label{f:second}  The errors $d_j^{(N)}$ (left panel) and the rescaled errors $r_j^{(N)}$ plotted versus $1 + |\lambda_j|$ for the finite $N$ approximation of \eqref{eq:2}.  The rescaled errors verify the estimates in Theorem~\ref{t:spectrum-main}.}
\end{figure}

\subsubsection{A third-order operator}
We consider computing the eigenvalues of the operator
\begin{align}\label{eq:3op}
\mathcal L_0 + \mathcal L_1 = - \ii \frac{\dd^3}{\dd \theta^3} + g(\theta),
\end{align}
where \eqref{eq:g}.  Using $\alpha = 2.51$, giving $\ell = 2, k = 3, p= 0$, the estimates in Theorem~\ref{t:spectrum-main} are then summarized similarly as above in Figure~\ref{f:third}.
\begin{figure}[tbp]
\includegraphics[width=.49\linewidth]{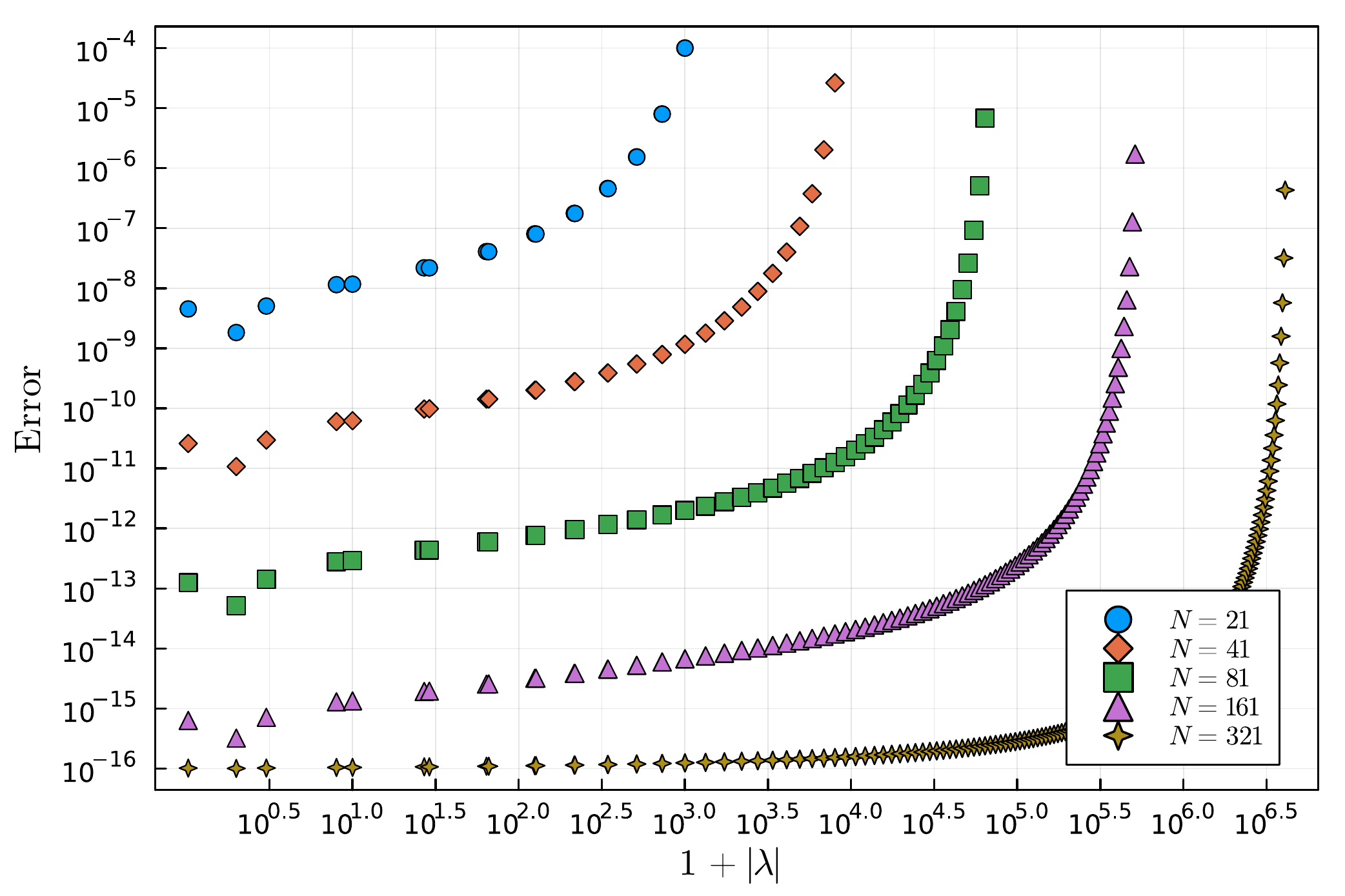}
\includegraphics[width=.49\linewidth]{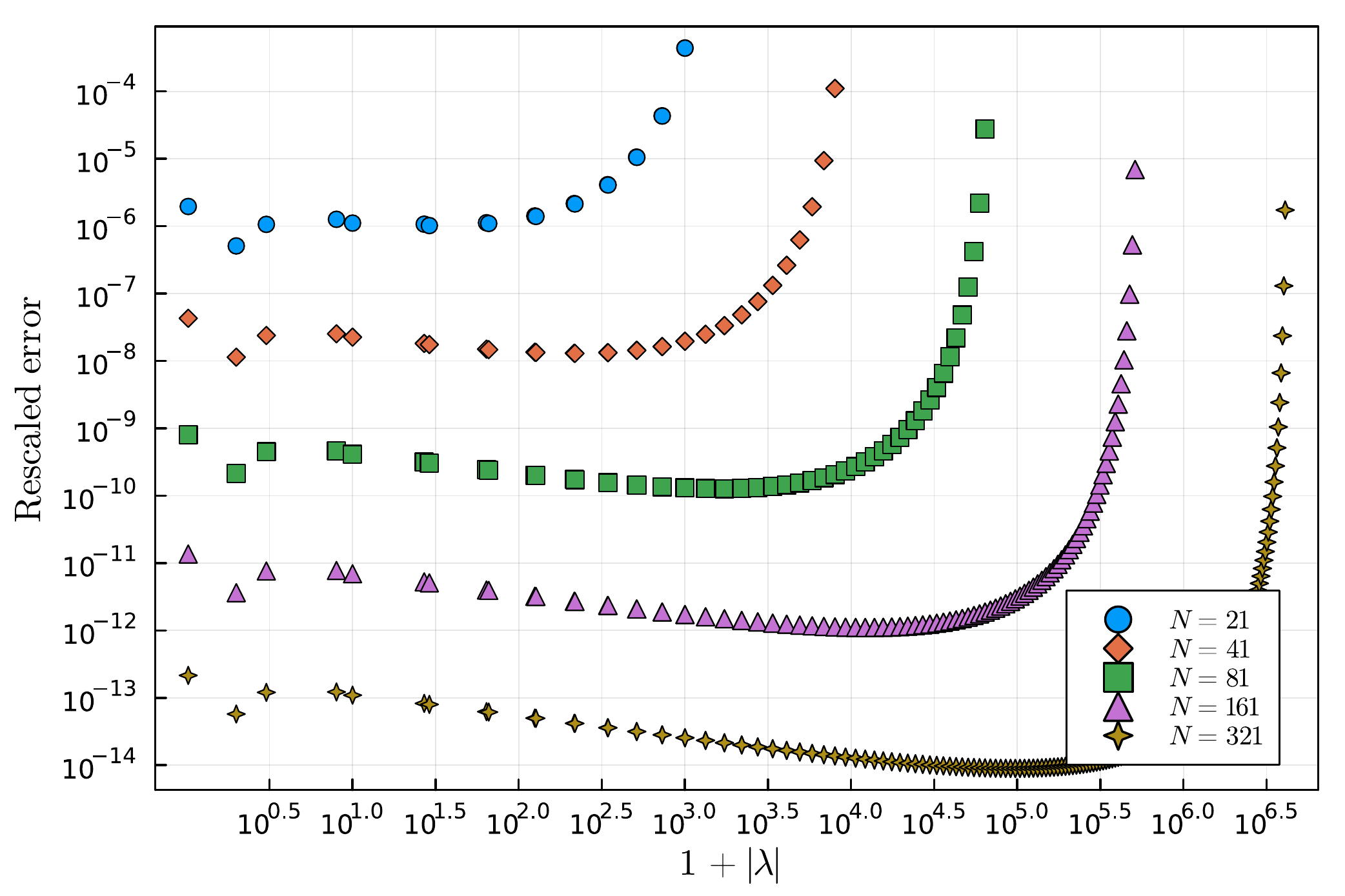}
\caption{\label{f:third}  The errors $d_j^{(N)}$ (left panel) and the rescaled errors $r_j^{(N)}$ plotted versus $1 + |\lambda_j|$ for the finite $N$ approximation of \eqref{eq:3op}.  The rescaled errors verify the estimates in Theorem~\ref{t:spectrum-main}.}
\end{figure}

\subsection{A Riemann--Hilbert problem}

Let $\epsilon > 0$ be small.  Consider the function
\begin{align}\label{eq:gg}
    g(z) = \sum_{j=-\infty}^\infty g_j z^j, \quad g_j = \begin{cases} 1 & j = 0,\\
    \epsilon (1 + |j|)^{-\alpha} & \text{otherwise}.\end{cases}
\end{align}
Thus if $\alpha > t + 1/2$ then $g \in H^t(\mathbb U).$  For this problem we solve \eqref{eq:approx_SIE_P} and compute plot the $H^s(\mathbb U)$ error between $u_N$ and $U_{2000}$ as $N$ varies between $40$ and $400$.  We choose $\alpha = t + 0.51$, $ t = 1$, $\epsilon = 0.01$ and $s = 1/4$.  In Figure~\ref{f:rhp}, we see that our bound is realized.  But we do expect that the condition $t > 2s$ to be unnecessary in Theorem~\ref{t:rhp-1} with $t > s$ being the true requirement.  Note that Theorem~\ref{t:rhp-1} does not apply for $s =1/4$, but we also expect this to be an unnecessary requirement.

\begin{figure}[tbp]
\includegraphics[width=.8\linewidth]{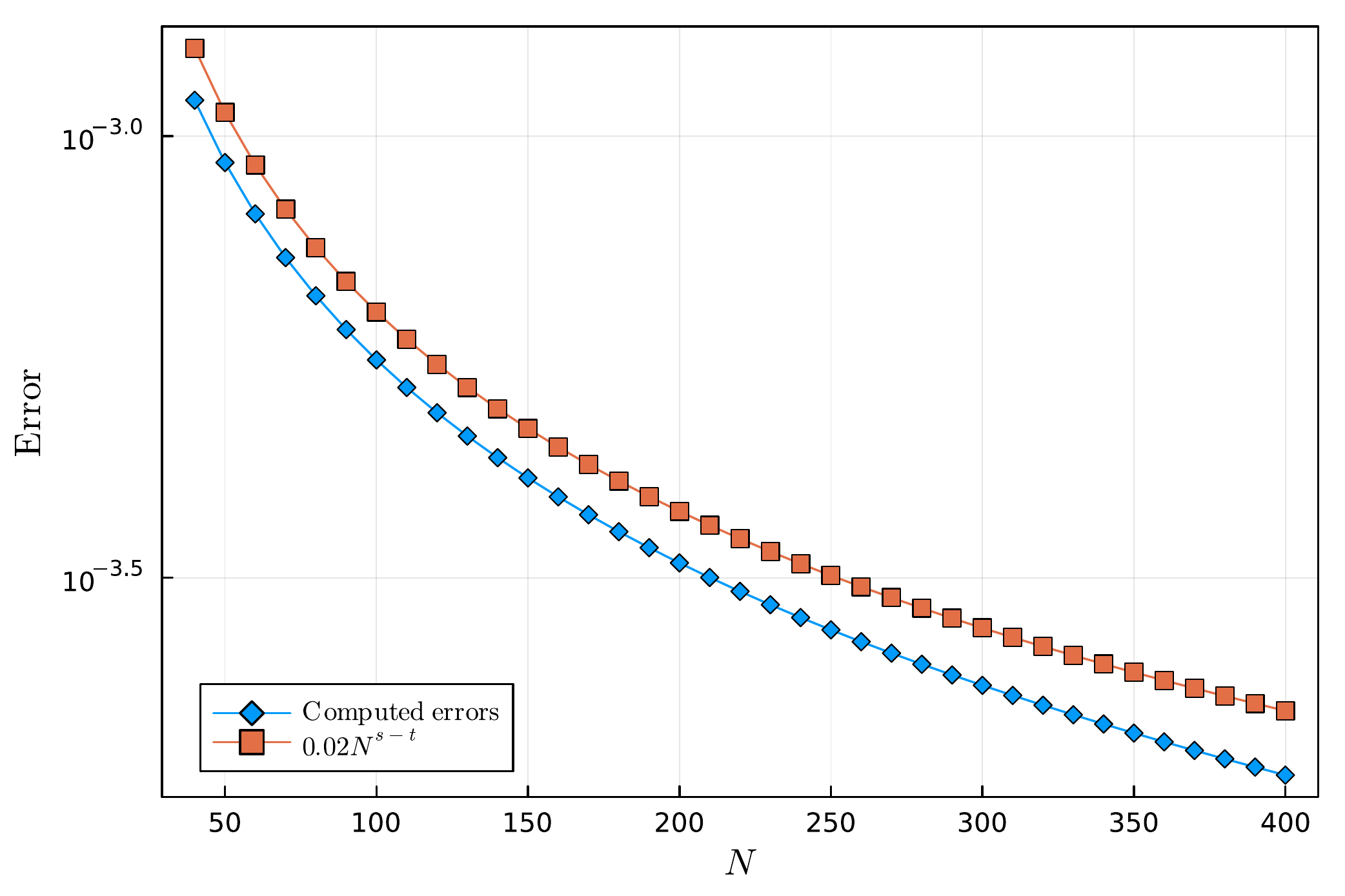}
\caption{\label{f:rhp}  The measured $H^s(\mathbb U)$ error in solving the Riemann--Hilbert problem on the circle with jump condition given by \eqref{eq:gg}.  The diamonds (blue) give the computed error by comparing against a solution computed using $N = 2000.$  The rate of convergence closely matches $N^{s-t}$ indicating that Theorem~\ref{t:rhp-1} is optimal with respect to the rate it predicts.}
\end{figure}

\section{Conclusion and outlook}

The main focus of the current work is to begin filling gaps in the literature so that one can prove convergence of widely-used spectral methods effectively and give rates of convergence.  Our results on differential operators improve that of \cite{Curtis} and \cite{Zumbrun} each in some respects.  

A number of open questions exist.  One problem is to obtain optimal rates in Theorem~\ref{t:spectrum-main} and to extend our analysis to differential operators where the leading coefficients are not constants.  One such approach in this direction would see the Riemann--Hilbert theory connect with the differential operator theory using the result of Gohberg-Feldman, Theorem~\ref{t:G-F}.  Similarly, as we have observed, Theorem~\ref{t:rhp-1} is suboptimal.

The main challenge is to apply this framework in settings that are not as structured -- when orthogonal polynomial bases are used instead of Fourier.  Such settings can be found in \cite{SORHFramework,TrogdonSOKdV,Bilman2022,Ballew2023}.

\bibliographystyle{abbrv}
\bibliography{library}

\begin{thebibliography}{10}

\bibitem{atkinson}
K.~Atkinson and W.~Han.
\newblock {\em {Theoretical Numerical Analysis}}.
\newblock Springer, New York, NY, 2009.

\bibitem{Ballew2023}
C.~Ballew and T.~Trogdon.
\newblock {A Riemann--Hilbert approach to computing the inverse spectral map
  for measures supported on disjoint intervals}.
\newblock {\em Studies in Applied Mathematics}, pages 1--33, 2023.

\bibitem{Ben-Artzi2015a}
J.~Ben-Artzi, M.~J. Colbrook, A.~C. Hansen, O.~Nevanlinna, and M.~Seidel.
\newblock {Computing Spectra -- On the Solvability Complexity Index Hierarchy
  and Towers of Algorithms}.
\newblock {\em arXiv preprint 1508.03280}, 2015.

\bibitem{Ben-Artzi2015}
J.~Ben-Artzi, A.~C. Hansen, O.~Nevanlinna, and M.~Seidel.
\newblock {New barriers in complexity theory: On the solvability complexity
  index and the towers of algorithms}.
\newblock {\em Comptes Rendus Mathematique}, 353(10):931--936, oct 2015.

\bibitem{Bilman2022}
D.~Bilman, P.~Nabelek, and T.~Trogdon.
\newblock {Computation of large-genus solutions of the Korteweg--de Vries
  equation}.
\newblock {\em Physica D: Nonlinear Phenomena}, 449:133715, jul 2023.

\bibitem{Bottcher1997}
A.~B{\"{o}}ttcher and Y.~I. Karlovich.
\newblock {\em {Carleson Curves, Muckenhoupt Weights, and Toeplitz Operators}}.
\newblock Birkh{\"{a}}user Basel, Basel, 1997.

\bibitem{Bottcher1999}
A.~B{\"{o}}ttcher and B.~Silbermann.
\newblock {\em {Introduction to large truncated Toeplitz matrices}}.
\newblock Springer Berlin / Heidelberg, Berlin, Heidelberg, 1999.

\bibitem{Chatelin2011}
F.~Chatelin.
\newblock {\em {Spectral Approximation of Linear Operators}}.
\newblock Society for Industrial and Applied Mathematics, jan 2011.

\bibitem{Colbrook2021}
M.~Colbrook, A.~Horning, and A.~Townsend.
\newblock {Computing Spectral Measures of Self-Adjoint Operators}.
\newblock {\em SIAM Review}, 63(3):489--524, jan 2021.

\bibitem{Colbrook2021a}
M.~J. Colbrook.
\newblock {Computing Spectral Measures and Spectral Types}.
\newblock {\em Communications in Mathematical Physics}, 384(1):433--501, may
  2021.

\bibitem{Colbrook2022}
M.~J. Colbrook and A.~C. Hansen.
\newblock {The foundations of spectral computations via the Solvability
  Complexity Index hierarchy}.
\newblock {\em Journal of the European Mathematical Society}, nov 2022.

\bibitem{Colbrook2019a}
M.~J. Colbrook, B.~Roman, and A.~C. Hansen.
\newblock {How to Compute Spectra with Error Control}.
\newblock {\em Physical Review Letters}, 122(25):250201, jun 2019.

\bibitem{Colbrook2019}
M.~J. Colbrook, B.~Roman, and A.~C. Hansen.
\newblock {How to Compute Spectra with Error Control}.
\newblock {\em Physical Review Letters}, 122(25):250201, jun 2019.

\bibitem{Curtis}
C.~W. Curtis and B.~Deconinck.
\newblock {On the convergence of {H}ill's method}.
\newblock {\em Math. Comp.}, 79(269):169--187, 2010.

\bibitem{hill}
B.~Deconinck and J.~N. Kutz.
\newblock {Computing spectra of linear operators using the
  {F}loquet--{F}ourier--{H}ill method}.
\newblock {\em J. of Comp. Phys.}, 291:296--321, 2007.

\bibitem{Duren}
P.~Duren.
\newblock {\em {Theory of $H^p$ Spaces}}.
\newblock Academic Press, 1970.

\bibitem{Hansen2008}
A.~Hansen.
\newblock {On the approximation of spectra of linear operators on Hilbert
  spaces}.
\newblock {\em Journal of Functional Analysis}, 254(8):2092--2126, apr 2008.

\bibitem{Hansen2010}
A.~Hansen.
\newblock {On the Solvability Complexity Index, the $n$-pseudospectrum and
  approximations of spectra of operators}.
\newblock {\em Journal of the American Mathematical Society}, 24(1):81--124,
  jul 2010.

\bibitem{Zumbrun}
M.~A. Johnson and K.~Zumbrun.
\newblock {Convergence of {H}ill's Method for Non-self adjoint Operators}.
\newblock {\em SIAM J. Numer. Anal.}, 50(1):64--78, 2012.

\bibitem{Krasnoselskii1972}
M.~A. Krasnosel'skii, G.~M. Vainikko, P.~P. Zabreiko, Y.~B. Rutitskii, and
  V.~Y. Stetsenko.
\newblock {\em {Approximate Solution of Operator Equations}}.
\newblock Springer Netherlands, Dordrecht, 1972.

\bibitem{Kress1995}
R.~Kress.
\newblock {On the numerical solution of a hypersingular integral equation in
  scattering theory}.
\newblock {\em Journal of Computational and Applied Mathematics},
  61(3):345--360, aug 1995.

\bibitem{Kress2014}
R.~Kress.
\newblock {\em {Linear Integral Equations}}, volume~82 of {\em Applied
  Mathematical Sciences}.
\newblock Springer New York, New York, NY, 2014.

\bibitem{Kress1993}
R.~Kress and I.~H. Sloan.
\newblock {On the numerical solution of a logarithmic integral equation of the
  first kind for the Helmholtz equation}.
\newblock {\em Numerische Mathematik}, 66(1):199--214, dec 1993.

\bibitem{leveque}
R.~LeVeque.
\newblock {\em {Finite Difference Methods for Ordinary and Partial Differential
  Equations}}.
\newblock SIAM, Philadelphia, PA, 2007.

\bibitem{SOHilbertTransform}
S.~Olver.
\newblock {Computing the Hilbert transform and its inverse}.
\newblock {\em Mathematics of Computation}, 80:1745--1767, 2011.

\bibitem{SORHFramework}
S.~Olver.
\newblock {A general framework for solving Riemann-Hilbert problems
  numerically}.
\newblock {\em Numer. Math.}, 122(2):305--340, 2012.

\bibitem{Olver2013}
S.~Olver and A.~Townsend.
\newblock {A Fast and Well-Conditioned Spectral Method}.
\newblock {\em SIAM Review}, 55(3):462--489, jan 2013.

\bibitem{prossdorf}
S.~Pr{\"{o}}ssdorf and B.~Silbermann.
\newblock {\em {Numerical Analysis for Integral and Related Operator
  Equations}}.
\newblock Birkh{\"{a}}user, 1991.

\bibitem{TrogdonSOBook}
T.~Trogdon and S.~Olver.
\newblock {\em {Riemann--Hilbert Problems, Their Numerical Solution and the
  Computation of Nonlinear Special Functions}}.
\newblock SIAM, Philadelphia, PA, 2016.

\bibitem{TrogdonSOKdV}
T.~Trogdon, S.~Olver, and B.~Deconinck.
\newblock {Numerical inverse scattering for the Korteweg--de Vries and modified
  Korteweg--de Vries equations}.
\newblock {\em Physica D}, 241(11):1003--1025, 2012.

\end{thebibliography}

\end{document}